\definecolor{colorJRred}{rgb}{1.,0.,0.}
\definecolor{colorJRblue}{rgb}{0.,0.,1.}
\definecolor{colorJRgreen}{rgb}{1.,0.,1.}
\newcommand{\bu}{\mathbf{u}}
\newcommand{\bue}{\overline{\mathbf{u}}_{\rm e}}
\newcommand{\ueq}{\overline{u}_{\rm e}}
\newcommand{\hu}{\widehat{u}}
\DeclareMathOperator{\sech}{sech}
\theoremstyle{remark}
\newtheorem{remark}{Remark}
\theoremstyle{hypothesis}
\newtheorem{hypothesis}{Assumption}
\newtheorem{theorem}{Theorem}
\newtheorem{lemma}[theorem]{Lemma}
\setlist[enumerate]{leftmargin=.5in}
\setlist[itemize]{leftmargin=.5in}
\crefname{hypothesis}{Hypothesis}{Hypotheses}
\title{Analysing transitions from a Turing instability to large periodic patterns in a reaction-diffusion system
}
\author{Christopher Brown\thanks{Department of Mathematics, University of Surrey, Guildford, GU2 7XH,
  UK.}
\and Gianne Derks$^*$\thanks{corresponding author: g.derks@surrey.ac.uk, g.l.a.derks@math.leidenuniv.nl}
\and Peter van Heijster{\thanks{Mathematical and Statistical Methods - Biometris, Wageningen University \& Research, 6708 PB, Wageningen, The Netherlands} }{\thanks{School of Mathematical Sciences, Queensland University
  of Technology, Brisbane, Queensland 4001, Australia.} }%
\and David J.B. Lloyd\footnotemark[1]}
\begin{document}

\maketitle

\begin{abstract}
Analytically tracking patterns emerging from a small amplitude Turing instability to large amplitude remains a challenge as no general theory exists. 
In this paper, we consider a three component reaction-diffusion system with one of its components singularly perturbed, this component is known as the fast variable. We develop an analytical theory describing the periodic patterns emerging from a Turing instability using geometric singular perturbation theory. 
We show analytically that after the initial Turing instability, spatially periodic patterns evolve into a small amplitude spike in the fast variable whose amplitude grows as one moves away from onset. 
This is followed by a secondary transition where the spike in the fast variable widens, its periodic pattern develops two sharp transitions between two flat states and the amplitudes of the other variables grow.
The final type of transition we uncover analytically is where the flat states of the fast variable develop structure in the periodic pattern. 
The analysis is illustrated and motivated by a numerical investigation.
We conclude with a preliminary numerical investigation where we uncover more complicated periodic patterns and snaking-like behaviour that are driven by the three transitions analysed in this paper. 
This paper provides a crucial step towards understanding how periodic patterns transition from a Turing instability to large amplitude.

\end{abstract}

\begin{keywords}
{Geometrical singular perturbation techniques, three-component reaction-diffusion system, near-equilibrium patterns, far-from-equilibrium patterns}
\end{keywords}

\begin{AMS}
{34D15, 
34E15, 
35K40,
35K57, 
37J46 
}
\end{AMS}

\section{Introduction}

The emergence of periodic patterns is one of the simplest examples of
pattern formation and is often found in physical and biological systems such as crime hotspots~\cite{Buttenschoen2020,Lloyd2013,Wang2018}, vegetation patches~\cite{olfa2020},
cell polarization~\cite{Verschueren2017}, and plant root hair growth~\cite{Avitabile2018} to name but a few. These periodic patterns can be combined to form more complex patterns in two-dimensions
with sharp interfaces at the transition between the patterns
like so-called grain boundaries~\cite{Ercolani2018,hoyle06,Lloyd2017,Scheel2014,Zhang2021}. These are often seen in nature, for instance in Rayleigh-Benard convection~\cite{Haragus2021,hoyle06}, gannets nesting~\cite{Subramanian2021}, graphene~\cite{Hirvonen2016}, and phyllotaxis~\cite{Pennybacker2015}.

For small amplitude periodic patterns that emerge from a Turing instability, a generic
theory exists (see for instance~\cite{hoyle06} for an overview and
references) 
which yields insights into the various
behaviours one can expect to observe in experiments. 
In contrast, no general theory exists for far from
equilibrium patterns. Singular perturbation theory \cite{F79,J95,K99} is
one of the few techniques that has yielded new insights into the
complex phenomenology of patterns far from onset; see, for instance, \cite{Doelman2019, Doelman2012, HEK} and references therein. 
Recently, there has been an increasing interest in linking the small
amplitude {\it homoclinic} patterns, that emerge near a Turing
instability, to {\it localized} patterns 
found near the singular limit away from onset through a mixture of
numerical investigations, return-map analysis and singular
perturbation
theory~\cite{Saadi2021b,Saadi2022,Saadi2021,Champneys2021,Verschureren2021}. The
aim of this paper is to investigate analytically spatially {\it periodic} patterns emerging from a Turing instability using geometric singular perturbation theory. 

In particular, we study this connection
for 
a three-component reaction-diffusion system where one of the
components has a much smaller diffusion coefficient than the other two
which gives a spatially singular perturbed system.  This particular
model originated as a phenomenological model of gas-discharge
dynamics~\cite{P98,P14,P97}, see also \cite{liehr2013dissipative} and
references therein. It can be written as
\begin{equation}
\begin{array}{rcrcl}
U_t &=& \varepsilon^2\nabla^2 U  &+& U - U^3 - (A V + B W + C),\\
\tau V_t &=& \nabla^2V &+& U - V,\\
\theta W_t &=& D^2\nabla^2W &+& U - W,
\end{array}
\label{e:2d_system}
\end{equation}
where $(U,V,W)=(U,V,W)(x,t)\in \mathbb{R}^3$, with
$(x,t)\in\mathbb{R}^n\times\mathbb{R}^+$, $\nabla^2$ is the standard
Laplacian operator in
$\mathbb{R}^n$, 
$A, B, C, D\in\mathbb{R}$ and $\varepsilon,\tau,\theta\in\mathbb{R}^+$.
Typically, $\varepsilon$ is taken to be the small parameter in the
system and the parameters $\tau,\theta$ are assumed to be at least order~$1$ with respect to the small parameter $\varepsilon$, that is, $\tau,\theta = \mathcal{O}(\varepsilon^{-\chi}),$ for some $\chi \geq 0$.  Provided $D>\varepsilon$, we can assume, without a loss of
generality, that $D>1$.
We will restrict to patterns in one spatial dimension, i.e., $n=1$, and we will
consider the existence of stationary periodic patterns when
the parameters $B$ and $C$ are small (order $\varepsilon$) and the
parameter $A$ ranging from small to order~$1$. 
In~\cite{P98,P14,P97} versions of this model were studied to show the existence of Turing instabilities leading to the emergence of small amplitude spatially periodic patterns.
In addition, 
various results on the existence
and stability of far from equilibrium localized states -- for $A$, $B$, and $C$ order
$\varepsilon$ (with $\varepsilon$ small) -- have been proved
\cite{CBDvHR15,CBvHIR19,DvHK09,R13,TvH21,H18,vHDK08,vHDKP10,vHS11,vHS14}, but
less is known about far from equilibrium periodic patterns. Indeed, the only analytical results for periodic patterns can be found in \cite{H18} which restricts to the case that $A$, $B$, and $C$ are of order $\varepsilon$. 
Moreover, a detailed systematic numerical continuation study of versions
of the reaction-diffusion system~(\ref{e:2d_system}) has only been
performed for interacting pulses and two-dimensional spots \cite{P02, NTU03,nishiura2003scattering,P98,P97}.

Our short
numerical exploration with $A$ varying shows that there exist many
periodic patterns where $U$ displays both rapid changes and more
gradual evolution, while $V$ and $W$ only change gradually, see Figures~\ref{fig_overview} and~\ref{fig_overview22}. In this paper, we will show that these patterns can be 
described by the singular perturbation
theory of Fenichel (see for instance \cite{F79,HEK,J95,K99} and references
therein). In addition, we are able to analytically describe the transition from small amplitude periodic patterns created in a Turing bifurcation (that occurs when $A$ is order 1), to those found near the singular limit. Hence, we significantly extend the existence results in \cite{H18}. 
A key contribution of this paper is the delicate analysis of the
slow-fast structure which induces a slow flow in the fast $U$ variable
as depicted in panels {\textcircled{\raisebox{-.9pt} {6}}}
-- {\textcircled{\raisebox{-.9pt} {8}}} in Figure~\ref{fig_overview}.\footnote{The fast variable is labelled $u$ in Figure~\ref{fig_overview} to indicate that we are simulating the time-independent version of (\ref{e:2d_system}), see also~(\ref{e:2d_system_ODE}).}

The paper is outlined as follows: in \cref{S:NUM} we start with a
numerical exploration of the periodic patterns of~\eqref{e:2d_system}
with $n=1$, $B$ and $C$ small and $A$ varying between
order $\varepsilon$ and order~$1$ to motivate the analysis. The
patterns observed display some fast transitions interspersing more
gradual behaviour. The major results of this paper, Theorems~\ref{th.one}-\ref{th.two_2}, related to the existence of three different types of numerically observed patterns are also given in \cref{S:NUM}.
Next, in \cref{S:SETUP}, we describe the two
spatial scales for the system (induced by
the singular nature of~\eqref{e:2d_system}), discuss the equilibria,
and introduce the slow-fast structures in the system. In this section, we also derive conditions for
the Turing
bifurcation from which near-equilibrium stationary periodic patterns
emerge, see Lemma~\ref{L:HH}. For this bifurcation to occur it is necessary that {\it not} all
three system parameters $A,B$ and $C$ are small. From this section
onwards, we focus predominantly on system parameter $A$, while we keep
the other parameters $B$ and $C$ small.  In \cref{S:SF}, we analyse the
far-from equilibria periodic patterns that have one slow-fast
transition (see, e.g., panels
\raisebox{.5pt}{\textcircled{\raisebox{-.9pt}
    {1}}} and \raisebox{.5pt}{\textcircled{\raisebox{-.9pt} {2}}} of
Figure~\ref{fig_overview}). 
Emerging from a Turing bifurcation, these patterns can only occur if $A$ is
order $1$ (since $B$ and $C$ are fixed at order~$\varepsilon$ values). That is, we prove Theorem~\ref{th.one}.
In \cref{S:SFS}, we analyse
the far-from equilibria periodic patterns that have two distinct
slow-fast transitions. Two types of patterns emerge. One, related to Theorem~\ref{th.two}, occurs
when all three system parameters $A,B$ and $C$ are small (see, e.g.,
panels \raisebox{.5pt}{\textcircled{\raisebox{-.9pt} {4}}} and
\raisebox{.5pt}{\textcircled{\raisebox{-.9pt} {5}}} of
Figure~\ref{fig_overview}) and is also studied in~\cite{H18}.
The other
pattern related to Theorem~\ref{th.two_2} is new and has not been studied yet. It involves the analysis of the slow evolution of the fast variable $U$; see, e.g., panels
\raisebox{.5pt}{\textcircled{\raisebox{-.9pt} {6}}} and
\raisebox{.5pt}{\textcircled{\raisebox{-.9pt} {7}}} of
Figure~\ref{fig_overview}.  We end the paper with a discussion and
outlook on further work.

\section{A short numerical exploration of \eqref{e:2d_system} and existence results}
\label{S:NUM}
As we focus on stationary one-dimensional periodic patterns
in~(\ref{e:2d_system}), we numerically investigate the time-independent
version of~(\ref{e:2d_system}) with $n=1$: 
\begin{equation}
\begin{array}{rcrcl}
0 &=& \varepsilon^2 u_{xx}  &+& u - u^3 - (A v + B w + C),\\
0 &=& v_{xx} &+& u - v,\\
0 &=& D^2 w_{xx} &+& u - w,
\end{array}
\label{e:2d_system_ODE}
\end{equation}
and periodic boundary
conditions, using the numerical continuation program AUTO-07P \cite{auto}.
More specifically, we continue in system parameter
$A$ with the other system
parameters fixed. 
We take first $B$ small and $C=0$ (and $D=3$, $\varepsilon=0.01$). 
We observe that a near-equilibrium pulse with small
amplitude and small width is born for $A$ of order $1$. 
In the next section we will show that this happens near $A \approx 2/3$ (indicated by the black dashed line in Figure~\ref{fig_overview}).
Upon
reducing $A$ the amplitude of the $u$ variable of this pulse grows to
approximately $2$,
however, its width stays small. The
$v$ and $w$ variables stay near-constant, see panels \raisebox{.5pt}{\textcircled{\raisebox{-.9pt}
    {1}}}-\raisebox{.5pt}{\textcircled{\raisebox{-.9pt}
    {3}}} of Figure~\ref{fig_overview}.  
The small width pulse can
be seen as slow-fast transition, hence this solution can be described
as a periodic solution with one slow-fast transition (in the $u$
variable). An analytical description of
this type of pulses and its existence interval will be derived in
section~\ref{S:SF}.

\begin{figure}[h!]
  \centerline{\includegraphics[width=1\textwidth]{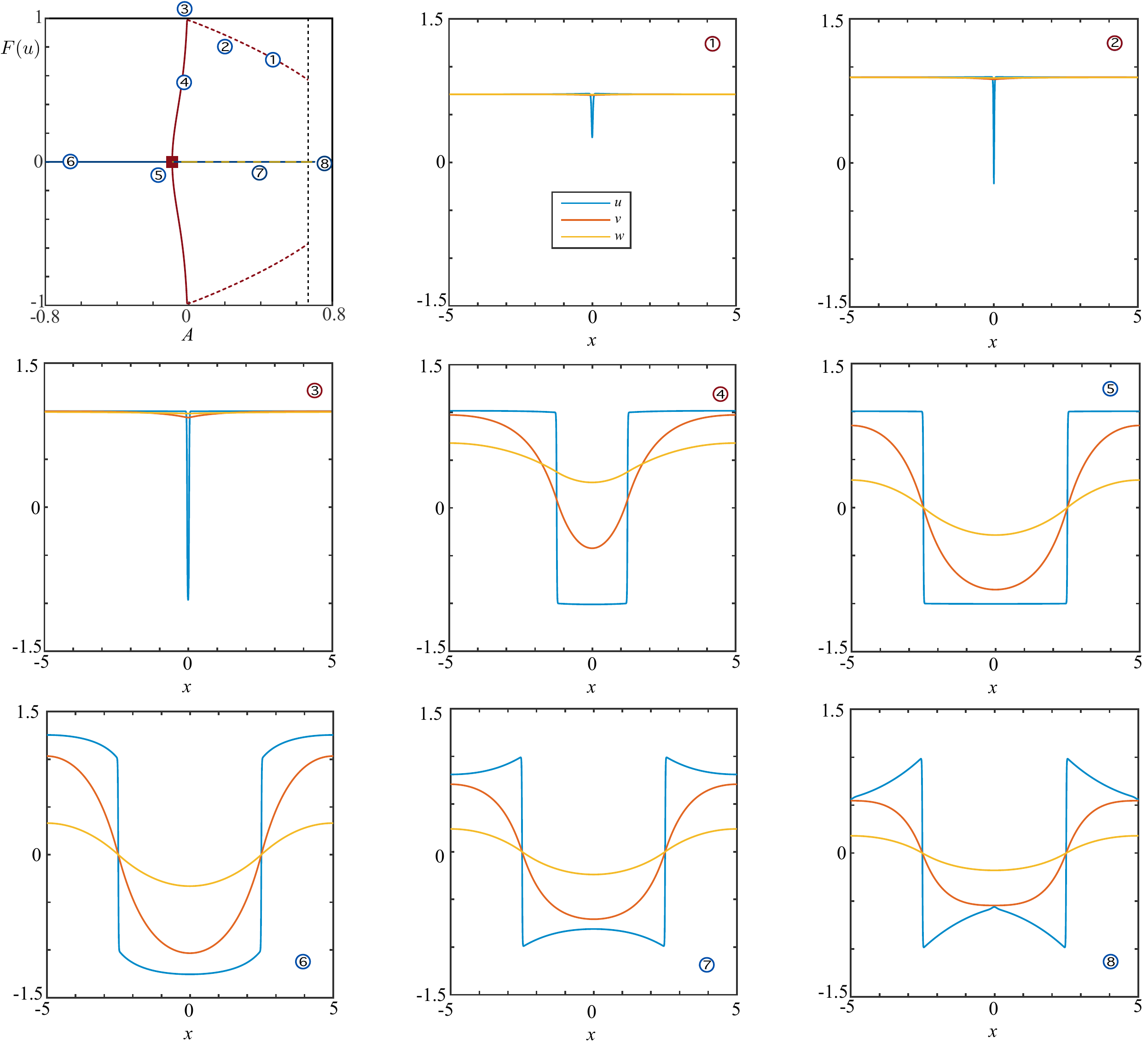}}
 \caption{
Numerical continuation in $A$ of 
\eqref{e:2d_system_ODE} on a domain of length $10$. The other
system parameters are kept fixed at $B=\varepsilon = 0.01$, $C=0$ and
$D=3$. The top left panel shows the obtained bifurcation diagram of
$A$ versus the integral of $u$ (its mass). The dashed black line indicates $A=2/3$. Panels \textnormal{\raisebox{.5pt}{\textcircled{\raisebox{-.9pt} {1}}}-\raisebox{.5pt}{\textcircled{\raisebox{-.9pt} {8}}}} show the associated profiles of the periodic patterns as indicated in the bifurcation diagram. }\label{fig_overview}
\end{figure}

Once $A$ is of order $\varepsilon$, the amplitude of the $u$-pulse stays
approximately the same upon further decreasing $A$, however, its
widths grows to order $1$ and the $u$-pulse transforms to a
front-back-like structure, see panels
\raisebox{.5pt}{\textcircled{\raisebox{-.9pt}
    {3}}}-\raisebox{.5pt}{\textcircled{\raisebox{-.9pt} {5}}} of
Figure~\ref{fig_overview}. The front-back structure corresponds to two
slow-fast transitions and this type of solution is studied in
section~\ref{S:SFS_Asmall}. As we have taken $C=0$ and the system thus has an
additional reflection symmetry, the pattern undergoes a pitchfork
bifurcation (shown in \raisebox{.5pt}{\textcircled{\raisebox{-.9pt}{5}}}) when the front-back structure has become symmetrically
spaced and $v=w=0$ at the fast transitions in $u$. When $C\neq 0$, but
small, the system loses this symmetry and the pitchfork bifurcation
becomes a saddle-node bifurcation, see Figure~\ref{fig_overview22}.

\begin{figure}[ht!]
  \centerline{\includegraphics[width=1\textwidth]{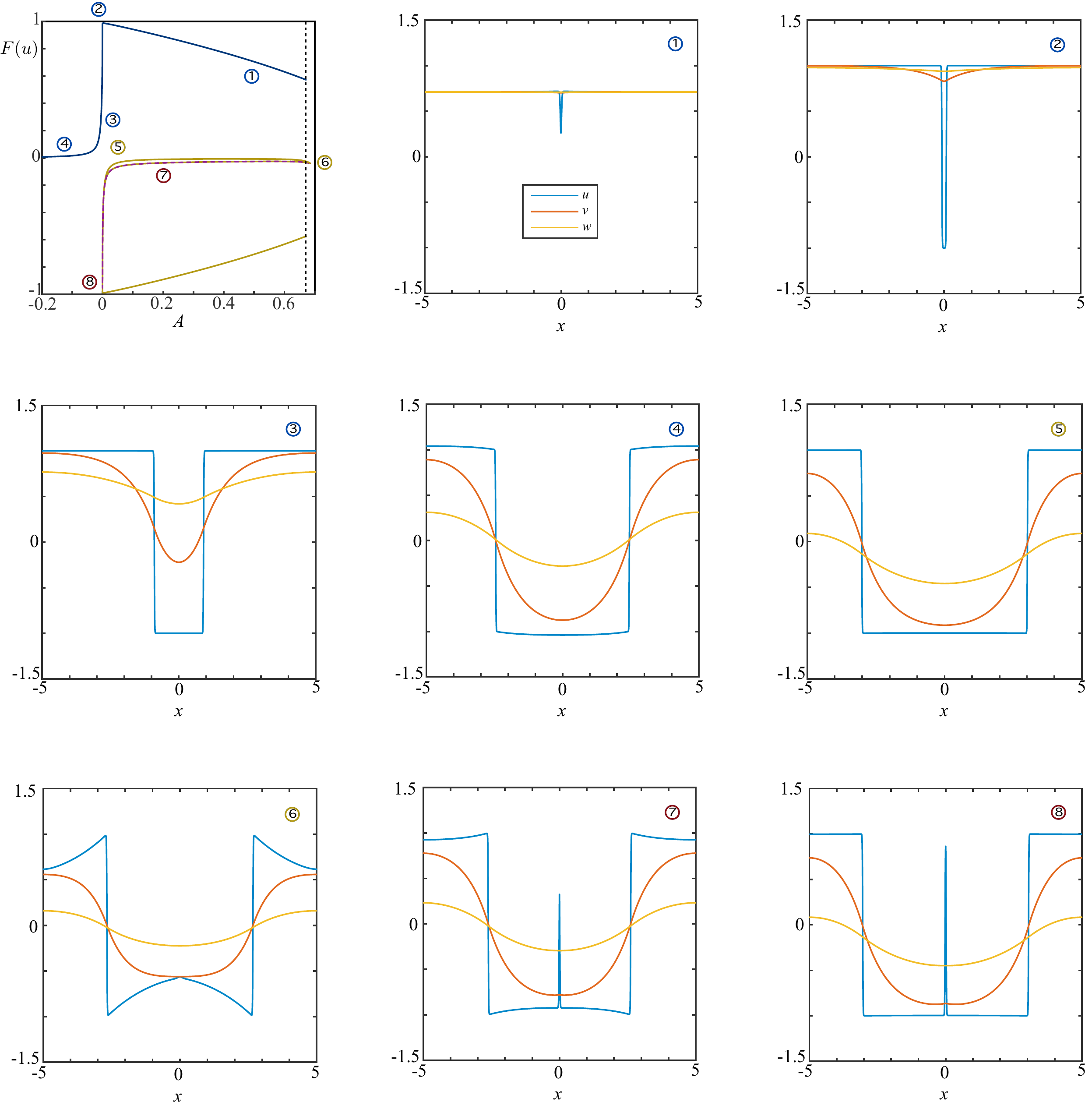}}
   \caption{
Numerical continuation in $A$ of 
\eqref{e:2d_system_ODE} on a domain of length $10$. The other
system parameters are kept fixed at $B=C=\varepsilon = 0.01$, and
$D=3$. The top left panel shows the obtained bifurcation diagram of
$A$ versus the integral of $u$ (its mass).
The dashed black line indicated $A=2/3$.
Panels \textnormal{\raisebox{.5pt}{\textcircled{\raisebox{-.9pt} {1}}}-\raisebox{.5pt}{\textcircled{\raisebox{-.9pt} {8}}}} show the associated profiles of the periodic patterns as indicates in the bifurcation diagram. }\label{fig_overview22}
\end{figure}   
Switching branches and changing $A$ further develops the symmetrically
spaced solution, this solution is studied in
section~\ref{S:SFS_Alarge}. For $A$ decreasing (i.e., $A$ negative),
this branch can be followed for a large range of $A$ values, see panel
\raisebox{.5pt}{\textcircled{\raisebox{-.9pt}
    {6}}} of
Figure~\ref{fig_overview}. For
increasing $A$ values (i.e., $A$ positive) back to
order $1$, the branch terminates (for an $A$ value larger than 2/3, see section~\ref{S:SFS_Alarge}) and we observe the creation of a new
near-equilibrium pulse with small amplitude and small width, that is,
we observe a pulse-splitting phenomena, see panels
\raisebox{.5pt}{\textcircled{\raisebox{-.9pt} {7}}} and \raisebox{.5pt}{\textcircled{\raisebox{-.9pt} {8}}} of
Figure~\ref{fig_overview} and, in particular, panels \raisebox{.5pt}{\textcircled{\raisebox{-.9pt} {6}}}-\raisebox{.5pt}{\textcircled{\raisebox{-.9pt} {8}}} of
Figure~\ref{fig_overview22}.

During the numerical exploration of (\ref{e:2d_system_ODE}) we continued in system parameter $A$, while assuming that the system parameter $B$ was small. Continuing in system parameter $B$, with $A$ small, results in similar bifurcation diagrams (results not shown). We hypothesise that this is (likely) due to the strong similarity in the structure of the linear slow components $V$ and $W$ in (\ref{e:2d_system}) (or $v$ and $w$ in (\ref{e:2d_system_ODE})), see also \cite{DvHK09,vHDK08,vHDKP10}. We do not further investigate this claim in this paper.

In the following sections, we will characterize these numerically observed patterns
analytically and determine their regions of
existence. We will not do this in the most generic case, instead we make the assumption that the system parameters $B$ and $C$ are small with respect to $\varepsilon$ (as we also did in the numerical exploration before).
\begin{hypothesis}\label{H1}
Assume $B, C$ of \eqref{e:2d_system} or equivalently \eqref{e:2d_system_ODE} are fixed and $\mathcal{O}(\varepsilon)$. In particular, assume that $B=\varepsilon B_1$ and $C=\varepsilon C_1$ with $B_1, C_1\in \mathbb{R}$ and $\mathcal{O}(1)$.
\end{hypothesis}
First we state the result about the existence of solutions as depicted in panels
\raisebox{.5pt}{\textcircled{\raisebox{-.9pt}
    {1}}} and \raisebox{.5pt}{\textcircled{\raisebox{-.9pt} {2}}} of
Figure~\ref{fig_overview} and panel
\raisebox{.5pt}{\textcircled{\raisebox{-.9pt}
    {1}}} of
Figure~\ref{fig_overview22}.
\begin{theorem}\label{th.one}
 Let Assumption~\ref{H1} hold 
 and $L>0$. Then there is
  some $\varepsilon_0>0$ such that for all $0<\varepsilon<\varepsilon_0$, the
  three-component reaction-diffusion system~\eqref{e:2d_system} has a stationary
  $2L$-periodic solution with one fast transition. The slow solutions are given by
  \[
    v_s(x) = \pm \sqrt{1-A} +\mathcal{O}(\varepsilon),\quad
    w_s(x)= \pm \sqrt{1-A} +\mathcal{O}(\varepsilon),\quad x\in [-L,L];
   \]
   and the fast solution
   \[
     u_f(\xi) = \pm u_h(\xi;A)+\mathcal{O}(\varepsilon)\,,
     \quad \xi = \dfrac{x}{\varepsilon} \in \left[-\frac{L}{\varepsilon},\frac{L}{\varepsilon}\right]\,.
   \]
   Here, $u_h(\xi;\sqrt{1-A})$ is the solution homoclinic to $\sqrt{1-A}$ in
   $u_{\xi\xi}+u-u^3-A\sqrt{1-A}=0$. 
\end{theorem}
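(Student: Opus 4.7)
\emph{Proof plan.} The plan is to apply Fenichel's geometric singular perturbation theory to the stationary system \eqref{e:2d_system_ODE} and then upgrade the resulting singular pattern to an exact $2L$-periodic orbit by a fixed-point argument. First I would rewrite \eqref{e:2d_system_ODE} as a six-dimensional first-order ODE in $(u,p,v,q,w,r)$ with $p=\varepsilon u_x$, $q=v_x$, $r=w_x$, so that $(u,p)$ are fast and $(v,q,w,r)$ are slow. Under Assumption~\ref{H1} the critical manifold $\mathcal{M}_0=\{p=0,\ u^3-u+Av=0\}$ has three branches; the two outer branches $\mathcal{M}_0^\pm$ (where $|u|>1/\sqrt{3}$) are normally hyperbolic saddles with fast eigenvalues $\pm\sqrt{3u^2-1}$, so Fenichel's theorem produces persistent locally invariant slow manifolds $\mathcal{M}_\varepsilon^\pm$ given as graphs $u=h^\pm(v,w;\varepsilon)=h_0^\pm(v)+\mathcal{O}(\varepsilon)$.

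I would then analyse the reduced slow flow $v_{xx}=v-h^+$, $D^2 w_{xx}=w-h^+$ on $\mathcal{M}_\varepsilon^+$. Its equilibrium $P_\varepsilon$ satisfies $v=w=h^+(v,w;\varepsilon)$, so at leading order $v_{\mathrm{eq}}=w_{\mathrm{eq}}=\sqrt{1-A}$. A direct Jacobian computation (using $\partial_v h_0^+=A/(3A-2)<0$ and $\partial_w h_0^+=0$ at leading order) shows that $P_\varepsilon$ is a hyperbolic saddle of the slow dynamics with two-dimensional stable and two-dimensional unstable manifolds. Freezing the slow variables at $P_\varepsilon$ and passing to the fast scale $\xi=x/\varepsilon$, the fast-layer equation reduces at leading order to $u_{\xi\xi}+u-u^3-A\sqrt{1-A}=0$, the Hamiltonian scalar problem of the theorem; its saddle $\sqrt{1-A}$ admits the homoclinic orbit $u_h(\xi;A)$ precisely because $3(1-A)>1$.

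To assemble the $2L$-periodic solution I would define the approximate profile $\bu_{\mathrm{app}}(x):=\bigl(u_h(x/\varepsilon;A),\sqrt{1-A},\sqrt{1-A}\bigr)$, extended $2L$-periodically in $x$. Because $u_h$ converges to $\sqrt{1-A}$ at an exponential rate in $|\xi|$, substituting $\bu_{\mathrm{app}}$ into \eqref{e:2d_system_ODE} produces a residual of size $\mathcal{O}(\varepsilon)+\mathcal{O}(e^{-cL/\varepsilon})$ in a suitable periodic Sobolev norm. I would then apply the implicit function theorem in $H^2_{\mathrm{per}}([-L,L])$, after breaking the translation symmetry by a phase condition such as $u_\xi(0)=0$, to obtain a nearby exact solution; the opposite-sign branch follows from the $(u,v,w)\mapsto-(u,v,w)$ symmetry when $C=0$, while the full dependence on $B_1,C_1$ is absorbed in the $\mathcal{O}(\varepsilon)$ correction.

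The principal obstacle will be invertibility of the linearised operator $\mathcal{L}_\varepsilon$ about $\bu_{\mathrm{app}}$. On the whole line this operator carries a translational kernel spanned by $\partial_\xi u_h$, and one must rule out additional small eigenvalues arising from neutral slow directions. This is exactly where the full hyperbolicity of $P_\varepsilon$ (the fast eigenvalues $\pm\sqrt{3(1-A)-1}$ together with the four real nonzero slow eigenvalues produced by the saddle computation above) is used: it places the essential spectrum of $\mathcal{L}_\varepsilon$ away from zero, and on the bounded periodic domain the single translational mode is also lifted by a gap exponentially small in $L/\varepsilon$. After imposing the phase condition this yields a uniformly bounded inverse, so the contraction closes and produces the claimed $\mathcal{O}(\varepsilon)$ approximation. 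Equivalently, the same construction can be packaged as a transversal intersection of the fast stable and unstable fibres of $P_\varepsilon$ in the full six-dimensional phase space via the Exchange Lemma, which is presumably the route taken in Section~\ref{S:SF}.
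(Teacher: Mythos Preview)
Your plan is a valid analytic route (approximate profile + IFT/contraction in $H^2_{\mathrm{per}}$), but it is not the argument of Section~\ref{S:SF} and it misses the simplification the paper exploits. After the heuristic matching (which forces the slow variables to sit at the full equilibrium $\pm\sqrt{1-A_0}\,(1,0,1,0,1,0)$ and incidentally pins down the hidden hypothesis $0<A_0<2/3$), the paper establishes persistence of the homoclinic \emph{geometrically}, using the reversibility $(u,p,v,q,w,r)(\xi)\mapsto(u,-p,v,-q,w,-r)(-\xi)$ of the Hamiltonian system~\eqref{e:6fast}. A dimension count shows that the three-dimensional stable manifold $\mathcal{W}_s(\bue^\varepsilon)$ of the persisting equilibrium must meet the three-dimensional fixed-point hyperplane $\mathcal{P}=\{p=q=r=0\}$ in the six-dimensional phase space; reversibility then gives for free that this intersection point also lies on $\mathcal{W}_u(\bue^\varepsilon)$, so the homoclinic persists without any spectral computation, Melnikov integral, or Exchange Lemma. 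Fenichel theory then closes the argument for the $2L$-periodic orbit.

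Your approach would work but is heavier, and one detail needs adjusting: the residual of $\bu_{\mathrm{app}}$ in the $v$- and $w$-equations is $u_h(x/\varepsilon)-\sqrt{1-A}$, which is $\mathcal{O}(1)$ pointwise on a layer of width $\mathcal{O}(\varepsilon)$, hence only $\mathcal{O}(\sqrt\varepsilon)$ in $L^2$, not $\mathcal{O}(\varepsilon)$. This is not fatal---after inverting $\partial_{xx}-1$ the slow corrections are indeed $\mathcal{O}(\varepsilon)$ in $L^\infty$, consistent with the theorem and with Appendix~\ref{A:HOT}---but it means you must either track norms more carefully or upgrade the ansatz with the first-order slow correction before running the contraction. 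The reversibility argument bypasses all of this and is the natural tool here given the Hamiltonian structure~\eqref{eq.ham}.
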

Next we state the existence result for solutions as depicted in the panels
\raisebox{.5pt}{\textcircled{\raisebox{-.9pt}
    {4}}} and \raisebox{.5pt}{\textcircled{\raisebox{-.9pt} {5}}} of
Figure~\ref{fig_overview} and panel
\raisebox{.5pt}{\textcircled{\raisebox{-.9pt}
    {3}}} of
Figure~\ref{fig_overview22}.
\begin{theorem}\label{th.two}
Let Assumption~\ref{H1} hold and let $A = \varepsilon A_1, $ with $A_1 \in \mathbb{R}$ and $\mathcal{O}(1)$, $D>1$  
 and $L>0$
 be such 
 that the Melnikov condition 
 \begin{equation}\label{e:jump}
  M(L-2x^{**}) +C_1 = 0,
  \quad 0<x^{**}<L;\quad\text{ where }
  M(z) = A_1\,\frac{\sinh(z)}{\sinh(L)} +
B_1\,\frac{\sinh(z/D)}{\sinh(L/D)}
\end{equation}
 has $N \in \{1, 2, 3\}$ solutions $x^{**}_i, i=1, \ldots, N$.
 Then there is
  some $\varepsilon_0>0$ such that for all $0<\varepsilon<\varepsilon_0$, the
  three-component reaction-diffusion system~\eqref{e:2d_system}  has $N$ stationary
  $2L$-periodic solutions with two fast transitions. The slow solutions are in lowest order given by \eqref{e:I1}--\eqref{e:I2} (with $x^{**} = x^{**}_i$) in Appendix~\ref{s:slow_approx_params_small} and the fast solutions are in lowest order given by
\begin{equation}\label{FAST_new}
u_{f}^\pm(x) =1 + \tanh\left(\frac{x-x^{**}}{\sqrt{2} \varepsilon} \right) - \tanh\left(\frac{x+x^{**}}{ \sqrt{2}\varepsilon} \right) + \mathcal{O}(\varepsilon)\,, \quad x \in [-L,L].
\end{equation}  
 The co-periodic stability, related to perturbations with the same period, is given by 
\begin{equation}\label{e:stab_per}
\left.\frac{dM(z)}{dz}\right|_{z=L-2x^{**}} < 0.
\end{equation}
   \end{theorem}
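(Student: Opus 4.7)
The plan is to invoke geometric singular perturbation theory in the Fenichel--Melnikov style used in the earlier works \cite{DvHK09,vHDKP10,H18}. Under Assumption~\ref{H1} together with $A=\varepsilon A_1$, all three slow parameters are $\mathcal{O}(\varepsilon)$, so the critical manifold obtained from setting $\varepsilon=0$ in the first equation of \eqref{e:2d_system_ODE} is simply $u-u^3=0$, whose outer branches $u=\pm 1$ are normally hyperbolic of saddle type (fast linearisation $\lambda^2=2$) and will carry the construction; the middle branch $u=0$ is not hyperbolic in the fast direction and is discarded.

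First I would build the singular skeleton of a two-front $2L$-periodic orbit as the concatenation of a slow segment on $u=+1$, a fast heteroclinic jump at $x=x^{**}$, a slow segment on $u=-1$ for $|x|<x^{**}$, and a fast heteroclinic jump back at $x=-x^{**}$. On each slow segment the slow subsystem decouples into two linear inhomogeneous ODEs, $v_{xx}-v=\mp 1$ and $D^2 w_{xx}-w=\mp 1$, whose general solutions are $\pm 1$ plus $\cosh/\sinh$ terms in the arguments $x$ and $x/D$ respectively. Imposing $2L$-periodicity, the reflection symmetries $v(-x)=v(x)$, $w(-x)=w(x)$, and $C^1$-matching of $v$ and $w$ at $\pm x^{**}$ (the slow variables are smooth through a fast jump at leading order) determines the eight integration constants explicitly; a short computation using $2\sinh(L-x^{**})\cosh(x^{**})=\sinh L+\sinh(L-2x^{**})$ yields the key values
\begin{equation*}
v(x^{**})=\frac{\sinh(L-2x^{**})}{\sinh L},\qquad w(x^{**})=\frac{\sinh((L-2x^{**})/D)}{\sinh(L/D)},
\end{equation*}
together with the full slow profiles recorded in \eqref{e:I1}--\eqref{e:I2}.

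Second comes the jump condition, which is the heart of the proof. For the fast layer problem $u_{\xi\xi}+u-u^3=\varepsilon(A_1 v+B_1 w+C_1)+\mathcal{O}(\varepsilon^2)$ with $(v,w)$ frozen at their slow values at $x=x^{**}$, equating the unperturbed Hamiltonian $H=u_\xi^2/2+u^2/2-u^4/4-\varepsilon(A_1 v+B_1 w+C_1)u$ at the two endpoints $u=\pm 1$ of the heteroclinic $u_0(\xi)=\tanh(\xi/\sqrt{2})$ yields the leading-order Melnikov condition $A_1 v(x^{**})+B_1 w(x^{**})+C_1=0$. Substituting the slow values found above produces exactly the transcendental equation $M(L-2x^{**})+C_1=0$ of \eqref{e:jump}. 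Each simple root $x^{**}_i$ is then promoted, via the implicit function theorem combined with Fenichel's theorem (providing $\mathcal{O}(\varepsilon)$-perturbed slow manifolds with their stable/unstable foliations) and a standard exchange-lemma argument for the passage through each fast layer, to a genuine $\varepsilon>0$ solution of \eqref{e:2d_system_ODE} of the claimed form \eqref{FAST_new}. The bound $N\le 3$ follows by inspection: since $D>1$, the derivative $M'(z)=A_1\cosh(z)/\sinh L+B_1\cosh(z/D)/(D\sinh(L/D))$ has at most two zeros in $(-L,L)$, so $M+C_1$ has at most three zeros there.

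Finally, for the co-periodic stability \eqref{e:stab_per} I would linearise the PDE \eqref{e:2d_system} about the constructed profile and perform the slow-fast spectral reduction of \cite{DvHK09}: the critical eigenvalues correspond to translations of the two fronts and, at leading order, are captured by a scalar reduced equation whose coefficient is, up to a positive prefactor, $-\,dM/dz|_{z=L-2x^{**}}$; the critical eigenvalue is therefore negative precisely when \eqref{e:stab_per} holds. The main obstacle, in my view, is to make the formal Melnikov jump argument rigorous through two successive fast transitions simultaneously --- that is, to track the stable and unstable fibres of the perturbed Fenichel manifolds past both heteroclinic jumps and verify that the two associated transversality conditions collapse to the single scalar equation \eqref{e:jump} by virtue of the reflection symmetry. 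This is precisely the part of the argument that closely parallels, and can largely be adapted from, the analysis in \cite{H18}.
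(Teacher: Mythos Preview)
Your proposal is correct and follows essentially the same route as the paper: identify the normally hyperbolic slow manifolds $u=\pm 1$, solve the decoupled linear slow subsystem to obtain the profiles in Appendix~\ref{s:slow_approx_params_small} and the jump values $v(x^{**})$, $w(x^{**})$, derive the Melnikov condition~\eqref{e:jump} from conservation of the fast Hamiltonian across the layer, bound $N\le 3$ via the structure of $M'$, and defer persistence and co-periodic stability to the GSPT machinery of \cite{DvHK09,H18}. The only cosmetic difference is that for persistence the paper phrases the argument through the reversibility symmetry (tracking the forward and backward flow of the hyperplane $\{p=q=r=0\}$ and adapting the tube construction of \cite{DvHK09,doelman1997pattern}) rather than invoking an exchange lemma, but the two formulations are interchangeable here.
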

As part of the proof it will be shown that the Melnikov condition \eqref{e:jump} has 0, 1, 2, or 3 solutions.

Finally we state the existence results for the periodic solutions as depicted in the panels
\raisebox{.5pt}{\textcircled{\raisebox{-.9pt}
    {6}}} and \raisebox{.5pt}{\textcircled{\raisebox{-.9pt} {7}}} of
Figure~\ref{fig_overview} and panel
\raisebox{.5pt}{\textcircled{\raisebox{-.9pt}
    {4}}} of
Figure~\ref{fig_overview22}.
 \begin{theorem}\label{th.two_2}
Let Assumption~\ref{H1} hold  and let $A = A_0 + \varepsilon A_1, $ with $A_0 \neq 0$, $D>1$ 
 and $L>0$.
If $A_0 < 2/3$  or 
    \begin{align} \label{LMAX_new}
 A_0 \geq 2/3 \quad\mbox{and}\quad L< L_{\rm max}(A_0) :=  
      6\log\left(\frac{\sqrt6+\sqrt{9A_0-2}}{\sqrt2+\sqrt{9A_0-6}}\right),
    \end{align}
then there is
  some $\varepsilon_0>0$ such that for all $0<\varepsilon<\varepsilon_0$, the
  three-component reaction-diffusion system~\eqref{e:2d_system} has a stationary
  $2L$-periodic solution with two fast transitions at $\pm L/2$. 
  
  The fast solution is in lowest order given by
  $$
  u_{f}^\pm(x) = \mp \tanh\left(\frac{x\pm L/2}{\sqrt{2} \varepsilon} \right) + \mathcal{O}(\varepsilon)\
  $$
  near $x=\mp L/2$
and, away from $x=\mp L/2$, by the solutions of
\begin{equation}\label{eq.uq}
  u_x  = - \frac{A_0q}{3u^2-1}, \quad
  q_x = \frac{(1-A_0)u-u^3}{A_0} ;\quad |u| >  \frac1{\sqrt3},
\end{equation}
with the boundary conditions $u \to \pm 1$ for $x \to -L/2_\mp$ (where the subscript $\pm$ denotes the left and right limit) and $u \to \pm 1$ for $x \to L/2_\pm$.
  
The slow $v$ solution is in lowest order given by the solution to \eqref{eq.uq} and a bijective relation $u=u_0^{\pm}(v)$ between
$\left\{v \mid \pm A_0v \leq 2/(3\sqrt3) \right\}$ and
$\left\{ u \mid \pm u \geq 1/\sqrt3\right\}$ (implicitly given by $u_0^{\pm} - (u_0^{\pm})^3 = A_0 v$ with $A_0 v < 2/(3\sqrt3 )$ for $u_0^+$ and $A_0 v > -2/(3\sqrt3 )$ for $u_0^-$).  The slow $w$ solution is in lowest order given by 
\begin{align}
\label{slowTH2}
\begin{aligned}
  w(x) &= & \frac{\cosh\left(x/D\right)}{D \cosh\left(L/(2D)\right)}
  \int_0^{\frac L2} \sinh\left(\frac{L-2\xi}{2D}\right)\, u(\xi) \,
  d\xi - \frac 1D  \int_0^{x} \sinh\left(\frac{x-\xi}D\right)\, u(\xi) \,d\xi + \mathcal{O}(\varepsilon)\,, \\
   r(x) &= & \frac{\sinh\left(x/D\right)}{D \cosh\left(L/(2D)\right)}
  \int_0^{\frac L2} \sinh\left(\frac{L-2\xi}{2D}\right)\, u(\xi) \,
  d\xi - \frac 1D  \int_0^{x} \cosh\left(\frac{x-\xi}D\right)\, u(\xi) \,d\xi + \mathcal{O}(\varepsilon)\,,
  \end{aligned}
\end{align}
for $x \in
\left(- L/2, L/2\right)$ and
with $u$ the solution of \eqref{eq.uq} in the same interval.
On the other two intervals, $w$ and $r$ are to leading order given by symmetry
\begin{align}
\label{slowTH22}
\begin{aligned}
  w(x)&=-w\left(x-L\right),   \quad r(x)=-r\left(x-L\right),
  \quad \mbox{for} \quad x\in  \left(\frac L2, L\right),  \\
  w(x)&=-w\left(x+L\right),   \quad r(x)=-r\left(x+ L\right), \quad
  \mbox{for} \quad x\in  \left(-L, -\frac L2\right).  
  \end{aligned}
\end{align}
\end{theorem}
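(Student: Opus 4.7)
The plan is to construct a leading-order (singular) $2L$-periodic orbit by geometric singular perturbation theory, and to persist it to $0<\varepsilon<\varepsilon_0$ via Fenichel's theorem combined with an implicit function theorem / Melnikov matching at the fast jumps. Rewriting \eqref{e:2d_system_ODE} as a first-order system with fast variable $u$ and slow variables $(v,q,w,r)=(v,v_x,w,w_x)$, and using Assumption~\ref{H1} together with $A=A_0+\varepsilon A_1$, the critical manifold is $M_0=\{u-u^3=A_0 v\}$, normally hyperbolic along the outer sheets $u=u_0^\pm(v)$ with $|u|\geq 1/\sqrt{3}$ and folds at $u=\pm 1/\sqrt{3}$. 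The reduced slow flow on an outer sheet is $v_x=q$, $q_x=v-u_0^\pm(v)$, and the substitution $v=(u-u^3)/A_0$ with one differentiation in $x$ turns this into exactly \eqref{eq.uq}. The $w$-equation decouples at leading order because $B=\varepsilon B_1$; it becomes the linear inhomogeneous BVP $D^2 w_{xx}-w=-u$ on one period, which is why its solution will be expressible by a Green's function. The fast layer problem $u_{\xi\xi}+u-u^3-A_0 v=0$ at frozen slow variables admits, for $v=0$, the symmetric heteroclinic $u(\xi)=\mp\tanh(\xi/\sqrt{2})$: this is the fast profile asserted near $x=\mp L/2$.

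I would then assemble the singular orbit by seeking an even-in-$x$ solution with jumps at $x=\pm L/2$. The matching condition for the symmetric heteroclinic is $v(\pm L/2)=0$, and the reflection symmetry forces $q(0)=0$. Writing the slow $(v,q)$ flow as a Hamiltonian system $H=q^2/2+V(v)$ with $V'(v)=u_0^+(v)-v$, a trajectory through $(0,q_0)$ turning at $(v_0,0)$ (with $V(v_0)=q_0^2/2$) closes in slow time
\begin{equation*}
  L(v_0)=2\int_0^{v_0}\frac{dv}{\sqrt{2(V(v_0)-V(v))}}.
\end{equation*}
The zeros of $V'$ on the upper branch are the fold $v_{\max}=2/(3\sqrt{3}\,A_0)$ and, when $A_0<2/3$, the saddle $v^*=\sqrt{1-A_0}<v_{\max}$. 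For $A_0<2/3$, $v_0$ ranges over $(0,v^*)$ and $L(v_0)$ sweeps $(0,\infty)$ (diverging logarithmically as $v_0\uparrow v^*$ along the saddle's separatrix), so any $L>0$ is admissible. For $A_0\geq 2/3$ no saddle blocks the orbit, $V$ is monotone on $(-\infty,v_{\max}]$, and $v_0\leq v_{\max}$ caps $L$ at $L_{\max}(A_0)=L(v_{\max})$. To evaluate $L_{\max}$ in closed form I would exploit the conserved quantity of \eqref{eq.uq}, $H(u,q)=q^2-P(u)/A_0^2$ with $P(u)=(1-A_0)u^2-(4-3A_0)u^4/2+u^6$, together with the factorisation
\begin{equation*}
  P(u)-P(1/\sqrt{3})=(u^2-\tfrac{1}{3})^2\bigl(u^2+(9A_0-8)/6\bigr),
\end{equation*}
which holds because $u=1/\sqrt{3}$ is a double root of the left-hand side. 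On the critical trajectory this gives $A_0 q=(u^2-\tfrac{1}{3})\sqrt{u^2+(9A_0-8)/6}$, so the factor $3u^2-1$ in $\tfrac{1}{2}L_{\max}=\int_{1/\sqrt{3}}^{1}(3u^2-1)/(A_0 q)\,du$ cancels and reduces the integral to $3\int_{1/\sqrt{3}}^{1}du/\sqrt{u^2+(9A_0-8)/6}$, evaluating directly to \eqref{LMAX_new}.

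Fenichel's theorem then persists the outer sheets as locally invariant slow manifolds $M_\varepsilon^\pm$ on any compact set bounded away from the fold, with slow flow an $O(\varepsilon)$-perturbation of \eqref{eq.uq}. The two fast heteroclinics persist as transversal intersections $W^u(M_\varepsilon^\mp)\cap W^s(M_\varepsilon^\pm)$, and a Melnikov / implicit function theorem argument locates the perturbed jump points at $x=\pm L/2+O(\varepsilon)$ and closes the orbit; the leading-order relation $u=u_0^\pm(v)$ provides the $v$-profile, while the linear $w$-equation is solved by the periodic Green's function of $D^2\partial_x^2-1$, producing \eqref{slowTH2}--\eqref{slowTH22} (with $r=w_x$). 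The main obstacle I anticipate is the borderline regime $A_0\geq 2/3$ with $L\uparrow L_{\max}(A_0)$: as $v_0\to v_{\max}$ the slow arc limits to the fold, where normal hyperbolicity degenerates and \eqref{eq.uq} becomes singular. I would sidestep this by fixing any $\delta>0$ and restricting to $L\leq L_{\max}(A_0)-\delta$, which keeps the singular orbit in a uniformly normally hyperbolic tubular neighbourhood and makes the persistence argument routine with $\varepsilon_0=\varepsilon_0(\delta)$; a full treatment of the fold limit would require a separate blow-up analysis that I would not pursue here.
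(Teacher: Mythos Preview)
Your proposal is correct and follows essentially the same route as the paper: you reduce to the same $(u,q)$ Hamiltonian system on the outer sheets, obtain the identical factorisation $P(u)-P(1/\sqrt3)=(u^2-\tfrac13)^2(u^2+(9A_0-8)/6)$ that the paper writes as $E^*-V(u)=(3u^2-1)^2(6u^2+9A_0-8)/108$, and evaluate $L_{\max}$ by the same elementary integral; the persistence step is likewise deferred to Fenichel/Melnikov in both. Two cosmetic remarks: in this paper $r=Dw_x$ rather than $r=w_x$, and where you impose the even-in-$x$ ansatz with jumps at $\pm L/2$ the paper instead \emph{derives} $x^*=-x^{**}=-L/2$ from the order-two rotational symmetry of \eqref{eq.uq}, but this leads to the same singular skeleton.
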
   
Before we prove these theorems, first we will make explicit the slow-fast structure of the model 
and analyse its basic properties in the next section.
\begin{remark} \label{XX}
For the scaling of Theorem~\ref{th.two} various results on the existence and
stability of localized states have been proved
\cite{CBDvHR15,CBvHIR19,DvHK09,NTU03,R13,TvH21,H18,vHDK08,vHDKP10,vHS11,vHS14}.
Less is known for periodic patterns, although in~\cite{H18} an action
functional approach was used to determine criteria for existence and stability 
of stationary $2L$-periodic solutions with two fast transitions, i.e., Theorem~\ref{th.two} was derived and proved. For completeness of the current paper, we also derive this
existence condition with our methodology. Subsequently, we go beyond~\cite{H18} and further
investigate this condition in the context of the transitions between
patterns. Note that in the aforementioned works a slightly different notation is used 
\[\alpha  \leftrightarrow A_1\,, \qquad \beta \leftrightarrow B_1\,, \qquad \gamma \leftrightarrow C_1,
\]
and the periodic solutions constructed in \cite{H18} are, compared to the periodic solutions constructed here, {\emph{mirrored}} in the $x$-axis, see the right panel of Figure~\ref{f:periodic_setup}.  
\end{remark}

\section{Equilibria, the Turing bifurcation, and the slow-fast structure}
\label{S:SETUP}
\subsection{Singular limit set-up}
We write the time-independent system~(\ref{e:2d_system_ODE})
as a first-order system of ordinary differential equations (ODEs) by
introducing $p:= \varepsilon u_x , q:= v_x, r:=D w_x$. This gives the system
\begin{equation}
\begin{array}{rcl}
\varepsilon u_x &=& p,\\
\varepsilon p_x &=& -u + u^3 + (A_0+\varepsilon A_1) v+(B_0+\varepsilon B_1) w
                 +C_0+\varepsilon C_1 ,\\
v_x &=& q,\\
q_x &=&v - u, \\
w_x &=& \dfrac{r}{D},\\[3mm]
r_x &=& \dfrac{1}{D}(w - u), 
\end{array}
\label{e:6slow}
\end{equation}
where we used a regular expansion for $A=A_0+\varepsilon A_1$, $B=B_0+\varepsilon B_1$, and
$C=C_0+\varepsilon C_1$ to be able to easily distinguish between system parameters of strict order~$1$ and order~$\varepsilon$.
Given the singular perturbed nature of the ODE
system~(\ref{e:6slow}), this system can be viewed as the {\it slow system}
with the corresponding {\it fast system} of the form 
\begin{equation}
\begin{array}{rcl}
u_\xi &=& p,\\
 p_\xi &=& 
-u + u^3 + (A_0+\varepsilon A_1) v+(B_0+\varepsilon B_1) w
                 +C_0+\varepsilon C_1, \\
v_\xi &=& \varepsilon q,\\
q_\xi &=&\varepsilon(v - u), \\[1.5mm]
w_\xi &=& \varepsilon\dfrac{r}{D},\\[3mm]
r_\xi &=& \dfrac{\varepsilon}{D}(w - u), 
\end{array}
\label{e:6fast}
\end{equation}
where the fast variable is written as $\xi = x/\varepsilon$. 
For $\varepsilon\neq0$, these systems are equivalent, though they are different in the singular limit $\varepsilon \to 0$.

The fast system is Hamiltonian as it takes the form $\bu_\xi = J \nabla
H(\bu)$, with $\bu = (u,p,v,q,w,\\ r)$ and Hamiltonian
\begin{equation}
  \label{eq.ham}
  H(\bu) = \frac12{p^2} - \frac14{u^4}+\frac12u^2 - \left(
  \frac{A q^2}{2}+ \frac{B r^2}{2}
  - \frac{A v^2}{2}- \frac{B w^2}{2}
  +(A v + B w +C)u  \right),
  \end{equation}
and nonstandard symplectic operator
\[
J = \mathop{\rm diag}\left(
  \begin{pmatrix}  0&1\\-1&0 \end{pmatrix},
  \begin{pmatrix}  0&-\dfrac\varepsilon A\\\dfrac\varepsilon A&0 \end{pmatrix},
  \begin{pmatrix}  0&-\dfrac\varepsilon{DB}\\\dfrac\varepsilon{DB}&0 \end{pmatrix}
\right),
\]
where we recall that $A=A_0+\varepsilon A_1$, $B=B_0+\varepsilon B_1$, and
$C=C_0+\varepsilon C_1$. 

\subsection{Equilibria for $\varepsilon \neq 0$}\label{s:equilbria}

For $\varepsilon \neq 0$, the equilibria of \eqref{e:6slow}/\eqref{e:6fast} are given by
\begin{equation}\label{eq:equilib}
\bue= \ueq\,(1,0,1,0,1,0),
\quad\mbox{with}\quad
\ueq^3-\ueq(1-A-B) +C =0\,.
\end{equation}
If $A+B\geq 1$, or if $A+B<1$ and
$|C| > 2((1-(A+B))/3)^{3/2}$,
then the equation for
$\ueq$ has exactly one solution. If $A+B<1$ and
$|C| < 2((1-(A+B))/3)^{3/2}$,
then there are exactly
three roots, see Figure~\ref{fig_ueps}. 
\begin{figure}
  \centerline{\includegraphics{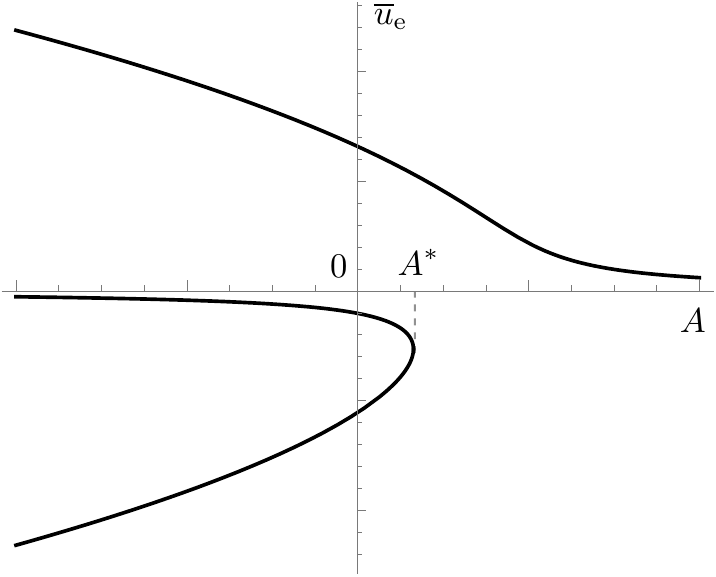}}
  \caption{
    Typical fixed point structure of
   \eqref{eq:equilib} for fixed $B$ and $C$; upon increasing $A$ we transition from three
   roots to one root at $A=A^*:=1-B-3(C/2)^{2/3}$.
 }
   \label{fig_ueps}
\end{figure}

The characteristic polynomial associated with the
linearisation about a fixed point in~\eqref{e:6fast} is
\begin{equation}\label{eq:charpoly}
p(\lambda) =
(\lambda^2-(3\ueq^2-1))(\lambda^2-\varepsilon^2)(D^2\lambda^2-\varepsilon^2) 
+ A\varepsilon^2(D^2\lambda^2-\varepsilon^2) +
BD^2\varepsilon^2(\lambda^2-\varepsilon^2).
\end{equation}
If $3\ueq^2-1$ is not small, that is, not of order~$o(1)$ for
$\varepsilon\to0$, then there are two fast eigenvalues with
$\lambda^2_f (\varepsilon)= 3\ueq^2-1 +\mathcal{O}(\varepsilon^2)$ and four
slow ones. The slow ones can be written as
$\lambda^2_s(\varepsilon)=\lambda_0\varepsilon^2 +\mathcal{O}(\varepsilon^4)$,
where $\lambda_0$ is a solution of
\[
D^2(3\ueq^2-1)\lambda_0^4 -\lambda_0^2 [(1+D^2)(3\ueq^2-1)+D^2A+B] 
+ (3\ueq^2-1)-(A+B)=0.
\]
If $3\ueq^2-1$ becomes small, i.e., when $A+B+\sqrt{3} |C| -2\slash3 =
o(1)$, then the eigenvalues denoted by $\lambda^2_f
(\varepsilon)$ become small too and the fast-slow decomposition breaks
down. In the following sections, we will see that this is consistent
with the slow manifold losing hyperbolicity at this point.

To finish this section on the equilibria, 
we give some more details about the persisting fixed points in the
full system when
$B_0=0=C_0$, as these equilibria will play an important role in the upcoming
analysis. 
\begin{lemma}
\label{L:new1}
Let Assumption~\ref{H1} hold. 
Then there is some $\varepsilon_0>0$ such that for all $0<\varepsilon<\varepsilon_0$ the equilibria of 
\eqref{e:6slow}/\eqref{e:6fast} given in \eqref{eq:equilib} satisfy $\ueq =
\ueq^0(A_0)+\mathcal{O}(\varepsilon)$ where
$\ueq^0(A_0)$ solves 
\begin{align}
\label{C0}
(\ueq^0)^3
-\ueq^0(1-A_0)=0\,.
\end{align}
\begin{itemize}
\item For $A_0>1$ \eqref{e:6slow}/\eqref{e:6fast} has one equilibrium and this equilibrium is $\mathcal{O}(\varepsilon)$. Moreover, the linearisation about the equilibrium has two real hyperbolic slow $\mathcal{O}(\varepsilon)$ eigenvalues and two pairs of purely
imaginary eigenvalues, one fast $\mathcal{O}(1)$ pair and one slow $\mathcal{O}(\varepsilon)$ pair; 
\item for $A_0<1$ \eqref{e:6slow}/\eqref{e:6fast} has three equilibria, one of which is $\mathcal{O}(\varepsilon)$ while the other two are 
$\mathcal{O}(1)$. The linearisation about the small equilibrium has two pairs of real hyperbolic slow $\mathcal{O}(\varepsilon)$ eigenvalues and one pair of purely imaginary fast $\mathcal{O}(1)$ eigenvalues. 
The linearisation about the two $\mathcal{O}(1)$ equilibria have
\begin{itemize}
\item for $A_0<2/3$ two pairs of hyperbolic slow $\mathcal{O}(\varepsilon)$ eigenvalues and one pair of hyperbolic fast $\mathcal{O}(1)$ eigenvalues. At lowest order they are given by
$\lambda_{s,1}^2=\varepsilon^2/D^2$;
$\lambda_{s,2}^2=2\varepsilon^2(1-A_0)/(2-3A_0)$; $\lambda_f^2=2-3A_0$; and
\item  
for $2/3<A_0<1$ two pairs of hyperbolic slow $\mathcal{O}(\varepsilon)$  eigenvalues and one pair of purely imaginary fast $\mathcal{O}(1)$ eigenvalues.
\end{itemize}
\end{itemize}
\end{lemma}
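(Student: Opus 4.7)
My plan is to split the lemma into two pieces: (i) existence and the leading-order form of the equilibria, and (ii) the spectral structure of the linearisation at each equilibrium. For (i), under Assumption~\ref{H1} the equilibrium equation in~\eqref{eq:equilib} becomes
\[
F(\ueq,\varepsilon) := \ueq^3 - \ueq\bigl(1-A_0-\varepsilon(A_1+B_1)\bigr) + \varepsilon C_1 = 0.
\]
At $\varepsilon=0$ this reduces to~\eqref{C0}, whose real roots are $\ueq^0=0$ together with $\ueq^0 = \pm\sqrt{1-A_0}$ when $A_0<1$. For $A_0\neq 1$ all of these roots are simple, so the implicit function theorem produces unique smooth branches $\ueq(\varepsilon)$ on some interval $0<\varepsilon<\varepsilon_0$. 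A Taylor expansion at the zero root gives $\ueq = \varepsilon C_1/(1-A_0) + \mathcal{O}(\varepsilon^2)$, which is the claimed $\mathcal{O}(\varepsilon)$ equilibrium, while the two nonzero branches remain $\mathcal{O}(1)$. This yields exactly one equilibrium for $A_0>1$ and three for $A_0<1$.

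For (ii), I would substitute $\ueq = \ueq^0(A_0) + \mathcal{O}(\varepsilon)$ into the characteristic polynomial~\eqref{eq:charpoly} and analyse the two natural scalings. In the fast scaling $\lambda = \mathcal{O}(1)$ the leading-order polynomial is proportional to $\lambda^4(\lambda^2 - (3(\ueq^0)^2-1))$, so the single fast pair satisfies $\lambda_f^2 = 3(\ueq^0)^2 - 1 + \mathcal{O}(\varepsilon^2)$. This equals $-1$ (purely imaginary) at $\ueq^0=0$, and $2-3A_0$ at $\ueq^0=\pm\sqrt{1-A_0}$, with the sign change at $A_0=2/3$ corresponding exactly to the hyperbolic/elliptic transition in the lemma. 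In the slow scaling $\lambda = \varepsilon\tilde\lambda$, dividing $p(\varepsilon\tilde\lambda)$ by $\varepsilon^4$ and using $B_0=0$ factorises the leading-order polynomial as
\[
(D^2\tilde\lambda^2 - 1)\bigl[A_0 - (3(\ueq^0)^2 - 1)(\tilde\lambda^2-1)\bigr]=0.
\]
One slow pair is therefore always $\lambda_{s,1}^2 = \varepsilon^2/D^2$ (real hyperbolic), and the other is $\lambda_{s,2}^2 = \varepsilon^2\bigl[1 + A_0/(3(\ueq^0)^2-1)\bigr]$. Evaluating at $\ueq^0=0$ yields $\lambda_{s,2}^2 = \varepsilon^2(1-A_0)$, whose sign distinguishes $A_0<1$ from $A_0>1$ at the small equilibrium; evaluating at $\ueq^0 = \pm\sqrt{1-A_0}$ yields $\lambda_{s,2}^2 = 2\varepsilon^2(1-A_0)/(2-3A_0)$, which recovers the stated leading-order formula.

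Persistence of all these eigenvalues to $\varepsilon>0$ follows from simplicity of the leading-order roots (away from the excluded degeneracies) via an implicit function argument applied to the scaled polynomials, or equivalently by standard analytic perturbation theory for simple eigenvalues; this also yields the $\mathcal{O}(\varepsilon^2)$ corrections absorbed into the stated leading orders. The main delicacy is keeping the fast/slow separation uniformly valid: it breaks down precisely when $3(\ueq^0)^2 - 1 = o(1)$, that is, at the Turing-type balance $A_0 = 2/3$ on the non-trivial equilibria (where a fast eigenvalue crosses zero and merges with the slow spectrum, as flagged in the paragraph preceding the lemma) and at $A_0=1$ where the three equilibria collide. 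The hypotheses of the lemma exclude open neighbourhoods of both thresholds, so the regular fast/slow expansions above carry through without additional work.
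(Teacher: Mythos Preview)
Your argument is correct and follows essentially the same route as the paper: reduce~\eqref{eq:equilib} under Assumption~\ref{H1} to obtain~\eqref{C0} and its simple roots, then read off the fast and slow eigenvalue pairs from the characteristic polynomial~\eqref{eq:charpoly} with $B_0=0$. The paper packages the spectral computation slightly differently, factoring $p(\lambda)$ to leading order as $\bigl[(\lambda^2-(3(\ueq^0)^2-1))(\lambda^2-\varepsilon^2)+A_0\varepsilon^2\bigr](D^2\lambda^2-\varepsilon^2)$ and extracting $\lambda_{s,1}$, $\lambda_{s,2}$, $\lambda_f$ directly from that factorisation, whereas you pass through two explicit scalings; the resulting formulae are identical. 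One small slip: at the nonzero equilibria the fast pair satisfies $\lambda_f^2 = 3(\ueq^0)^2-1+\mathcal{O}(\varepsilon)$, not $\mathcal{O}(\varepsilon^2)$, since $\ueq=\ueq^0+\mathcal{O}(\varepsilon)$ feeds into $3\ueq^2-1$ at first order; this does not affect any of the sign conclusions.
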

Note that the case $A_0=2/3$ related to the Turing bifurcation (the dotted vertical line in the bifurcation diagram of Figure~\ref{fig_overview} indicates when $A=2\slash3$ to leading order) will be discussed in the next section, see in particular Lemma~\ref{L:HH}. The degenerate case $A_0=1$ is not discussed further as it is not important for the current paper.
\begin{proof}
The leading order expression \eqref{C0} of $\ueq$ follows directly from~\eqref{eq:equilib} for $A_0 \neq 1$ and upon implementing the constraints of Assumption~\ref{H1}. Thus, there is always one equilibrium, which has
$\ueq = \mathcal{O}(\varepsilon)$. When
$A_0>1$, this is the only equilibrium. For
$A_0<1$, there are two more equilibria with $\ueq
=\pm\sqrt{1-A_0}+\mathcal{O}(\varepsilon)$.

When $\varepsilon$ is sufficiently small and $A_0$ is away from $2/3$,
i.e., $\ueq$ is away from $1/{\sqrt3}$ and $3\ueq^2-1$ is not small,
the eigenvalues of the linearisation about the equilibrium are real or
purely imaginary. By Assumption~\ref{H1} we have $B_0=0=C_0$ and the characteristic
polynomial~\eqref{eq:charpoly} becomes
\[
p(\lambda) = 
\left[(\lambda^2-(3(\ueq^0)^2-1)(\lambda^2-\varepsilon^2) 
+ A_0\varepsilon^2\right](D^2\lambda^2-\varepsilon^2)
+ \mathcal{O}(\varepsilon^3\lambda^2+\varepsilon^5 + \varepsilon \lambda^4)\,.
\]
Thus, there is a hyperbolic pair of slow eigenvalues with
$\lambda_{s,1}^2=\varepsilon^2/D^2+\mathcal{O}(\varepsilon^3)$, and, if
$(\ueq^0)^2\neq
1/3$, then there is a pair of fast eigenvalues with
$\lambda_f^2=3(\ueq^0)^2-1+\mathcal{O}(\varepsilon)$ and a second  pair of slow
eigenvalues with $\lambda_{s,2}^2 =
\varepsilon^2\left(1+
(A_0/(3(\ueq^0)^2-1))\right)+\mathcal{O}(\varepsilon^3)$. The results of the lemma now follow immediately. 
\end{proof}

\subsection{A Turing bifurcation}\label{sec:tur}

In this section, we will show how near-equilibrium spatially periodic
patterns can emerge near $\ueq^2=1/3$ through a Turing bifurcation in case that $C$ and $B$ are small and
$A$ is close to $2/3$, i.e., $A_0=2/3$ and $B_0=0=C_0$.\footnote{
However, since the analysis only relies on $\ueq^2$ being close to
$1/3$,
similar results can be derived for the more general case.} That is, we discuss the case where the eigenvalue configuration of the $\mathcal{O}(1)$ equilibria of
\eqref{e:6slow}/\eqref{e:6fast} changes, see Lemma~\ref{L:new1} .

\begin{lemma}
\label{L:HH}
Let Assumption~\ref{H1} hold.
Then there is some $\varepsilon_0>0$ such that for all $0<\varepsilon<\varepsilon_0$ the system \eqref{e:6slow}/\eqref{e:6fast} undergoes a Turing instability at the curve given by
 \begin{align}\label{HHC}
A = \frac23 + \varepsilon \left(\frac{2\sqrt2}{3\sqrt3} - \left(B_1\pm\sqrt3 C_1\right) \right) +\mathcal{O}(\varepsilon^2).
\end{align}
\end{lemma}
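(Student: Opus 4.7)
The plan is to characterise the Turing curve for \eqref{e:2d_system} as the condition on $A$ at which the spatial linearisation \eqref{eq:charpoly} about the equilibrium $\bue$ acquires a double pair of purely imaginary eigenvalues $\lambda=\pm i k_c$. In the Hamiltonian setting of \eqref{e:6fast}, this reversible $1{:}1$ (Hamiltonian--Hopf) resonance is the spatial signature of a Turing instability and marks the birth of a one-parameter family of small-amplitude spatially periodic patterns. The equivalent PDE statement comes from an $e^{\sigma t+ikx}$ ansatz imposing $\sigma=0$ with a double root in $k$; I would quickly note that both viewpoints lead to the same polynomial condition so that working with \eqref{eq:charpoly} loses no information.

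First I would substitute Assumption~\ref{H1} together with $A=2/3+\varepsilon A_1$ into \eqref{eq:equilib} and expand the two $\mathcal{O}(1)$ equilibria identified in Lemma~\ref{L:new1} as $\ueq=\pm 1/\sqrt{3}+\varepsilon u_1+\mathcal{O}(\varepsilon^2)$. The $\mathcal{O}(\varepsilon)$ balance determines $u_1$ explicitly and yields
\[
3\ueq^2-1 \;=\;\varepsilon\,\delta+\mathcal{O}(\varepsilon^2),\qquad
\delta \;=\; -3(A_1+B_1)\mp 3\sqrt{3}\,C_1,
\]
where the $\mp$ tracks the sign of $\ueq^0=\pm 1/\sqrt{3}$. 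This captures the small but non-negligible drift of the previously $\mathcal{O}(1)$ fast eigenvalue pair $\lambda_f^2=3\ueq^2-1$ through zero as $A$ is tuned.

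Next I would expand \eqref{eq:charpoly} in the Turing scaling $\lambda^2=\varepsilon\,\nu$ with $\nu=\mathcal{O}(1)$; this scaling is forced because the fast pair collapses to $\mathcal{O}(\varepsilon)$ at the Turing curve, so the modes that can neutrally cross the imaginary axis live on an intermediate scale. Writing $\tilde p(\mu)=p(\sqrt{\mu})$ as a cubic in $\mu$, substituting $\mu=\varepsilon\nu$, and collecting terms, the leading balance after dividing by $\varepsilon^3$ reduces to
\[
D^2\,\nu\bigl(\nu^2-\delta\,\nu+\tfrac{2}{3}\bigr) \;=\; 0+\mathcal{O}(\varepsilon).
\]
The factor $\nu=0$ is a degenerate branch which, at the next order, reproduces the two genuinely slow $\mathcal{O}(\varepsilon)$ hyperbolic pairs of Lemma~\ref{L:new1}; these remain hyperbolic across the transition and do not take part in the bifurcation. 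The quadratic factor carries the collapsed fast pair.

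Finally, the Turing condition is a double root of $\nu^2-\delta\,\nu+2/3=0$ with $\nu_c<0$, so that $\lambda^2=\varepsilon\nu_c<0$ gives $\lambda=\pm i k_c$. The discriminant condition yields $\delta^2=8/3$, and the requirement $\nu_c=\delta/2<0$ selects $\delta=-\tfrac{2\sqrt{2}}{\sqrt{3}}$. Substituting $\delta=-3(A_1+B_1)\mp 3\sqrt{3}\,C_1$ and solving for $A_1$ produces $A_1=\tfrac{2\sqrt{2}}{3\sqrt{3}}-(B_1\pm\sqrt{3}\,C_1)$, which with $A=2/3+\varepsilon A_1$ is precisely \eqref{HHC}; a standard implicit-function argument closes the $\mathcal{O}(\varepsilon^2)$ remainder. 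The main technical obstacle is the careful bookkeeping of the $\varepsilon$-orders in the cubic $\tilde p(\mu)$ at the singular scaling $\mu=\varepsilon\nu$, together with verifying that the spurious $\nu=0$ branch is harmless and that the correct sign of $\delta$ (hence of $k_c^2=-\varepsilon\nu_c$) is picked in the end.
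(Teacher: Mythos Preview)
Your proposal is correct and follows essentially the same route as the paper: expand the equilibrium about $A_0=2/3$, use the intermediate scaling $\lambda^2=\varepsilon\nu$ in the characteristic polynomial~\eqref{eq:charpoly}, and impose the double purely imaginary root condition. The only cosmetic difference is that you package the computation via $\delta=(3\ueq^2-1)/\varepsilon$ rather than via $u_{e,1}$ directly, but your quadratic $\nu^2-\delta\nu+2/3$ is exactly the paper's $(\mu^2\mp\sqrt3 u_{e,1})^2-3u_{e,1}^2+2/3$ rewritten.
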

It is known that a Turing instability for reaction-diffusion systems is equivalent to a spatial Hamiltonian-Hopf bifurcation; see~\cite[Lemma 2.11]{Scheel2003}, and hence we show the existence of a Hamiltonian-Hopf bifurcation in \eqref{e:6fast}.\footnote{We shall use Turing instability and spatial Hamiltonian-Hopf bifurcation interchangeably throughout the paper.}
\begin{proof}
As seen in section \ref{s:equilbria}, $(\ueq^0)^2=1/3$ will occur for $A_0=2/3$.
Writing
$\ueq
= \pm \sqrt{1/3} +\varepsilon u_{\rm e,1}
+\mathcal{O}(\varepsilon^2)$ and
$A=2/3 + \varepsilon A_1 +\mathcal{O}(\varepsilon^2)$ and using the Ansatz
$\lambda^2 = \varepsilon\mu^2$, the characteristic
polynomial~\eqref{eq:charpoly} can be written as
\[
  p(\lambda) = \varepsilon^3\left[\left(\mu^2\mp\sqrt3 \,u_{\rm
        e,1}\right)^2-3u_{\rm e,1}^2+\frac23 
\right]\mu^2D^2+ \mathcal{O}(\varepsilon^4). 
\]
For a Hamiltonian-Hopf bifurcation, we need a pair of double purely imaginary
roots of the characteristic polynomial. Hence
$u_{\rm e,1}= \mp \sqrt2/3$ and
$\mu^2 = -\sqrt{2/3} +\mathcal{O}(\varepsilon)$.  On the other hand, we
find from the equilibrium equation~\eqref{eq:equilib} that
$u_{\rm e,1}= \mp \sqrt3/2\left(A_1+B_1\pm\sqrt3 C_1\right)$. One can
also verify that the eigenvalues 
(labelled $\lambda_f$ and $\lambda_{s,2}$ in Lemma~\ref{L:new1})
change from pairs on the imaginary
axis to a quadruple in the complex plane upon varying $A$. 
Thus, for $B_0=0=C_0$ a
Hamiltonian-Hopf bifurcation, and hence also a Turing instability \cite[Lemma 2.11]{Scheel2003}, occurs at the curve given by \eqref{HHC}.
\end{proof}

\begin{figure}
\centerline{\includegraphics[width=.5\textwidth]{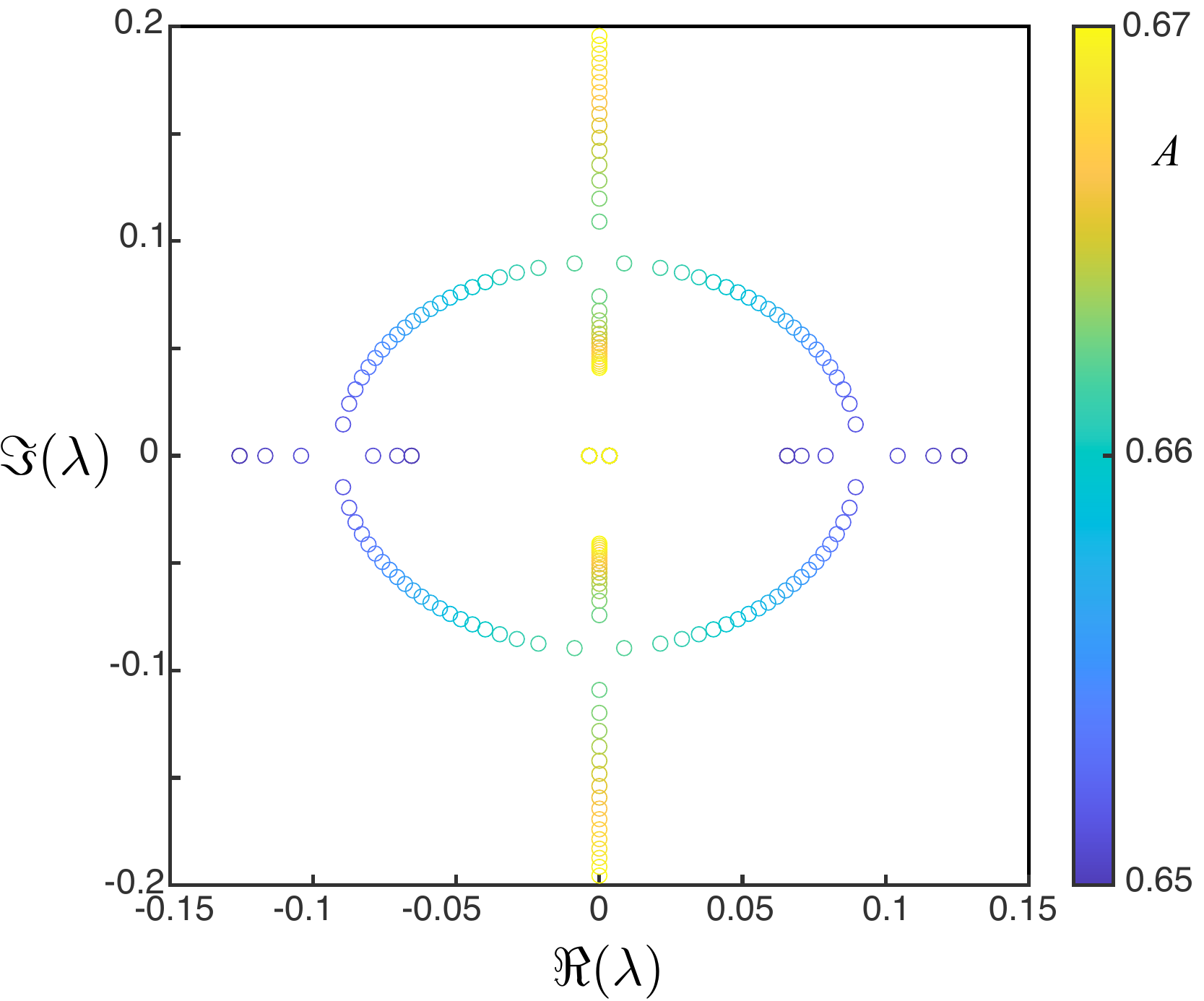}}
\caption{Tracing of the four eigenvalues (labelled $\lambda_f$ and $\lambda_{s,2}$ in Lemma~\ref{L:new1})
involved in the Hamiltonian-Hopf bifurcation for $A$ going
through the Hamiltonian-Hopf curve and $\varepsilon = 0.01$, $B=0.01$,
$C=0$ and $D=3$.}
\label{fig.HH}
\end{figure}
This Hamiltonian-Hopf bifurcation is illustrated in Figure~\ref{fig.HH}, where the eigenvalues
involved in the bifurcation are traced for $A$ going
through the Hamiltonian-Hopf curve \eqref{HHC}.
Note that there is only a very
small window in the $A$ variable for which there is a quadruple of complex 
eigenvalues with nonzero real part, which corresponds to the earlier observation that the
eigenvalues are real or purely imaginary if $A_0\neq 2/3$.

\begin{remark}
In contrast to typical Turing bifurcations we do not observe sinusoidal-like periodic patterns near the bifurcation, see panel \raisebox{.5pt}{\textcircled{\raisebox{-.9pt}
    {1}}} in
Figures~\ref{fig_overview} and \ref{fig_overview22}.
This stems from the fact that at the Hamiltonian-Hopf bifurcation the eigenvalues are $\mathcal{O}(\varepsilon^2)$ and, hence, the period of the bifurcating orbit is expected to be $\mathcal{O}(\varepsilon^{-2})$. So, numerically we do not observe sinusoidal-like periodic patterns bifurcating off for $\varepsilon$ is small. Figure~\ref{fig.HH} corroborates this observation as we only have a  small window in the $A$ variable for which there is a quadruple of complex eigenvalues with nonzero real part. 
\end{remark}

\subsection{The reduced fast system}
Next, we consider the fast dynamics, i.e., the behaviour near the $u$-interfaces in Figures~\ref{fig_overview} and~\ref{fig_overview22}. The reduced fast system is obtained from the singular limit $\varepsilon=0$ of \eqref{e:6fast}. It 
has $v,q,w,r$ constant and the dynamics in $u$ and $p$ is
\begin{eqnarray}
  \label{eq:2fast}
  \begin{array}{lcl}
u_\xi &=& p,\\
 p_\xi &=& -u + u^3 + K(v,w), 
 \end{array}
\end{eqnarray}
where
\begin{eqnarray}
\label{KK}
  K(v,w) := A_0 v + B_0 w + C_0 
\end{eqnarray}
is constant (since $v$ and $w$ are constant).  
This system is Hamiltonian with Hamiltonian
\begin{equation}
  \label{eq.Hfast}
H_f(u,p;K) =
\frac12 p^2 + V_f(u;K), \mbox{ and potential }
V_f(u;K) = -\frac14 u^4+\frac12  u^2 - Ku.
\end{equation}
The equilibria $(\hu_0,\widehat{p}_0)$ in the fast
system~\eqref{eq:2fast} 
are given by $\widehat{p}_0=0$ and the solutions of 
$\hu_0^3-\hu_0+K=0$.
For $|K|\leq 2/(3\sqrt 3)$, there are three $\hu_0$ values
associated with one $K$ value, while there is only $\hu_0$ value
associated with one $K$ value for $|K|>2/(3\sqrt 3)$, see the left panel of
Figure~\ref{fig_uK}. 
\begin{figure}
  \centerline{
  \includegraphics[width=1\textwidth]{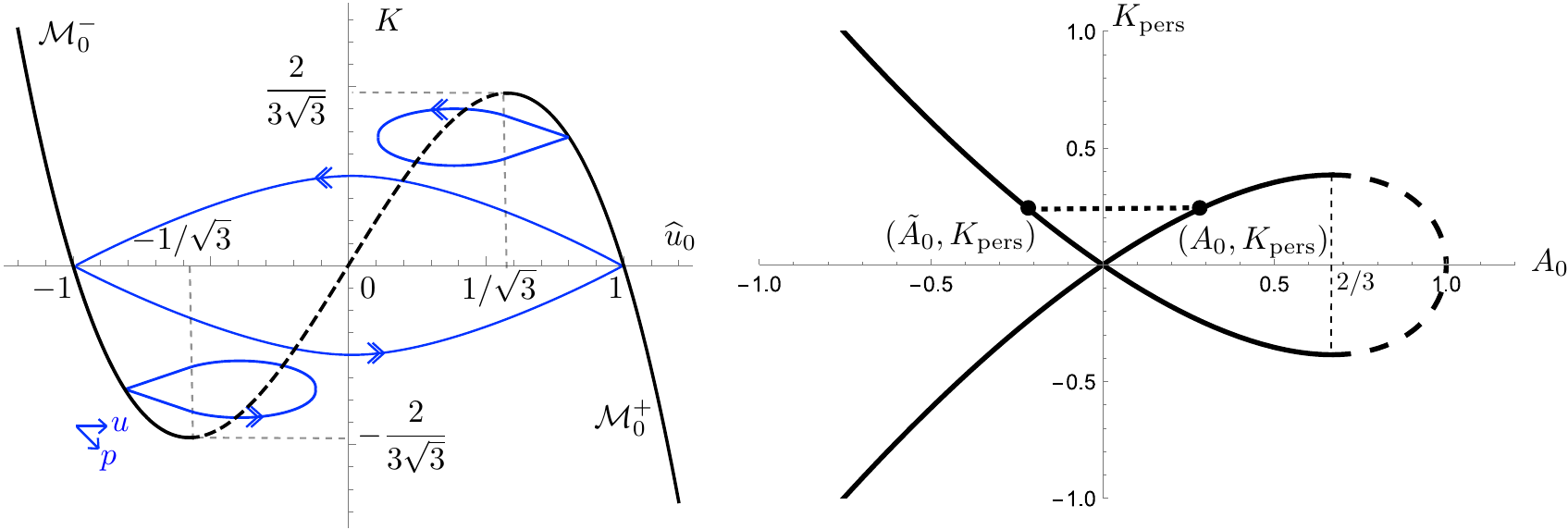}}
  \caption{
 The left panel shows in
   black the relation between $\hu_0$ and $K$ which generates the reduced
   slow manifold $\mathcal{M}_0$, see~\eqref{eq.slowman}. The
equilibria on the left and right branches $\mathcal{M}_0^\pm$ of $\mathcal{M}_0$ are
hyperbolic (solid curves), while the middle ones are elliptic (dashed curve). 
In blue a sketch of the associated reduced fast flow associated to \eqref{eq:2fast}, see also Figure~\ref{fig.fast_dynamics}. For $K=0$ there are two heteroclinic orbits connecting the equilibria on $\mathcal{M}_0^+$ and $\mathcal{M}_0^-$, while for $0<|K| < 2/(3\sqrt3)$ there is one homoclinic orbit connecting to $\mathcal{M}_0^{sgn{(K)}}$.  
The right panel shows the relation $K_{\rm pers}=\pm A_0 \sqrt{1-A_0}$ \eqref{Kpers}, obtained by substituting $\hu_0=\pm\sqrt{1-A_0}$ into \eqref{eq.slowman}. The dashed curve, where $A_0 > 2/3$, is related to the elliptic branch of the reduced slow manifold (shown in the left panel), while the solid curves are related to the hyperbolic branches.
Note that $A_0<0$ on the hyperbolic branches $\mathcal{M}_0^\pm$ when $|\hu_0|>1$.
 }
   \label{fig_uK}
\end{figure}
The potential~$V_f$ and the reduced fast dynamics \eqref{eq:2fast} are sketched in
Figure~\ref{fig.fast_dynamics}.
\begin{figure}
	\centering
	\centerline{\includegraphics{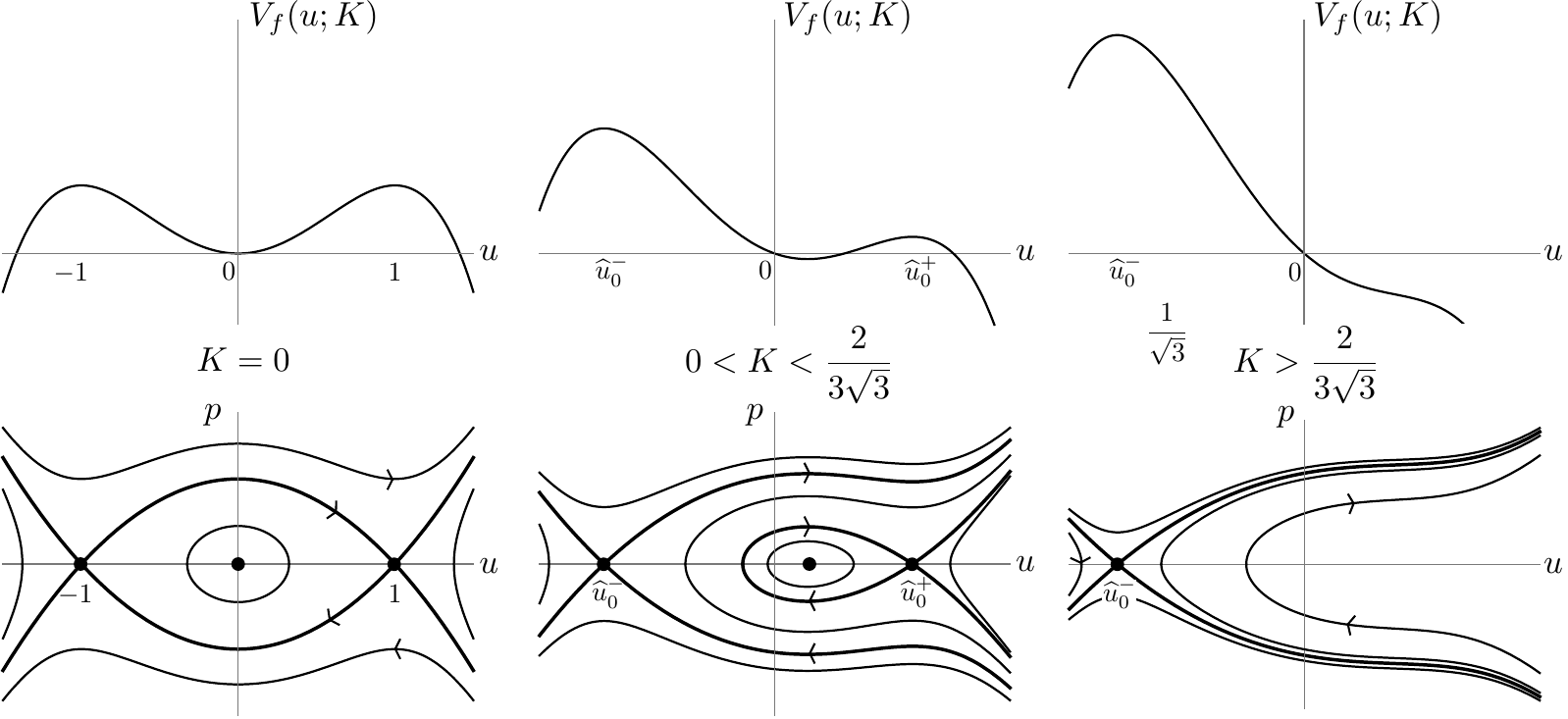}}
        \caption{The potential $V_f(u;K)$ and fast dynamics in the
          various $K$ regions. The figures for $K<0$ follow by the
          symmetry $(u,p,K)\to(-u,-p,-K)$. 
          \label{fig.fast_dynamics}}
\end{figure}

In the full six dimensional reduced fast system (i.e., \eqref{e:6fast} in the
singular limit $\varepsilon=0$) the equilibria form a four dimensional
manifold
\begin{equation}\label{eq.slowman}
  \mathcal{M}_0 = \left\{(\hu_0(v,w),0,v,q,w,r) \:\mid\: 
    \hu_0^3-\hu_0+ K(v,w)=0 ,
    \,\, v,q,w,r \in\mathbb{R}^4\right\}.
\end{equation}
The eigenvalues associated with the linearisation in the reduced fast
system~\eqref{eq:2fast} about the equilibria in $\mathcal{M}_0$ are
given by $\lambda_f(\hu_0) = \pm\sqrt{3\hu_0^2-1}$. Thus, the
equilibria on the left and right branches of $\mathcal{M}_0$ are
hyperbolic, while the middle ones are elliptic. For the analysis later
on, 
the branches with hyperbolic equilibria are of most interest, hence we
define the points $\hu_0^\pm$ as the $u$ value of the equilibria on
$\mathcal{M}_0$ with $\pm\hu_0^\pm>1/\sqrt3$, i.e., $\hu^-_0$
lies on the left branch and $\hu^+_0$ lies on the right branch, see
also 
the left panel of
Figure~\ref{fig_uK}. 
In a
similar way, we define the left and right reduced slow manifolds
$\mathcal{M}_0^-$ respectively $\mathcal{M}_0^+$ as
\[
\mathcal{M}_0^\pm = \left\{(\hu_0(v,w),0,v,q,w,r) \in \mathcal{M}_0\:\mid\: 
\pm\hu_0>\frac{1}{\sqrt3}\right\}.
\]
The hyperbolic four dimensional slow manifolds
$\mathcal{M}_0^\pm$ have five dimensional stable and unstable
manifolds denoted by
$\mathcal{W}_s(\mathcal{M}_0^\pm)$ and
$\mathcal{W}_u(\mathcal{M}_0^\pm)$ respectively. The reduced fast
dynamics creates those stable and unstable manifolds.  The phase
portraits in Figure~\ref{fig.fast_dynamics}, see also the left panel of Figure~\ref{fig_uK}, illustrate that for
$|K|>
2/(3\sqrt3)$ there is only one branch, $\mathcal{M}_0^{-\text{sgn}(K)}$,
  and there are no bounded orbits in the fast
dynamics, hence $\mathcal{W}_s(\mathcal{M}_0^{-\text{sgn}(K)})$ and
$\mathcal{W}_u(\mathcal{M}_0^{-\text{sgn}(K)})$ do not intersect. 
At $K=2/(3\sqrt3)$, we have $\hu_0^+=1/\sqrt3$ and the
hyperbolicity of $\mathcal{M}_0^+$ breaks down ($\mathcal{M}_0^-$ is
still hyperbolic). A similar observation holds for
$K=-2/(3\sqrt3)$ and $\mathcal{M}_0^-$.
For $0<K<2/(3\sqrt3)$, there is one fast homoclinic orbit
associated with $\hu_0^+$ and for $-2/(3\sqrt3)<K < 0$, there is
one fast homoclinic orbit associated with $\hu_0^-$. This implies that parts of the
five dimensional stable and unstable manifolds coincide.
At $K=0$,
$\hu_0^\pm=\pm1$ and there are two fast heteroclinic orbits connecting
these equilibria and hence the two manifolds $\mathcal{M}_0^\pm$.
Thus, parts of the stable
manifold $\mathcal{W}_s(\mathcal{M}_0^\pm)$ coincide with the unstable
manifold $\mathcal{W}_u(\mathcal{M}_0^\mp)$.

As we will show in section~\ref{S:SF} and section~\ref{S:SFS}, $0<|K|<2/(3\sqrt3)$
is related to fast transitions in the profiles of panels \raisebox{.5pt}{\textcircled{\raisebox{-.9pt}
    {1}}} and \raisebox{.5pt}{\textcircled{\raisebox{-.9pt} {2}}} in
Figure~\ref{fig_overview} and panel
\raisebox{.5pt}{\textcircled{\raisebox{-.9pt}
    {1}}} in
Figure~\ref{fig_overview22}, i.e., Theorem \ref{th.one},
while $K=0$ is related to fast transitions
in the profiles in panel \raisebox{.5pt}{\textcircled{\raisebox{-.9pt} {5}}} in
Figure~\ref{fig_overview} and panel \raisebox{.5pt}{\textcircled{\raisebox{-.9pt} {3}}} in
Figure~\ref{fig_overview22}, i.e., Theorem \ref{th.two}, as well as 
panels \raisebox{.5pt}{\textcircled{\raisebox{-.9pt} {6}}} and \raisebox{.5pt}{\textcircled{\raisebox{-.9pt} {7}}} in
Figure~\ref{fig_overview} and panel \raisebox{.5pt}{\textcircled{\raisebox{-.9pt} {4}}} in
Figure~\ref{fig_overview22}, i.e., Theorem \ref{th.two_2}.

In the proof of Theorem \ref{th.one}, the persisting equilibria will play an important role. So here we explicitly discuss the persisting equilibria and the associated reduced fast dynamics under Assumption~\ref{H1} that $B$ and $C$ are small. Under Assumption~\ref{H1}, the persisting equilibria have $\ueq^0=\pm\sqrt{1-A_0}$ in leading order. Substituting $\hu_0=\ueq^0=\pm\sqrt{1-A_0}$ into the relation \eqref{eq.slowman} gives
\begin{align}
\label{Kpers}
K_{\rm pers}=\pm A_0\sqrt{1-A_0}.
\end{align}
This relation is depicted in the right panel of Figure~\ref{fig_uK}. 
In the reduced fast dynamics, if $0<A_0<2/3$, these equilibria have a homoclinic orbit associated to them, see the middle panels of Figure~\ref{fig.fast_dynamics}. 
In these phase portraits with $K=K_{\rm pers}$, there is
another hyperbolic equilibrium with no bounded connections. As this equilibrium has the same $K$-value, it is thus related to another $A_0$-value. In particular, it is related to $\ueq^0=\mp\sqrt{1-\tilde A_0}$, with $-1/3 <\tilde A_0<0$ and $\tilde A_0$ determined by
$-\tilde A_0 \sqrt{1-\tilde A_0}=K_{\rm pers}$, see also the right panel of Figure~\ref{fig_uK}.
If $A_0=0$, i.e., $K=0$, then
there are heteroclinic connections between $\ueq^{0}=+1$ and
$\ueq^{0}=-1$, see the left panels of Figure~\ref{fig.fast_dynamics}.
If $A_0<-1/3$, then the equilibria $\ueq^0=\mp\sqrt{1-A_0}$
do not have homoclinic or heteroclinic connections connected to them in
the reduced fast system, see the right panels of
Figure~\ref{fig.fast_dynamics}. 
These observations are summarised below, see also Figure~\ref{fig_uK}.
\begin{lemma}
\label{l:fd}
Let Assumption~\ref{H1} hold. Then the persisting equilibria satisfy $\ueq^0=\pm\sqrt{1-A_0}$ to leading order. In the reduced fast system, these equilibria have 
\begin{itemize}
\item a homoclinic orbit 
 for $0 < A_0<2/3$;
\item two heteroclinic orbits connecting $\ueq^0=\pm 1$ to $\ueq^0 = \mp 1$ for $A_0=0$; and
\item no heteroclinic or homoclinic orbits for $A_0<0$ and $A_0>2/3$. 
\end{itemize} 
\end{lemma}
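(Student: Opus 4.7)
The strategy is to reduce everything to the two-dimensional Hamiltonian phase portraits of the reduced fast system \eqref{eq:2fast}, which have already been classified in Figure~\ref{fig.fast_dynamics} by the single parameter $|K|$: no bounded orbits for $|K|>2/(3\sqrt 3)$, a single homoclinic based at $\hu_0^{\mathrm{sgn}(K)}$ for $0<|K|<2/(3\sqrt 3)$, and two heteroclinics between $\pm 1$ at $K=0$. I would combine this with the explicit value $K_{\mathrm{pers}}=A_0 \ueq^0=\pm A_0\sqrt{1-A_0}$ from \eqref{Kpers}, the key identity being
\[
\mathrm{sgn}(K_{\mathrm{pers}}) \;=\; \mathrm{sgn}(A_0)\,\mathrm{sgn}(\ueq^0),
\]
which decides \emph{which} of the two hyperbolic equilibria any homoclinic orbit is attached to.

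\textbf{Positive cases.} For $0<A_0<2/3$ I would first note that $|\ueq^0|=\sqrt{1-A_0}>1/\sqrt 3$, so $\ueq^0$ lies on the hyperbolic branch $\mathcal{M}_0^{\mathrm{sgn}(\ueq^0)}$. To show that $|K_{\mathrm{pers}}|<2/(3\sqrt 3)$, I would use the convenient factorisation
\[
A_0^2(1-A_0)-\tfrac{4}{27} \;=\; -\tfrac{1}{27}(3A_0-2)^2(3A_0+1),
\]
which is strictly negative on $(0,2/3)$ (and vanishes only at $A_0=2/3$ and $A_0=-1/3$). Hence $0<|K_{\mathrm{pers}}|<2/(3\sqrt 3)$, placing us in the homoclinic regime. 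Because $A_0>0$, the sign identity gives $\mathrm{sgn}(K_{\mathrm{pers}})=\mathrm{sgn}(\ueq^0)$, so the unique fast homoclinic is precisely the one based at $\ueq^0$. The case $A_0=0$ is immediate: $K_{\mathrm{pers}}=0$, $\ueq^0=\pm 1$, and Figure~\ref{fig.fast_dynamics} directly yields the two heteroclinics between these equilibria.

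\textbf{Non-existence cases.} For $A_0>2/3$ (and $A_0<1$ so that the pair of non-trivial equilibria exists) one has $|\ueq^0|<1/\sqrt 3$, placing $\ueq^0$ on the elliptic middle branch, where the linearisation is a centre and no homoclinic or heteroclinic orbits can originate. For $A_0<-1/3$ the same factorisation yields $K_{\mathrm{pers}}^2>4/27$, so by Figure~\ref{fig.fast_dynamics} the fast portrait has no bounded orbits at all. The genuinely delicate subinterval is $-1/3\leq A_0<0$: here $\ueq^0$ is hyperbolic and a homoclinic does exist in the fast system, so one must not conclude a connection. The sign identity rescues us — since $A_0<0$ we have $\mathrm{sgn}(K_{\mathrm{pers}})=-\mathrm{sgn}(\ueq^0)$, so the fast homoclinic is attached to the hyperbolic equilibrium on the \emph{opposite} branch from $\ueq^0$, and in particular not to $\ueq^0$ itself. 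This sign bookkeeping is the main (and really the only) subtle step in the proof; the rest is reading off the phase portraits. With all six regimes dispatched, the three bullets of the lemma follow.
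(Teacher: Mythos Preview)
Your proposal is correct and follows essentially the same approach as the paper. The paper's argument is the discussion immediately preceding the lemma (the lemma is stated as a summary of those observations and carries no separate proof), and it makes exactly the same case split via $K_{\mathrm{pers}}=\pm A_0\sqrt{1-A_0}$ and the phase portraits of Figure~\ref{fig.fast_dynamics}; your explicit factorisation $A_0^2(1-A_0)-4/27=-\tfrac{1}{27}(3A_0-2)^2(3A_0+1)$ and the sign identity $\mathrm{sgn}(K_{\mathrm{pers}})=\mathrm{sgn}(A_0)\,\mathrm{sgn}(\ueq^0)$ are precisely the analytic content behind the right panel of Figure~\ref{fig_uK}, and your treatment of the subtle interval $-1/3\le A_0<0$ matches the paper's remark that the ``other'' hyperbolic equilibrium in the fast portrait corresponds to $\tilde A_0\in(-1/3,0)$.
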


\begin{remark}
Assumption~\ref{H1} simplifies the relation~\eqref{KK} to $K(v,w)=A_0v$. The persisting equilibria have $v=\ueq$ in leading order, hence this relation becomes $A_0 v = \pm A_0\sqrt{1-A_0}=K_{\rm pers}$~\eqref{Kpers}. Hence, this is consistent with the derivation of $K_{\rm pers}$ via relation~\eqref{eq.slowman}.
\end{remark}

\subsection{The reduced slow system}
Next, we consider the slow dynamics, i.e., the behaviour away from the $u$-interfaces in Figures~\ref{fig_overview} and~\ref{fig_overview22}. 
The reduced slow system dynamics is obtained from the singular limit $\varepsilon=0$ of \eqref{e:6slow}. It lies on the
manifold $\mathcal{M}_0$ given by~\eqref{eq.slowman}. 
The dynamics on the slow manifold is defined by the system
\begin{equation}\label{e:rslow}
\begin{array}{rcl}
v_x &=& q\,,\\
q_x &=&v - \hu_0(v,w)\,, \\
w_x &=& \dfrac{r}{D}\,,\\[2mm]
r_x &=& \dfrac{1}{D}(w - \hu_0(v,w))\,. \\
\end{array}
\end{equation}
Hence, equilibria for the reduced slow system are limits for
$\varepsilon\to0$ of the equilibria determined
in~\eqref{eq:equilib}. Note that away from the equilibria, $K(v,w)$ is
not constant (unless all parameters are order $\varepsilon$, which
implies $K=0$, irrespective of the values of $v$ and $w$) and
therefore, $u=\hu_0(v,w)$ will vary along with the dynamics of the
slow manifold when $K\neq 0$.
We will study the reduced slow system \eqref{e:rslow} in more detail in the upcoming sections for the different types of periodic patterns.

\subsection{Slow-fast periodic
  solutions and persisting locally invariant manifolds}\label{S:per_man}

For any $\delta>0$, define the truncated slow manifolds
\begin{align}
\label{SM}
\mathcal{M}_{0,\delta}^\pm = \left\{(\hu_0(v,w),0,v,q,w,r) \in \mathcal{M}_0^\pm\:\mid\: 
\pm\hu_0\geq\frac{1}{\sqrt3}+\delta\right\}.
\end{align}
By normal hyperbolicity, these manifolds, and their stable and
unstable manifolds, persist (for fixed $\delta$) as locally invariant slow
manifolds $\mathcal{M}_{\varepsilon,\delta}^\pm$ with associated stable
and unstable manifolds in the full dynamics for $\varepsilon \neq 0$
\cite{F79,J95,K99}.  The coinciding stable and unstable manifolds
associated with the homoclinic orbit will persist as the full system
is still Hamiltonian and hence this manifold is the levelset of
the conserved Hamiltonian, which is smoothly changed. 
However, while the unperturbed truncated slow manifolds $\mathcal{M}_{0,\delta}^\pm$ consist of fast equilibria of the reduced fast system~(\ref{eq:2fast}), only the true equilibria (\ref{eq:equilib}) persist in the full perturbed system (\ref{e:6slow})/(\ref{e:6fast})
and the existence of a homoclinic
  orbit to the persisting equilibria
  requires some further analysis and is shown 
  in sections~\ref{S:SF} and~\ref{S:SFS}.
In contrast, the coinciding of the stable and unstable
manifolds associated with the heteroclinic orbit will generically not
persist and further study is thus also needed to obtain conditions that give a
persisting heteroclinic orbit, see section~\ref{S:SFS}.

As seen in Lemma~\ref{L:new1}, when $B$ and $C$ are small, one of the equilibria satisfies
$\ueq^\varepsilon = \mathcal{O}(\varepsilon)$ and, when $A_0<1$, there are
also equilibria with
$\ueq^\varepsilon = \pm\sqrt{1-A_0}+\mathcal{O}(\varepsilon)$. When
$A_0<2/3$, the last two equilibria are hyperbolic and hence part of
the persisting locally invariant slow manifolds $\mathcal{M}_{\varepsilon,\delta}^\pm$.
These equilibria have three dimensional stable and unstable manifolds
$\mathcal{W}_{u,s}(\ueq^\varepsilon)$ and these manifolds are embedded in the
stable respectively unstable manifolds of $\mathcal{M}_{\varepsilon,\delta}^\pm$.
We will use the persisting locally invariant slow manifolds and the slow-fast dynamics to
investigate the persistence of the intersection of their stable and
unstable manifolds and to prove the existence of slow-fast periodic
solutions in the full system and give approximations for these
solutions.

A slow-fast periodic solution has transitions between
slow and fast behaviour. The minimal number of transitions will be
one: there will be one slow phase near
$\mathcal{M}_{\varepsilon,\delta}^+$ or
$\mathcal{M}_{\varepsilon,\delta}^-$ and one fast phase.
The plots marked with \raisebox{.5pt}{\textcircled{\raisebox{-.9pt}
    {1}}} and \raisebox{.5pt}{\textcircled{\raisebox{-.9pt} {2}}} in
Figure~\ref{fig_overview} and
\raisebox{.5pt}{\textcircled{\raisebox{-.9pt}
    {1}}}
    in
Figure~\ref{fig_overview22} correspond to such type of solutions. These
solutions require a homoclinic connection, 
so they can only occur when $0<|K(v,w)|<2/(3\sqrt3)$ in lowest order
during the fast phase, see the middle panels of
Figure~\ref{fig.fast_dynamics}.

To analyse such solutions and capture the spatial dynamics, we will
look for $2L$-periodic solutions with $x\in[-L,L]$ and divide this interval
in one slow and one fast region. 
We write 
\[
  [-L,L]   
  = \left[-L,-\sqrt{\varepsilon}\right]\cup
\left(-\sqrt{\varepsilon},\sqrt{\varepsilon}\right)\cup\left[\sqrt{\varepsilon},L\right] 
=: I_s\cup I_f, \text{ with } I_f:=\left(-\sqrt{\varepsilon},\sqrt{\varepsilon}\right),
\]
where we, without loss of generality, centred the fast
transition at $x=0$, see also the left plot in
Figure~\ref{f:periodic_setup}.
The choice of the asymptotic width of the fast interval to be $2\sqrt\varepsilon$ is
arbitrary and not intrinsically related to the original problem, but rather a necessary ingredient of the geometric approach. Actually, any other choice $M \varepsilon^\chi$ with $\chi \in (0,1)$, such that the fast interval vanishes in the singular limit $\varepsilon \to 0$ in the slow scaling, but blows up to the whole real line in the fast scaling, will work. Note that the asymptotic scaling also does not play an essential role in
the description of the solution. In the slow regions, the dynamics
will take place near one of the two slow locally invariant
manifolds~$\mathcal{M}_{\varepsilon,\delta}^\pm$. In the fast region, the slow
variables $v$ and $w$ are constant in lowest order and the fast variable $u$ leaves
the slow manifold, but has to return to the same manifold as there is
only one transition. This return has to correspond to the dynamics
staying close to a homoclinic connection to the slow manifold. 
\begin{figure}
	\centering
		{\includegraphics{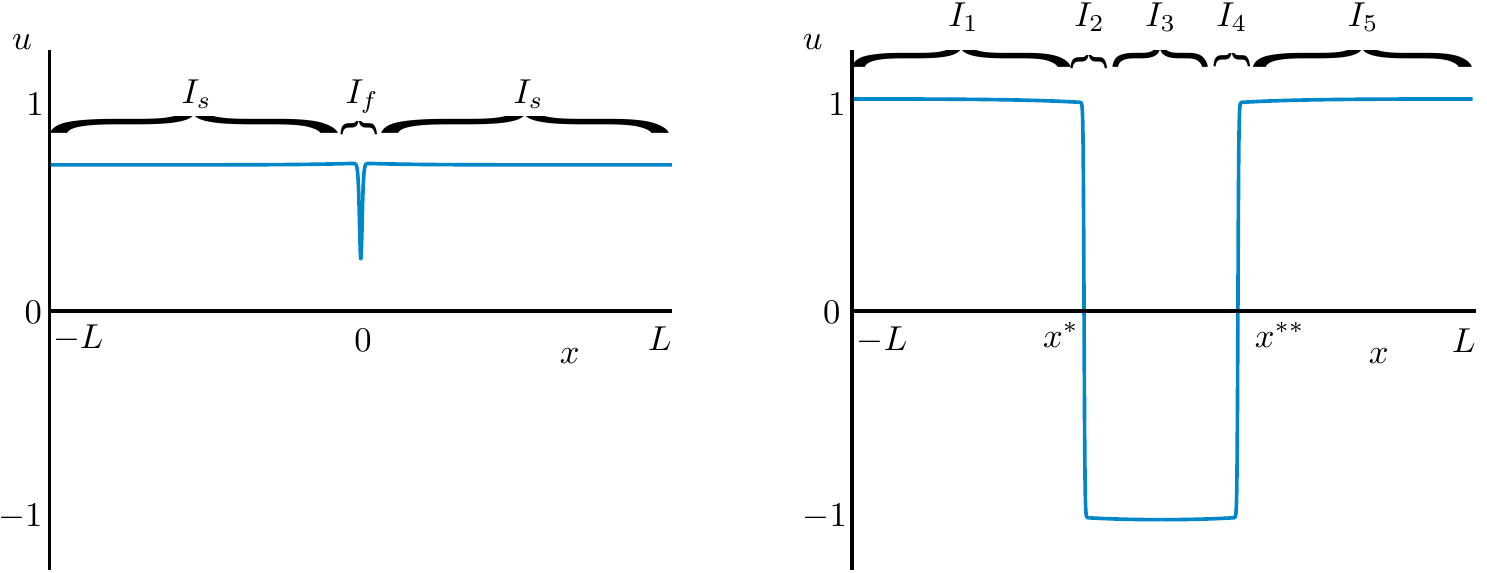}}
	\caption{Periodic existence setup. The left panel is associated to periodic solutions with one fast transition, while the right panel is associated to periodic solutions with two fast transitions.}         
            \label{f:periodic_setup}
\end{figure}

A different type of periodic solution is obtained when there are two
transitions and both fast phases involve a heteroclinic
connection of the stable and unstable manifolds, see, for instance,
the plots marked with \raisebox{.5pt}{\textcircled{\raisebox{-.9pt}
    {4}}}-\raisebox{.5pt}{\textcircled{\raisebox{-.9pt} {7}}} in
Figure~\ref{fig_overview} and
\raisebox{.5pt}{\textcircled{\raisebox{-.9pt} {3}}}-\raisebox{.5pt}{\textcircled{\raisebox{-.9pt} {5}}} in
Figure~\ref{fig_overview22}.  These fast transitions should occur near
$K(v,w)=0$ as this is the only value for which heteroclinic orbits
exist, see Figure~\ref{fig.fast_dynamics}. Note
that $K(v,w)$ can become small (order $\varepsilon$) in two distinctive
ways. Firstly, the system parameters $A$, $B$, and $C$ can be small
(order $\varepsilon$) as is the case for
\raisebox{.5pt}{\textcircled{\raisebox{-.9pt} {4}}} and \raisebox{.5pt}{\textcircled{\raisebox{-.9pt} {5}}} in
Figure~\ref{fig_overview}
and for
\raisebox{.5pt}{\textcircled{\raisebox{-.9pt} {3}}} and \raisebox{.5pt}{\textcircled{\raisebox{-.9pt} {5}}} in
Figure~\ref{fig_overview22}. 
Alternatively, $B$, $C$, $v$ and $w$ can be small (order
$\varepsilon$) while $A$ is
order 1 near the fast transition as is the case for
\raisebox{.5pt}{\textcircled{\raisebox{-.9pt} {6}}} and \raisebox{.5pt}{\textcircled{\raisebox{-.9pt} {7}}} in
Figure~\ref{fig_overview} and for \raisebox{.5pt}{\textcircled{\raisebox{-.9pt} {4}}} in
Figure~\ref{fig_overview22}.

To analyse such $2L$-periodic solutions, we write
\begin{align} \label{intervals}
\begin{aligned}
  [-L,L] &= \left[-L,x^*-\sqrt{\varepsilon}\right]\cup
\left(x^*-\sqrt{\varepsilon},x^*+\sqrt{\varepsilon}\right)\cup\left[x^*+\sqrt{\varepsilon},x^{**}-\sqrt{\varepsilon}\right]  
\\ & \qquad\cup\left(x^{**}-\sqrt{\varepsilon},x^{**}+\sqrt{\varepsilon}\right)   
\cup\left[x^{**}+\sqrt{\varepsilon},L\right]  
\\ &=:I_1\cup I_2 \cup I_3\cup I_4 \cup I_5,
\end{aligned}
\end{align}
where the large odd numbered intervals are expected to be dominated by slow dynamics and
the small even numbered ones by fast dynamics, see the right plot in
Figure~\ref{f:periodic_setup}, and where $-L < x^* < x^{**} < L$ need to be determined. Again, in the slow regions, the
dynamics will take place near one of the two slow locally invariant
manifolds~$\mathcal{M}_{\varepsilon,\delta}^\pm$. The fast dynamics uses
a heteroclinic connection between these two manifolds.

Other periodic slow-fast solutions in Figures~\ref{fig_overview} and \ref{fig_overview22} can be
obtained by combining these two scenarios. For instance, the plot
marked with \raisebox{.5pt}{\textcircled{\raisebox{-.9pt} {7}}} in
Figure~\ref{fig_overview22} has three transitions: the two outer
ones involve the heteroclinic connections (happing near $K(v,w)=0$ as
both $v, w$ are small), while the middle one involves the homoclinic
connection (as $0<|K(v,w)|<2/(3\sqrt3)$ here).

\section{Proof of Theorem~\ref{th.one}: a slow-fast periodic solution with one fast transition}
  \label{S:SF}
In this section, we study slow-fast periodic solutions with one fast transition and, in particular, prove Theorem~\ref{th.one}.
  As we have discussed in \cref{S:SETUP}, a slow-fast periodic solution with one
  fast transition has one slow phase near $\mathcal{M}^+_{\varepsilon,\delta}$ (or
  $\mathcal{M}^-_{\varepsilon,\delta}$) and one fast phase. The fast phase takes place
  near a homoclinic connection between the stable and unstable
  manifolds of $\mathcal{M}_{\varepsilon,\delta}^+$ (or $\mathcal{M}_{\varepsilon,\delta}^-$)
  as it has to return to the slow phase on the same branch of the slow
  manifold. Such connections can only occur when
  $0<|K(v,w)|<2/(3\sqrt3)$ in lowest order during the fast
  phase, see the middle panels of Figure~\ref{fig.fast_dynamics}.
  We assumed in Theorem~\ref{th.one} that both
  $B$ and $C$ are small, i.e. $B_0=0=C_0.$\footnote{This assumption is not without loss of generality, but we postulate that similar results can be obtained in the more general case. }
Then, $K(v,w)$ \eqref{KK} simplifies to $K=A_0v$, with, by assumption, $A_0 \neq 0$. This implies that
  we can use the
  variable~$v$, instead of $K(v,w)$, to characterize the relevant parts of
  the two branches of the slow manifold at which transitions to the
  fast phase can occur:
\[
\mathcal{M}_{0,\rm 1fs}^\pm= \left\{(\hu^\pm_0(v),0,v,q,w,r) \:\mid\: 
  (\hu^\pm_0)^3-\hu^\pm_0+A_0v=0; \,\,
v,q,w,r \in\mathbb{R}^4, \,\frac1{\sqrt3}<\pm\hu_0^\pm(v)<1 \right \}.
\]
Here we used that if $K=A_0v>0$ then $\hu_0^+(v)$ has a homoclinic
connection in the fast zeroth order dynamics, while the same holds for
$\hu_0^-(v)$ if  $K=A_0v<0$, see Figure~\ref{fig.fast_dynamics} and Lemma~\ref{l:fd}.

We start the search for periodic orbits with one fast transition with
a heuristic investigation of such solutions.
Assume that $\bu_s(x,\varepsilon) = (u_s, p_s, v_s, q_s, w_s, r_s)(x,\varepsilon)$ is a $2L$-periodic solution with one fast
transition, represented in the slow coordinates. In the fast
coordinates this solution can be written as
\[
\bu_f(\xi,\varepsilon) := \bu_s(\varepsilon\xi,\varepsilon),\quad \xi\in[-T,T], \quad
\mbox{with} \quad T=\frac L {\varepsilon}.
\]
We will first show that the transition to the fast region has to occur
near a fixed point of the full system, i.e., near
$(\hu^\pm_0(v),0,v,q,w,r) =\pm\sqrt{1-A_0}(1,0,1,0,1,0)$.  During the
slow phase $I_s$, 
the fast variables are near the slow manifold:
\[
  u_s(x) = \hu_0(v(x)) + o(1),
  \quad p_s(x) = 0 + o(1),
  \quad x\in I_s, 
\]
where we recall that we assumed $B$ and $C$ are small and hence $\hu_0$ does not explicitly depend on $w(x)$.
The slow flow is to leading order determined by \eqref{e:rslow}.
During the fast phase~$I_f$, the slow variables are constant in lowest
order:
\[
v_f(\xi) = v_0+o(1); \quad
q_f (\xi) = q_0+o(1); \quad
w_f (\xi) = w_0+o(1); \quad
r_f (\xi) = r_0+o(1); \quad \varepsilon \xi \in I_f;
\]
and $(u_f,p_f)$ move fast near a homoclinic connection to one of the
points $(\hu_0^\pm(v_0),0,v_0,q_0,w_0,\\r_0)$ on the slow manifold
$\mathcal{M}^\pm_{0,\rm 1fs}$. Hence
\[
u_f(\xi) = u_h(\xi, v_0) + o(1); \quad p_f(\xi) = p_h(\xi, v_0) + o(1), 
\]
where
$(u_h,p_h)(\xi,v_0)$ are the homoclinic connection to
$(\hu^\pm_0(v_0),0)$ in the reduced fast system.
To determine near which of the hyperbolic
fixed points on the slow manifold the fast transition takes place,
we evaluate the change in the slow variables during the fast
phase.  Using the fast equations~\eqref{e:6fast}, the change in $q$ is given
by
\begin{eqnarray}
  \Delta^f_q(\varepsilon) &:=& q_f(1/\sqrt\varepsilon) - q_f(-1/\sqrt\varepsilon) = 
                               \int_{-\frac1{\sqrt\varepsilon}}^{\frac1{\sqrt\varepsilon}}
                               \frac{dq_f}{d\xi}\,d\xi =  
                               \varepsilon\int_{-\frac1{\sqrt\varepsilon}}^{\frac1{\sqrt\varepsilon}}
                               (v_f-u_f)\,d\xi\nonumber \\ \label{eq:delta_fq}
                          &=&\varepsilon\int_{-\frac1{\sqrt\varepsilon}}^{\frac1{\sqrt\varepsilon}}
                              (v_0-u_h(\xi,v_0)+o(1))\,d\xi\\ \nonumber
                          &=& 2(v_0-\hu^\pm_0(v_0))\sqrt\varepsilon +
                             o(\sqrt\varepsilon),
\end{eqnarray}
since $u_h(\xi,v_0)$ converges to $\hu^\pm_0(v_0)$ and 
$\varepsilon\int_{-\infty}^\infty (\hu^\pm_0(v_0)-u_h(\xi,v_0))\,d\xi$ is bounded. 
So, in the singular limit $\varepsilon \to 0$,
the slow solution $q_s(x)$ is continuous at $x=0$. Similar arguments
for the slow variables $r$, $v$ and $w$  show that these slow
solutions are also continuous at $x=0$ in lowest order.
By \eqref{e:rslow}, this implies that the lowest order slow solutions
are constant with $v_0=\hu^\pm_0(v_0)=w_0$, $q_0=0=r_0$. The
  only equilibria that satisfy this relation and the constraint $1/\sqrt3<|\hu^\pm_0|<1$ are
  $\hu_0^\pm=\pm\sqrt{1-A_0}$, with $0<A_0<2/3$.
  Thus, if a $2L$-periodic solution with one transition exists, then
  necessarily $0<A_0<2/3$ and its zeroth order approximation is
\begin{eqnarray*}  
v^0(x)=w^0(x)&=&v_0=w_0=\pm\sqrt{1-A_0}; \quad q^0(x)=r^0(x)=0; \quad
x\in[-L,L];\\
(u^0_s(x),p^0_s(x)) &=&\left( \pm\sqrt{1-A_0},0\right),\,\, x\in I_s;\\
  (u^0_f(\xi),p_f^0(\xi)) &=& \pm
(u_h(\xi,\pm\sqrt{1-A_0}),p_h(\xi,\pm\sqrt{1-A_0})),\,\, \varepsilon \xi \in I_f.
\end{eqnarray*}
 Recall that $(u_h(\xi,v_0),p_h(\xi,v_0))$ is the homoclinic solution of
\eqref{eq:2fast} with $B_0=0=C_0$ (i.e., $K=A_0v_0$). Using that the Hamiltonian of the fast
system~\eqref{eq:2fast} is conserved, we find that the extremal point
of the $u$-coordinate of this homoclinic orbit is given by
\begin{align}
\label{U_EXT}
u_0^{\rm ext} =\pm\left(\sqrt{2A_0}-\sqrt{1-A_0}\right).
\end{align}
This extremal point goes to $\mp 1$ for $A_0\to 0$,
illustrating the fact that the fast homoclinic orbit becomes
heteroclinic when $A_0 \to 0$. For $A_0\to 2/3$ the extremal
point converges to its fixed point value $\pm 1/\sqrt3$, illustrating that the
fast homoclinic orbit degenerates in a Hamiltonian-Hopf bifurcation of
the fixed points. See also panels
\raisebox{.5pt}{\textcircled{\raisebox{-.9pt}
    {1}}}-\raisebox{.5pt}{\textcircled{\raisebox{-.9pt}
    {3}}} of
Figure~\ref{fig_overview} and panels
\raisebox{.5pt}{\textcircled{\raisebox{-.9pt}
    {1}}} and \raisebox{.5pt}{\textcircled{\raisebox{-.9pt}
    {2}}} of Figure~\ref{fig_overview22}.

After these heuristics, we now show that
the orbit homoclinic to
$\bue^0=\pm\sqrt{1-A_0}(1,0,1,0,\\1,0)$ for
$0<A_0<2/3$ persists as a homoclinic orbit to the persisting fixed
point $\bue^\varepsilon=\bue = \bue^0 +
\mathcal{O}(\varepsilon)$ (see~\eqref{eq:equilib}) for
$\varepsilon$ small and can be used to construct a slow-fast
periodic solution.

First we observe that the five dimensional stable and unstable manifolds
$\mathcal{W}_{u,s}(\mathcal{M}_{0,\delta}^{\pm})$ transversely intersect the hyperplane
\[
  \mathcal{P} = \{(u,0,v,0,w,0)\mid u,v,w \in \mathbb{R}\}
\]
with the two dimensional intersection given by 
\[
\mathcal{W}_{u,s}(\mathcal{M}_{0,\delta}^{\pm}) \cap
  \mathcal{P} = \{(u_h(0,v),0,v,0,w,0)\mid v,w \in \mathbb{R}\}.
\]
Recall that $u_h(\xi,v)$ is the $u$-component of the symmetric
homoclinic connection to $(\hu_0^\pm(v),0)$ in the reduced fast
system.  Thus for $\varepsilon$ small, the stable and unstable manifolds of
the persisting manifolds $\mathcal{W}_{u,s}(\mathcal{M}_{\varepsilon,\delta}^{\pm})$
will also intersect $\mathcal P$  and the intersection will be nearby 
$\mathcal{W}_{u,s}(\mathcal{M}_{0,\delta}^{\pm})\cap
  \mathcal{P}$.

  A persisting fast homoclinic orbit will be in the intersection of
  the stable and unstable manifolds of the persisting fixed
  point~$\bue^\varepsilon$. The three dimensional stable
  manifold~$\mathcal{W}_s(\bue^\varepsilon)$ lies in the
  five dimensional stable manifold
  $\mathcal{W}_{s}(\mathcal{M}_{\varepsilon,\delta}^{\pm})$ and the homoclinic
  orbit
  \[
    (u_h(\xi,\sqrt{1-A_0}), p_h(\xi,\sqrt{1-A_0}),
    \sqrt{1-A_0},0,\sqrt{1-A_0},0), \quad \xi\geq 0,
  \]
lies nearby $\mathcal{W}_{s}(\mathcal{M}_{\varepsilon,\delta}^{\pm})$. So a
dimension count gives that there has to be at least one point in which
$\mathcal{W}_{s}(\mathcal{M}_{\varepsilon,\delta}^{\pm})$ intersects
$\mathcal{P}$.

Thus there exists an orbit
$\bu^\varepsilon_s(\xi)\in \mathcal{W}_{s}
(\bue^\varepsilon)$ which intersects $\mathcal P$ in
$\xi=0$.  The Hamiltonian nature of the equations gives a
reversibility symmetry in the system: if $\bu(\xi)$ is a solution,
then $(u,-p,v,-q,w,-r)(-\xi)$ is a solution too.  This implies that
$\bu^\varepsilon_u(\xi):=
(u^\varepsilon_s,-p^\varepsilon_s,v^\varepsilon_s,-q^\varepsilon_s,
w^\varepsilon_s,-r^\varepsilon_s)(\xi)$
is a solution on the unstable manifold
$\mathcal{W}_{u}(\bue^\varepsilon)$ which intersects $\mathcal{P}$ at
$\xi=0$ in the same points as $\bu^\varepsilon_s$. In other words,
they form a homoclinic connection to $\bue^\varepsilon$ nearby the
homoclinic connection to $\bue^0$.

Now we have shown the persistence of a homoclinic orbit to the
persisting fixed point $\bu_e^\varepsilon$, we can use Fenichel's
singular perturbation theory, see for instance \cite{F79,J95,K99}, to justify the existence of a slow-fast
periodic orbit and to finalise the proof of Theorem~\ref{th.one}. We omit the further technical details.

\begin{remark}
The results of Theorem~\ref{th.one} are independent of the period of
the $2L$-periodic solution (though $\varepsilon_0$ will depend on $L$
with $\varepsilon_0$ decreasing once $L$ gets large or small as can be
seen in Appendix~\ref{A:HOT}).
We need
to compute the next order correction terms of the periodic solutions
to see how the period comes into play. This computation can be found in Appendix~\ref{A:HOT}.
\end{remark}

\section{Slow-fast periodic solutions with two fast transitions}
\label{S:SFS}

Theorem~\ref{th.one} is valid for $0<A_0<2/3$. For $A_0$ near
$2/3$ a Hamiltonian-Hopf bifurcation occurs, see Lemma~\ref{L:HH}, and we see the creation of the near-equilibrium periodic pattern. 
For $A_0$ near $0$, the homoclinic
orbit associated with the persisting fixed points is near the transition to a pair of heteroclinic orbits, see Lemma~\ref{l:fd}. The
extremal point $u_0^{\rm ext}$ \eqref{U_EXT} becomes $\mp 1 + \mathcal{O}(\sqrt{A_0})$ and
the passage time near the extremal point is of the order
$\log (A_0)$ as follows from the linearisation, hence becomes slow. In this case for $A_0$ near $0$,
a transition to orbits with two fast transitions takes place.

From Figure~\ref{fig.fast_dynamics} and Lemma~\ref{l:fd} it follows that, {\emph{a priori}}, there can be two types of periodic solutions with two fast
transitions: one is a solution with two jumps involving solutions near
the heteroclinic orbits and going from near $\mathcal{M}_\varepsilon^-$
to near $\mathcal{M}_\varepsilon^+$ and back. The other is a solution
near two homoclinics. The latter solution will involve only
$\mathcal{M}_\varepsilon^-$ or only $\mathcal{M}_\varepsilon^+$ and the
continuity condition from the previous section will need to be
satisfied at both jump points. This leads to solutions similar to the
ones of the previous section and we will not further study these type of solutions.

So in this section we focus on periodic solutions with two fast
transitions formed by two heteroclinic connections. If we assume that Assumption~\ref{H1} holds such that $B_0=0=C_0$, then this means that 
during the fast phase, the system should satisfy
$0=K(v_0,w_0)=A_0 v_0$, see Lemma~\ref{l:fd}. Thus, either
$A_0=0$ or $v_0=0$ during the fast transition and the fast variable
$u$ will change from near $-1$ to near $+1$ or the other way around.
As described in section~\ref{S:per_man} and sketched in the right plot
in Figure~\ref{f:periodic_setup}, to characterize a slow-fast periodic
solution with two fast transitions, we define $2L$ to be the period
(in the slow variables)
and split the interval $[-L,L]$ in five sub-intervals:
$[-L,L] = I_1\cup I_2 \cup I_3\cup I_4 \cup I_5$, where the odd
numbered intervals are dominated by slow dynamics and the even
numbered ones by fast dynamics.  Using the translation invariance, we
can assume that
the periodic pattern starts near 
$\mathcal{M}_\varepsilon^+$ at $x=-L$
with $q(-L)=0$, has a transition in $I_2$ to near
$\mathcal{M}_\varepsilon^-$
with $u$ changing fast from near 
$+1$ to near $-1$, 
continues near 
$\mathcal{M}_\varepsilon^-$
 in $I_3$, changes back
to near 
$\mathcal{M}_\varepsilon^+$ in $I_4$ with $u$ changing fast from
near 
$-1$ to near $+1$, 
and continues near 
$\mathcal{M}_\varepsilon^+$ in
$I_5$ to $q(L)=0$, see the right plot in Figure~\ref{f:periodic_setup}. 
We write the interval $I_2$
as centered around a point~$x^*$ that will be determined later, i.e.,
$I_2 = \left(x^*-\sqrt{\varepsilon}, x^*+{\sqrt\varepsilon}\right)$.  Again,
the choice of width of order ${\sqrt\varepsilon}$ in the slow coordinates
is not essential. Similarly, $I_4$ has the same width and is centered
around~$x^{**}$.

The condition $0=K(v_0,w_0)=A_0v_0$ during the fast phase implies that
either $A_0=0$ or $v_0=0$ during the fast phase. In
section~\ref{S:SFS_Asmall}, we will consider the case $A_0=0$, i.e.,
$A$ small, and in section~\ref{S:SFS_Alarge}, we will consider the
case $A_0\neq0$, i.e., $v_0=0$ during the fast jump. That is, in section~\ref{S:SFS_Asmall} we prove Theorem~\ref{th.two} and the proof of Theorem~\ref{th.two_2} is discussed in section~\ref{S:SFS_Alarge}.

\subsection{Proof of Theorem~\ref{th.two}: all parameters small such that $A_0=0$}
\label{S:SFS_Asmall}
First we focus on the case where, in addition to $B_0=0=C_0$ (by Assumption~\ref{H1}),
$A_0=0$. 
For $A_0=0$, the expression $K(v,w)=A_0v=0$ holds for
all $v$ and $w$ and $\hu_0^\pm(v)=\pm1$. Thus at $A_0=0$, the
hyperbolic parts of the slow manifold $\mathcal{M}_0$ are uniform in
the slow variables and are now given by
\begin{equation}\label{eq.slowman55}
  \mathcal{M}^\pm_0 = \left\{\, 
    (\pm 1, 0, v,q,w,r)\mid v,w,q,r \in \mathbb{R}\,\right\}.
\end{equation}
During the fast phases $I_2$ and $I_4$, the slow variables are
constant in lowest order. We denote the lowest order approximation of
the slow variables by $v_*,q_*,w_*,r_*$ in $I_2$ and
$v_{**},q_{**},w_{**},r_{**}$ respectively in $I_4$.  The reduced fast
system~\eqref{eq:2fast} in the $(u,p)$ variables becomes
\begin{equation}\label{eq:fast0}
\begin{array}{rcl}
u_\xi &=& p,\\
  p_\xi &=& -u + u^3.
\end{array}
\end{equation}
The $(u,p)$-phase plane is depicted in Figure~\ref{f:reduce_planes}(a), see also the bottom left panel of Figure~\ref{fig.fast_dynamics},
and the heteroclinic connections are known explicitly and given by
\begin{equation}\label{FAST}
u_{0}^\pm(\xi) = \pm\tanh\left(\frac{\xi}{\sqrt{2}} \right)\quad \mbox{and}\quad
p_0^\pm(\xi) = \pm\frac{1}{\sqrt{2}}\sech^2\left(\frac{\xi}{\sqrt{2}} \right).
\end{equation}
\begin{figure}
	\centering
	\includegraphics[width=0.9\linewidth]{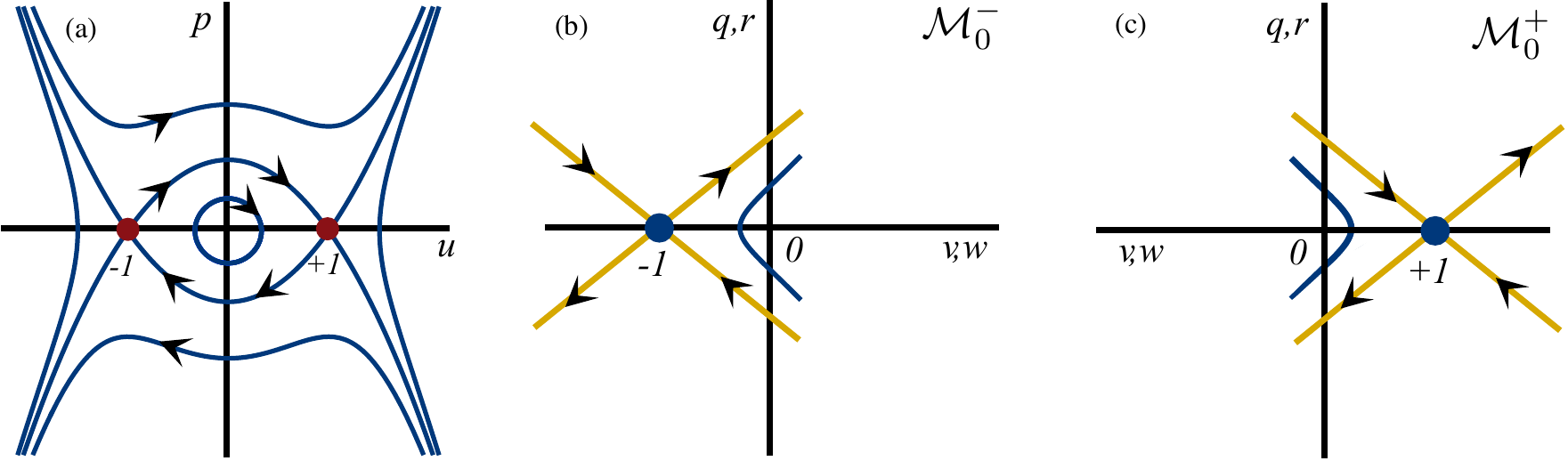}
	\caption{The reduced fast and slow system phase planes for
          $A_0=0$: the reduced fast dynamics is sketched in panel (a);
          panels (b) and (c) show the linear slow dynamics on
          $\mathcal{M}^-_0$ respectively on
          $\mathcal{M}_0^+$. \label{f:reduce_planes}}
\end{figure}
Thus in the fast dynamics on $I_2$ we have in lowest order 
\begin{equation}\label{eq.fast_I2_0}
\begin{array}{{l}}
  u_f(\xi) = u_{0}^-(\xi)+ o(1)
  , \quad
  p_f(\xi) = p_{0}^-(\xi)+ o(1)
  , \quad\\
  v_f(\xi) = v_*+ o(1)
  , \quad p_f(\xi) = p_*+ o(1)
  , \quad w_f(\xi) = w_*+ o(1)
  , \quad r_f(\xi) = r_*+ o(1),
\end{array}
\end{equation}
and on  $I_4$
\begin{equation}\label{eq.fast_I4_0}
\begin{array}{{l}}
  u_f(\xi) = u_{0}^+(\xi)+ o(1)
  , \quad
  p_f(\xi) = p_{0}^+(\xi)+ o(1)
  , \quad\\
  v_f(\xi) = v_{**} + o(1)
  , \quad p_f(\xi) =  p_{**}+ o(1)
  , \quad w_f(\xi) =  w_{**}+ o(1)
  , \quad r_f(\xi) =  r_{**}+ o(1).
\end{array}
\end{equation}
Observe that the fast expressions $u_f$ of \eqref{eq.fast_I2_0} and \eqref{eq.fast_I4_0} to leading order coincide with \eqref{FAST_new} (with $x^*=-x^{**}$, see further down).

The reduced slow system~(\ref{e:rslow}) on $\mathcal{M}_0^\pm$ 
\eqref{eq.slowman55}
is linear with decoupled $v$
and $w$ dynamics:
\begin{equation}\label{eq:slow0}
\begin{array}{rcl}
v_x &=& q,\\
q_x &=&v \mp 1,\\
w_x &=& \dfrac{r}{D},\\[2mm]
r_x &=& \dfrac{1}{D}(w \mp 1).
\end{array}
\end{equation}
It possesses the hyperbolic equilibria $(v,q,w,r)=\pm(1,0,1,0)$ with
stable and unstable manifolds as sketched in
Figure~\ref{f:reduce_planes}(b) and (c).  The fast system gives
boundary conditions for each of the slow intervals $I_1$, $I_3$, and
$I_5$:
\[
\begin{array}{l}
v(x^*)=v_*, \,\,v(x^{**})=v_{**},\,\, q(x^*)=q_*,
\,\,q(x^{**})=q_{**}, \,\,\\
w(x^*)=w_*, \,\,w(x^{**})=w_{**},\,\, r(x^*)=r_*,
  \,\,r(x^{**})=r_{**},
\end{array}
\]
and the periodicity of the solution gives boundary conditions for $I_1$ and $I_5$
\[
v(-L)=v(L), \quad q(-L)=q(L),\quad w(-L)=w(L), \quad r(-L)=r(L).
\]
Furthermore, because the problem is translation invariant we can set, without loss of generality, 
\[
q(-L)=q(L)=0\,.
\]
Solving the ODEs~\eqref{eq:slow0} with the boundary conditions above
leads to $x^{*}=-x^{**}$ and hence $x^{**}\in(0,L)$ (and $x^{*}\in(-L,0)$). The slow
solutions in lowest order are given in Appendix~\ref{s:slow_approx_params_small}. 
Furthermore, at lowest order, the values of the slow variables in the
fast solution in~\eqref{eq.fast_I2_0} and~\eqref{eq.fast_I4_0} are
\begin{equation}\label{eq.jumpvalues}
\begin{array}{l}
v_*=v_{**}= \dfrac{\sinh(L-2x^{**})}{\sinh(L)},\quad
q_*=-q_{**}= -\dfrac{2\sinh(x^{**})}{\sinh(L)}\,\sinh(L-x^{**}),\\[2.5mm]
w_*=w_{**}= \dfrac{\sinh((L-2x^{**})/D)}{\sinh(L/D),},\quad
r_*=-r_{**}= -\dfrac{2\sinh(x^{**}/D)}{D\sinh(L/D)}\,\sinh((L-x^{**})/D).
\end{array}
\end{equation}
Finally, the jump point $x^{**}$ is determined by the Melnikov
condition for the transition between 
$\mathcal{M}_\varepsilon^-$ and
$\mathcal{M}_\varepsilon^+$ 
-- the persisting locally invariant slow manifolds near \eqref{eq.slowman55}. To find this condition, we use the
Hamiltonian of the full fast system~\eqref{eq.ham}. In the fast
interval $I_4$, the solution jumps from near 
$\mathcal{M}_\varepsilon^-$
to near $\mathcal{M}_\varepsilon^+$. 
Specifically, at the end points
$\xi=\xi^{**}\mp1/\sqrt{\varepsilon}$, with $\xi^{**}=
x^{**}/\varepsilon$ we have
\[
  u=\mp 1 +  o(1),\quad
  p = o(1),\quad
  v= v_{**} +  o(1),\quad
  q = q_{**}+  o(1),\quad
  w= w_{**} +  o(1),\quad
  r = r_{**} +  o(1).
  \]
  The Hamiltonian is constant, hence substitution of the expressions
  above in~\eqref{eq.ham} gives
  \[
    0 =
    H\left(\textstyle\bu\left(\xi^{**}+1/\sqrt{\varepsilon}\right)\right)
    - 
    H\left(\textstyle\bu\left(\xi^{**}-1/\sqrt{\varepsilon}\right)\right)
    =
  2\varepsilon (A_1 v_{**} + B_1 w_{**} +C_1)  + o(\varepsilon).
  \]
Using~\eqref{eq.jumpvalues}, we can conclude that $x^{**}$ has to
satisfy \eqref{e:jump}.

In the parameter space, we analyse the number of spatially periodic solutions of~(\ref{e:2d_system_ODE}) and their stability as given by the Melnikov condition~(\ref{e:jump}) and the stability criterion~(\ref{e:stab_per}), respectively.

\begin{lemma}\label{lem.melnikov_overview}
The number of solutions~$x^{**}$ satisfying the Melnikov
condition~\eqref{e:jump} depends on $C_1$.
\begin{itemize}
\item If $C_1 =0$, then there are always one or three solutions to
  \eqref{e:jump}.  To be specific, $x^{**}=L/2$ always satisfies
  the Melnikov condition. Furthermore, define
  $$B_1^1:=\min\left(-\frac{D\sinh(D/L)}{\sinh(L)}A_1,
    -A_1\right)\,, \quad \textnormal{and}\quad
  B_1^2:= \max\left(-\frac{D\sinh(D/L)}{\sinh(L)}A_1,
    -A_1\right).$$  
    \begin{itemize}
    \item If $B_1\in (B_1^1,B_1^2)$, then there are
  two more solutions $x^{**}$ in $(0,L)$, symmetrically placed around
  $L/2$.  
 \item If $B_1=-A_1$, then there are two
  more solutions at $x^{**}=0$ and $x^{**}=L$.
  \item If
  $B_1 =- \dfrac{D\sinh(D/L)}{\sinh(L)} A_1$, then there is a triple
  solution at $x^{**}=L/2$, i.e., at this $B_1$ value, there
  is a pitchfork bifurcation in the solutions of the Melnikov
  condition.
\item If $B_1 \not \in [B_1^1,B_1^2]$,
  there are no more solutions in $[0,L]$.
  \end{itemize}
  See Figure~\ref{f:exist_ab_plane}(a) for details including co-periodic stability of the solutions as calculated from~(\ref{e:stab_per}). 
  \item If $C_1\neq 0$, then the Melnikov
condition~\eqref{e:jump} is satisfied by either $0$, $1$, $2$ or $3$
    solutions.  Transitions in the number of solutions occur at the
    curves (details are visualised in Figure~\ref{f:exist_ab_plane}(b) including co-periodic stability of the solutions as calculated from~(\ref{e:stab_per})):
    \begin{itemize}
    \item $A_1+B_1 =C_1$, when $x^{**}\to L$ (black curve);
    \item $A_1+B_1 =-C_1$, when $x^{**}\to 0$ (red curve);
    \item  the (blue) parametric curve
\begin{align}
\label{e:aster}
\begin{aligned}
A_1^*(z) =\frac{C_1\,\sinh(L)}{\cosh(z)(D\tanh(z/D)-\tanh(z))}, \\
B_1^*(z) = -\frac{ C_1\,D\sinh(L/D)}{\cosh(z/D)(D\tanh(z/D)-\tanh(z))}, 
\end{aligned}
\end{align}
for $z\in [-L,0)\cup(0,L] $, at which there is a saddle-node
bifurcation and two solutions collide.
    \end{itemize}     
    For $B_1$ and $C_1$ fixed and
    $|A_1|$ large, there is a unique solution
    $x^{**}$ to~\eqref{e:jump}, which satisfies $x^{**} =
 L/2 + C_1 \, \sinh{L}/(2A_1)+ \mathcal{O}(A_1^{-2})$,
  $|A_1|\to\infty$. This implies that $v_{**} = 
  -C_1/A_1+ \mathcal{O}(A_1^{-2})$ for $|A_1|\to\infty$. 
    \end{itemize}   
\end{lemma}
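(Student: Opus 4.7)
The plan is to reparametrise via $z:=L-2x^{**}\in(-L,L)$, reducing the problem to counting zeros of
\[
f(z):=M(z)+C_1,\qquad M(z)=A_1\,\frac{\sinh(z)}{\sinh(L)}+B_1\,\frac{\sinh(z/D)}{\sinh(L/D)},
\]
on the open interval $(-L,L)$. First I would record the structural facts that $M$ is smooth and odd, with $M(0)=0$ and $M(\pm L)=\pm(A_1+B_1)$, and compute
\[
M'(z)=\frac{A_1\cosh(z)}{\sinh(L)}+\frac{B_1\cosh(z/D)}{D\,\sinh(L/D)}.
\]
The key monotonicity observation is that, for $D>1$, the ratio $g(z):=\cosh(z)/\cosh(z/D)$ is strictly increasing on $(0,L)$, since $(\log g)'(z)=\tanh(z)-\tanh(z/D)/D>0$. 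Hence $M'(z)=0$ has at most one root in $(0,L)$, and by the evenness of $M'$ the function $M$ has at most three zeros on $(-L,L)$; this already justifies that the Melnikov equation admits at most three solutions.

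For the case $C_1=0$, $f(0)=0$ always produces the solution $x^{**}=L/2$, and any extra zeros come in a symmetric pair $\pm z_0$. Extra zeros exist precisely when $M'(0)$ and $M(L)=A_1+B_1$ have opposite signs, while degenerate boundary cases occur when $M'(0)=0$ (triple root at $z=0$, the pitchfork) or when $M(\pm L)=0$ (the extra pair of zeros hits the endpoints). Writing these conditions in terms of $B_1$ produces exactly the two threshold values $B_1^1$ and $B_1^2$ listed in the lemma together with the four sub-cases.

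For $C_1\neq 0$, I would interpret the task as intersecting the graph of $M$ with the horizontal line at $-C_1$. Transitions in the zero count occur along two kinds of curves in the $(A_1,B_1)$-plane: boundary transitions $M(\pm L)=-C_1$, giving the lines $A_1+B_1=\mp C_1$ (the black and red curves); and tangencies (saddle-nodes), where simultaneously $f(z_0)=0$ and $M'(z_0)=0$ for some interior $z_0\in(-L,0)\cup(0,L)$. Solving this $2\times 2$ linear system for $(A_1,B_1)$ in terms of the tangency location $z_0$, by first extracting $B_1=-A_1\,D\sinh(L/D)\cosh(z_0)/(\sinh(L)\cosh(z_0/D))$ from $M'(z_0)=0$ and then substituting into $M(z_0)=-C_1$, yields precisely the parametric curve~\eqref{e:aster}. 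With the three bifurcation loci identified, I would complete the count by evaluating the number of zeros at one convenient base point in each open region of the $(A_1,B_1)$-plane and propagating by continuity, combined with the at-most-three bound.

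The co-periodic stability in each case then follows by reading off the sign of $M'(L-2x^{**})$ in~\eqref{e:stab_per}, since at a simple zero $M'$ has a definite sign determined by the monotone piece of $M$ containing that zero. The large-$|A_1|$ asymptotics comes out of dividing the Melnikov equation by $A_1$: the leading-order balance $\sinh(z)/\sinh(L)=-C_1/A_1+\mathcal{O}(|A_1|^{-1})$ inverts, via Taylor expansion at $z=0$, to $z=-C_1\sinh(L)/A_1+\mathcal{O}(A_1^{-2})$, which translates directly to the stated expansion of $x^{**}$ and of $v_{**}$ via~\eqref{eq.jumpvalues}. The main obstacle I expect is the combinatorial bookkeeping in the $C_1\neq 0$ case: verifying that the three families above genuinely exhaust all transitions and recovering the precise zero count in each open region of Figure~\ref{f:exist_ab_plane}(b), where care is needed near the mutual crossings of the bifurcation curves to confirm that no further tangencies appear.
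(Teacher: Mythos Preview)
Your proposal is correct and follows essentially the same route as the paper: the paper also passes to the variable $z=L-2x^{**}$, records that $M$ is odd with $M(\pm L)=\pm(A_1+B_1)$, shows via the monotonicity of $\cosh(z/D)/\cosh(z)$ that $M'$ has at most one zero on $(0,L)$, and then reads off the boundary lines $A_1+B_1=\pm C_1$ and the saddle-node parametric curve from the pair of linear equations $M(z_0)=-C_1$, $M'(z_0)=0$. The only organisational difference is that the paper first packages these properties of $M$ into an auxiliary lemma (Lemma~\ref{lem.melnikov}) before invoking them, whereas you derive them inline; your explicit treatment of the large-$|A_1|$ expansion and the sign of $M'$ for stability is more detailed than the paper's terse proof, but the underlying argument is the same.
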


\begin{figure}
	\centering
\includegraphics[width=0.95\linewidth]{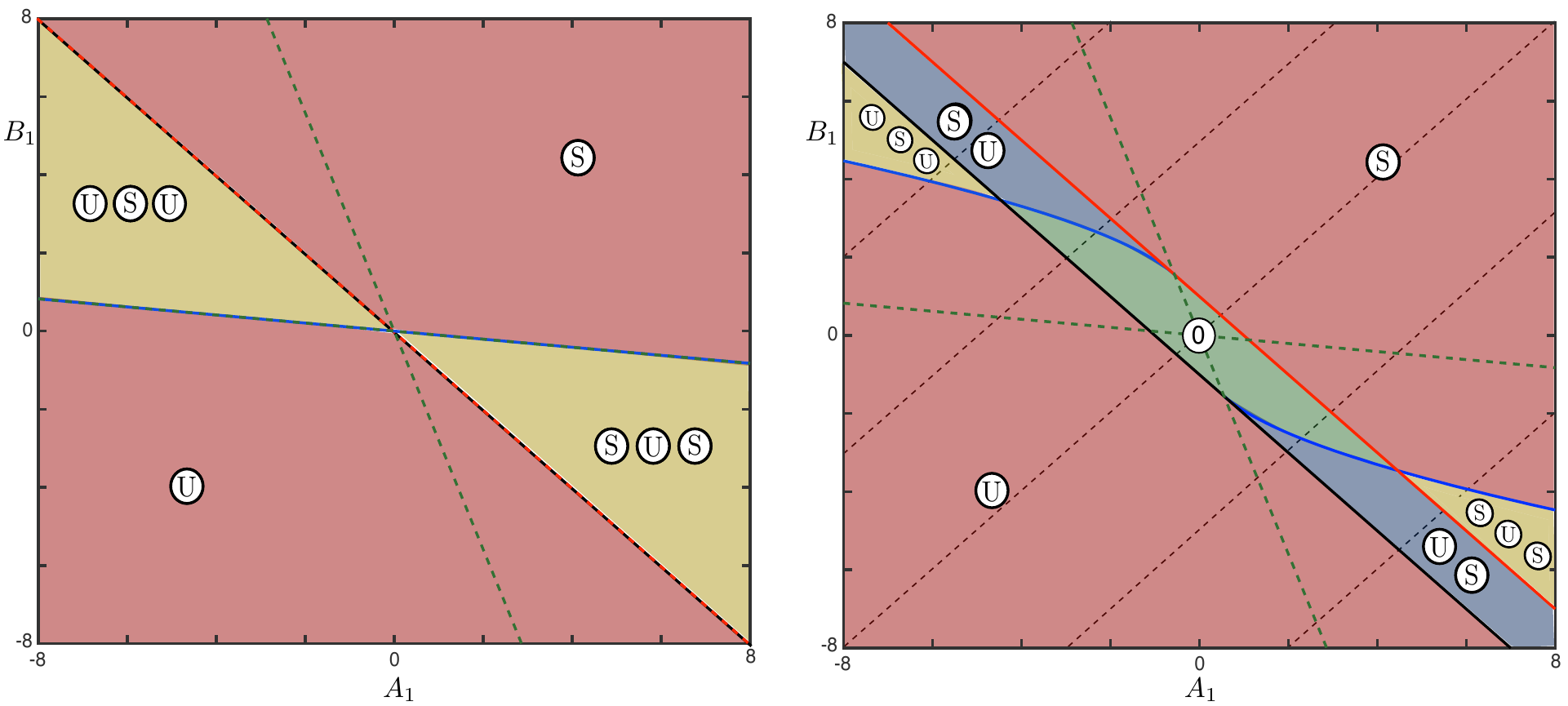}
\caption{
Overview of the stability of the spatially periodic solutions that satisfy the Melnikov condition~\eqref{e:jump} in the  $(A_1,B_1)$-parameter space.  Each {\rm{\raisebox{.5pt}{\textcircled{\raisebox{-.9pt}
      {S}}}}} represents one stable solution and similarly {\rm{\raisebox{.5pt}{\textcircled{\raisebox{-.9pt}
      {U}}}}} represents one unstable solution.
  The left panel shows $C_1=0$ and
  the right panel 
  $C_1=-1$, while the other parameters are fixed at $D=3$ and $L=5$. In the green region there are no roots, in the red regions one, in the blue regions two, and in the yellow regions three. 
  The green dashed curves represent the boundaries of the region in which the
  Melnikov function~$M(z)$
  is non-monotonic on $[-L,L]$, i.e., the green curves are
  $B_1=- A_1 D\tanh(L/D)/ \tanh(L)$ (extremum at $\pm L$)
  and $B_1=-A_1 D\sinh(L/D)/\sinh(L)\,$ (merging of the
  two extrema in the origin).
On the black line ($C_1=A_1+B_1$),
  one of the solutions corresponds to $x^{**}=L$; on the red line
  ($-C_1=A_1+B_1$), one of the solutions corresponds to $x^{**}=0$ (note that the black and the red curve coincide in the left panel). The
  blue curve represents a saddle-node bifurcation corresponding to double solutions $x^{**}\in(0,L]$ (note that the blue and one of the green curves coincide in the left panel), see
  Lemma~\ref{lem.melnikov} for details. 
The termination  
  points of the blue curves on the black/red curves are
  $(A_1,B_1) =(A_1^*(\pm L), B_1^*(\pm L))$, where $A_1^*$
  and $B_1^*$ depend linearly on $C_1$ and nonlinearly on $D$ and $L$, see \eqref{e:aster}. Furthermore, the black/red and blue curves intersect once more at $(A_1,B_1) = 
   (\mp 1-\tilde{b},\pm \tilde{b})$, with $\tilde{b} = 3 (e^{10/3}+1)^2/(e^{10/3}-1)^2 \approx 3.46$.
  The number of roots of~\eqref{e:jump} changes by two when crossing the blue curves and by one when crossing the black and red curves. 
  In Appendix~\ref{S:app_mel}, Figure~\ref{f:bifurcation_a}, bifurcation curves can be found depicting the changes in stability along the dashed black lines in right panel for the the case $C_1=-1$. 
  } \label{f:exist_ab_plane}
\end{figure}

The proof of this lemma involves the analysis of the function $M(z)$
and can be found in Appendix~\ref{S:app_mel}. This Appendix also
contains bifurcation diagrams depicting the changes in stability along
the dashed black lines in right panel for the the case $C_1=-1$
(see Figure~\ref{f:bifurcation_a}). 

Combining the above lemma with the preceding analysis gives the singular limit (i.e., $\varepsilon=0$) existence results as stated in Theorem~\ref{th.two}. What remains to be shown is the persistence of these results for $\varepsilon >0$ small.
 This persistence can be shown by the singular perturbation theory of Fenichel and can be seen as a natural extension of the persistence result for localized $1$-pulse solutions for the three-component reaction-diffusion system~\eqref{e:2d_system} in the same parameter regime, see \S2.2-\S2.4 of \cite{DvHK09} (with $A=\varepsilon A_1=\varepsilon\alpha$, $B=\varepsilon B_1=\varepsilon\beta$ and $C=\varepsilon C_1=\varepsilon\gamma$, see Remark~\ref{XX}). 
 
Before detailing the proof of the persistence of the periodic solutions, first we succinctly describe to persistence proof for the localized $1$-pulse solution. Full details can be found in \cite{DvHK09}.
In \S2.2 of \cite{DvHK09} the authors first derive the singular limit results for localized $1$-pulse solutions (that asymptote to $(-1,-1,-1)+ \mathcal{O}(\varepsilon)$ as $x \to \pm \infty$), that is, they derive the equivalent of the Melnikov condition \eqref{e:jump}, as well as the leading order profiles of the localized solutions. Next, in \S2.4 of \cite{DvHK09} they prove the persistence of such a pattern for $\varepsilon>0$ by showing the existence of a homoclinic orbit $\gamma_{h}(\xi)$ in the fast system (i.e.,~\eqref{e:6fast}) that is contained in the intersection of the stable and unstable manifold of the asymptotic equilibrium point involved and that is in leading order given by the earlier derived profiles. To do so, the authors utilise the reversibility symmetry of the system (i.e., $(\xi,p,q,r) \to -(\xi,p,q,r)$ in \eqref{e:6fast}) and
study both the three-dimensional unstable manifold of the equilibrium point as well as the five dimensional unstable manifold of the (truncated)\footnote{In \cite{DvHK09} there is no need to truncate to locally invariant slow manifolds as the orbits stay away from the fold.} locally invariant slow manifold $\mathcal{M}_\varepsilon^-$ ($u$ near $-1$) as they pass along the other (truncated) slow manifold $\mathcal{M}_\varepsilon^+$ ($u$ near $+1$, see also \eqref{SM}). They show that there is a one-parameter family of heteroclinic orbits in the unstable manifold of the equilibrium point that is forward asymptotic to $\mathcal{M}_\varepsilon^+$. The evolution of such an orbit near $\mathcal{M}_\varepsilon^+$ is governed by the reduced slow system (to leading order given by \eqref{eq:slow0}) and the orbit $\gamma_{h}(\xi)$ of interest is exponentially close to one of these orbits (the one that obeys a version of the Melnikov condition) for an asymptotically long (spatial) time. Next, a three-dimensional tube around this heteroclinic orbit is constructed and this tube is studied as it flows from $\mathcal{M}_\varepsilon^-$ to $\mathcal{M}_\varepsilon^+$ and (partly) back to  $\mathcal{M}_\varepsilon^-$ again. By transversality, which follows from a Melnikov computation, the intersection of this tube with the stable manifold of $\mathcal{M}_\varepsilon^-$ is two-dimensional and, by the reversibility symmetry, orbits in this intersection are close to the part of the stable manifold that are forward asymptotic to the equilibrium point (that is, they are close to the persisting perturbed stable yellow manifolds in panel (b) of Figure~\ref{f:reduce_planes}). What remains to show is that there exists an orbit in this intersection that touches down exactly on this stable manifold.
This follows again from the reversibility symmetry. In particular, a similar two-dimensional object (the intersection of a related three-dimensional tube with the unstable manifold of $\mathcal{M}_\varepsilon^-$) can be constructed and it is subsequently shown that these two two-dimensional objects intersect yielding the existence of the persisting homoclinic orbit $\gamma_{h}(\xi)$.

Next, we extend this proof for localized solutions to periodic solutions. The main difference between a $2L$-periodic solution and a localized solution -- where $L$ is assumed to be sufficiently large to support the slow-fast structure -- is that localized solutions need to asymptote on one of the equilibrium points (that is, they have to lie on the stable and unstable manifolds of the equilibrium point), while this is not the case for a $2L$-periodic solution. Instead the periodic solutions are fixed by the requirement that they  
have zero derivative at the matching point $\pm L$. For the slow components (and in the singular limit $\varepsilon=0$) this difference is indicated in the phase planes of panels (b) and (c) of Figure~\ref{f:reduce_planes}: localized solutions need to lie (asymptotically) close to the yellow stable and unstable manifolds, while $2L$-periodic solutions are indicated by the blue orbits intersecting $\{q,r=0\}$. 
In particular, 
to prove the persistence of a $2L$-periodic pattern for $\varepsilon>0$ one needs to show the existence of a periodic orbit $\gamma_{P}(\xi)$ in the fast system~\eqref{e:6fast} that is contained in the forward and backward flow of the three-dimensional hyperplane 
$\{p=0, q=0, r=0\}$ in the neighbourhood of the persisting equilibrium with $\ueq = -1 + \mathcal{O}(\varepsilon)$. 
However, by the scale separation and the linear nature of the reduced slow system (to leading order given by \eqref{eq:slow0}), the forward and backward flows of this three-dimensional hyperplane will be asymptotically close to the related three-dimensional manifolds of interest for the localized pattern
while they make the transition to the other slow manifold.
That is, the information of the flow of the five dimensional unstable manifold of the truncated locally invariant slow manifold~$\mathcal{M}_{\varepsilon,\delta}^-$ as it passes along the other truncated slow manifold $\mathcal{M}_{\varepsilon,\delta}^+$ (with $u$ near $+1$, see also~\eqref{SM}) can still be utilised, similarly for the stable manifold.
Furthermore, the reversibility symmetry of \eqref{e:6fast} still holds. 
As a result, the proof of the persistence of the localized solution from \cite{DvHK09} (and as outlined above) only needs to be adjusted slightly.
We omit further technical details and refer to~\cite{doelman1997pattern}, where a similar adjusted proof is given for periodic patterns in the one-dimensional Gray-Scott model. This completes the proof of the existence part of Theorem~\ref{th.two}. 

For the proof of the stability result, we refer to~\cite{H18}, where the authors derive a stability criterion for the periodic orbits under perturbations with the same period (known as co-periodic stability). In our notation the condition reads as~\eqref{e:stab_per}.
$\hfill{\Box}$

After finishing the proof of Theorem~\ref{th.two}, we reflect on the case $C=0$ and the transition to $C=\varepsilon C_1$. 
From Theorem~\ref{th.two} and Lemma~\ref{lem.melnikov_overview} it follows that, for $\varepsilon$ small enough and if $A=\varepsilon A_1$, $B=\varepsilon B_1$ and $C=0$, then there exists a symmetric
  periodic solution with fast transitions at $x=\pm L/2$ and the slow components $v,w$ are to leading order zero during
  the fast transitions, see panel \raisebox{.5pt}{\textcircled{\raisebox{-.9pt}
      {5}}} in
  Figure~\ref{fig_overview}. 
  Furthermore, there are two more periodic
  solutions when $B_1 \in
  (-A_1 D\sinh(L/D)/\sinh(L) , -A_1)$, see panel \raisebox{.5pt}{\textcircled{\raisebox{-.9pt}
      {4}}} in Figure~\ref{fig_overview} for a typical example. At
  $B_1= -\ A_1D\sinh(L/D)/\sinh(L)+o(1)$, these solutions get
  created in a symmetric pitchfork bifurcation at the solution with the fast
  transitions at $x=\pm L/2$. At $B_1=-A_1+o(1)$, these solutions cease
  to exist as the transition points start approaching $x=0$ or
  $x=\pm L$. 

The symmetric pitchfork bifurcation breaks open and there are two curves of periodic solutions with two fast transitions when $C=\varepsilon C_1$ with $C_1\neq 0$, see Figure~\ref{fig_overview22}.
This results in regions in $(A_1, B_1)$-parameter space with $0, 1, 2$ or $3$ periodic solutions with two fast transitions. 
The transition between the different regions are determined by \eqref{e:aster} and the curves $A_1+B_1=\pm C_1$.
 Furthermore, we observe that there are no solutions when $A_1$ and $B_1$ are too small compared to $C_1$, see Figure~\ref{f:exist_ab_plane}. For instance, a necessary -- but not sufficient -- condition for the existence of periodic solutions with two fast transitions in this parameter regime is $|A_1| + |B_1| > |C_1|$. 
From Figure~\ref{f:exist_ab_plane} it also follows that the number of
supported periodic solutions with two fast transitions depends
intrinsically on both $A_1$ and $B_1$. For instance, it is not possible to have three different
periodic solutions with two fast transitions for $B=0=B_1$. For
$B=0$, system \eqref{e:2d_system} effectively reduces to a
two-component model. Hence, 
the existence of three different periodic solutions with two fast transitions
requires the three-component system \eqref{e:2d_system} and is not present in the simpler two-component model.
 
 Note that when $x^{**}\to L$, the leading order value of $v$ during the fast transitions approaches minus one (i.e., $v_{**}\to -1$) and the two heteroclinic
  connections are very close together, hence one of the slow intervals
  becomes very small. This type of solution is in lowest order similar
  to the limiting solutions with one fast jump seen in the previous
  section when $A\to 0$. The same holds for $x^{**}\to 0$ as then
  $v_{**}\to 1$.
Furthermore, for $L \to \infty$ the Melnikov condition \eqref{e:jump} approaches the existence condition for stationary localized $1$-pulse solutions (i.e., a periodic solution with the two fast transitions and infinite period) as constructed in \cite{DvHK09}\footnote{In \cite{DvHK09} the stationary localized $1$-pulse solutions asymptote to $-1$, while the constructed period solutions in this paper approach $+1$ at the boundary $\pm L$, see Figure~\ref{f:periodic_setup}. Hence, by the symmetry $(U,V,W,C) \mapsto (-U,-V,-W,-C)$ of the system we actually have that the Melnikov condition \eqref{e:jump} for $L \to \infty$ approaches existence condition of \cite{DvHK09} with $\gamma$ replaced by $-\gamma$.}.
Finally, from \eqref{eq.jumpvalues} and Lemma~\ref{lem.melnikov_overview} it follows that for $A_1 \to
  \infty$ (with $B_1, C_1$ fixed) the transition points of the unique periodic solution
 approach  $\pm L/2$ (i.e., $x^{**} \to L/2$), and hence $v_{**} \to 0$. That is, they connect to the periodic solutions with two fast jumps at $A_0\neq0$, see Figures~\ref{fig_overview} and \ref{fig_overview22} and the next section for more details.

\subsection{Proof of Theorem~\ref{th.two_2}: $A_0\neq 0$ and a fast jump at $v_0=0$}
\label{S:SFS_Alarge}
Next we look at slow-fast periodic solutions with two fast transitions where
$A_0\neq 0$, while keeping $B_0=0=C_0$. That is, Assumption~\ref{H1} holds.
 The $u$-component of the
hyperbolic parts of the slow manifold are characterized by $v$:
\[
\mathcal{M}_0^\pm= \left\{(\hu_0^\pm(v),0,v,q,w,r) \:\mid\: 
  (\hu^\pm_0)^3-\hu^\pm_0+A_0v=0; \,\pm\hu_0^\pm>\frac1{\sqrt3}; \,\, 
v,q,w,r \in\mathbb{R}^4\right\}.
\]
Contrary to the previous section, the function
$K(v,w)=A_0v\equiv \hu^\pm_0-(\hu^\pm_0)^3$ can -- and will -- vary during the
slow evolution. A heteroclinic fast transition between the two slow manifolds can only occur when $K=0$, hence
when $v=0$, see Figure~\ref{fig.fast_dynamics} and Lemma~\ref{l:fd}. During the fast phases $I_2$ and $I_4$, the slow variables
are again constant in lowest order and we denote the lowest order approximation
of the slow variables by $v_*=0$, $q_*,w_*,r_*$ in $I_2$ and
$v_{**}=0$, $q_{**},w_{**},r_{**}$ respectively in $I_4$. Moreover, without loss of generality, we assume that $p<0$ in $I_2$ and $p>0$ in $I_4$, that is, in $I_2$ the fast component $u$ jumps down, while it jumps up in $I_4$, and
$q(-L)=q(L)=0$.  

During the slow phase $I_1\cup I_3\cup I_5$, the slow dynamics in
lowest order is given by
\begin{equation}\label{eq.slow_5.2}
(v_s)_x = q_s, \quad (q_s)_x = v_s-\hu^\pm_0(v_s) ,\quad
  D (w_s)_x = r_s, \quad D(r_s)_x = w_s-\hu^\pm_0(v_s) ,  
\end{equation}
where $\hu_0^\pm(v_s)$ lies on the slow manifold
$\mathcal{M}_0^\pm$. At the end points of the slow phase intervals
(i.e., at $x^*$ and $x^{**}$), the $v_s$ orbits must approach $v=0$,
hence $\hu_0^\pm(v_s)$ approaches $\pm 1$, see, for instance, panels
\raisebox{.5pt}{\textcircled{\raisebox{-.9pt} {6}}} and
\raisebox{.5pt}{\textcircled{\raisebox{-.9pt} {7}}} of
Figure~\ref{fig_overview}.  In the approximate slow system~\eqref{eq.slow_5.2}, the
$(v_s,q_s)$-component decouples from the $(w_s,r_s)$ one. Once the
$(v_s,q_s)$-component is solved, the remaining $(w_s,r_s)$-component
is a linear non-autonomous system, hence it can be solved explicitly
in terms of~$v_s$.

We focus first on the $(v_s,q_s)$-system and drop the index
``s'' for the time being.  As we have seen before (see also Figure~\ref{fig_uK}), if
$K=A_0v<2/(3\sqrt3)$, then there is a unique
$u_0^+(v)\in\mathcal{M}_0^+$ with $u_0^+(v)>1/\sqrt3$ and if
$K=A_0v>-2(3\sqrt3)$, then there is a unique
$u_0^-(v)\in\mathcal{M}_0^-$ with $u_0^-(v)<-1/\sqrt3$. Hence,
the relation $u=u_0^\pm(v)$ is a bijection between
$\left\{v \mid \pm A_0v \leq 2/(3\sqrt3) \right\}$ and
$\left\{ u \mid \pm u \geq 1/\sqrt3\right\}$.
By using this bijection to change from the slow $v$ variable to the $u$ variable, we get an explicit slow system on $I_1\cup I_3\cup I_5$
given by \eqref{eq.uq} and with the boundary conditions $|u|\to 1$ for $x \to x^*$ and $x\to x^{**}$.
The $(u,q)$-system \eqref{eq.uq} is Hamiltonian and this leads to a conserved
quantity given by
\[
  E(u,q) = \frac{A_0^2q^2}{2} + V(u), \quad\mbox{with}\quad
  V(u):=\frac{u^2}{4}\left( A_0(2-3u^2)-2(u^2-1)^2\right).
\]
Since $V'(u) = u(1-3u^2)(u^2-1+A_0)$, the conserved quantity $E$ has
extrema at the fixed points $(u,q)=(0,0)$ and
$(u,q)=\left(\pm\sqrt{1-A_0},0\right)$ and the degenerate points
$(u,q)=\left(\pm1/\sqrt3,0\right)$.
From the linearisation about the fixed and degenerate points, it
follows that for
\begin{itemize}
\item $A_0>2/3$: $E(u,q)$ has saddles at $(u,q)=\left(\pm\dfrac1{\sqrt3},0\right)$;\footnote{The fixed points with $|u|<1/\sqrt{3}$ are not of interest for the slow dynamics. \label{foot}}
\item $A_0<\dfrac23$: $E(u,q)$ has saddles at
  $(u,q)=\left(\pm\sqrt{1-A_0},0\right)$ and minima at
$(u,q)=\left(\pm\dfrac1{\sqrt3},0\right)$.$^{\ref{foot}}$
\end{itemize}
\begin{figure}
	\centering
                \includegraphics[width=0.9\linewidth]{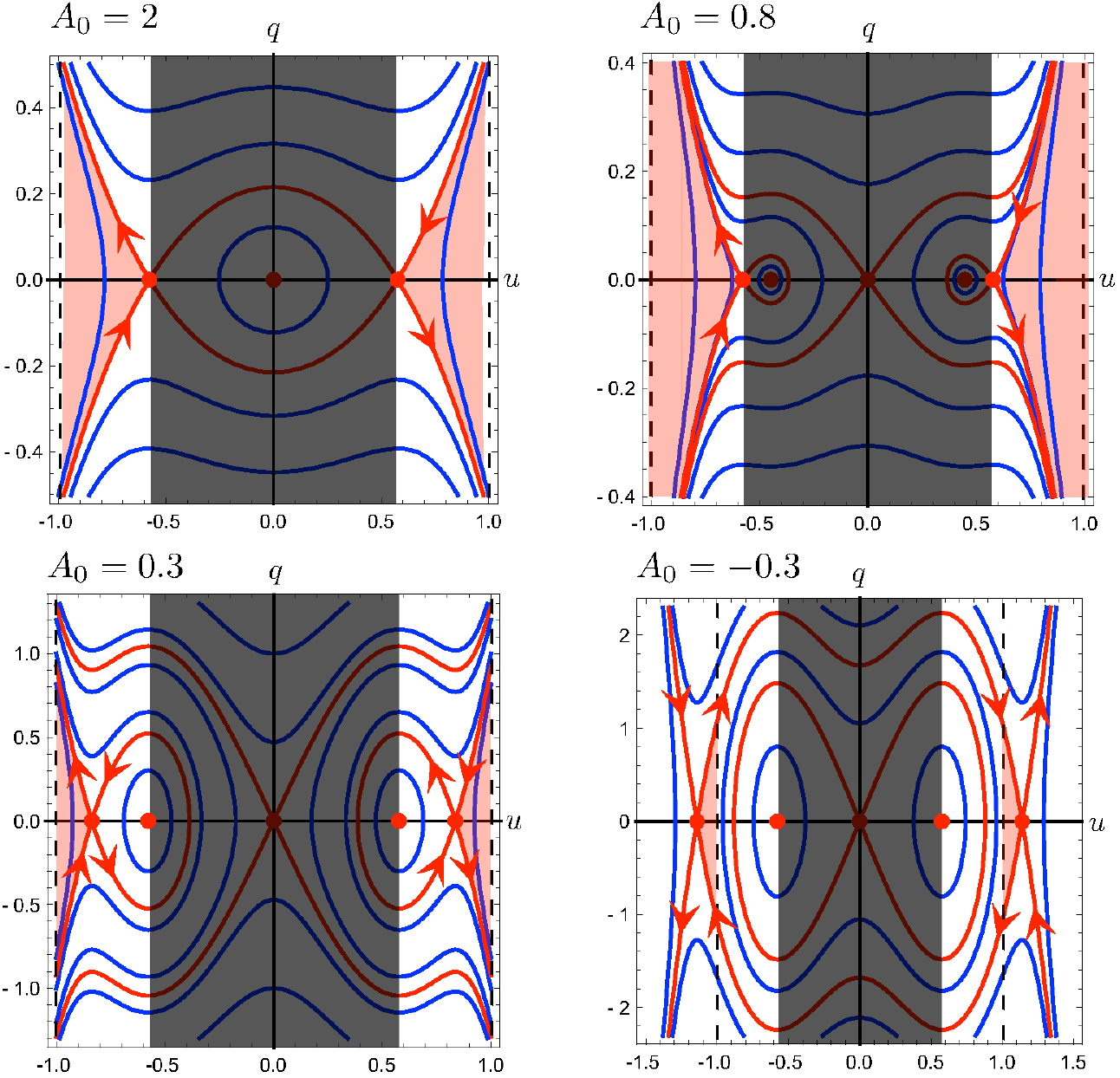}
	\caption{Phase plane of the singular slow
          system \eqref{eq.uq} for four different $A_0$ values. Note that \eqref{eq.uq} is not defined in the grey areas as $|u| <  \dfrac1{\sqrt3}$. The black dashed lines indicate $u = \pm 1$ and the red areas will be of interest in the construction of the periodic solution.    
          \label{f:sing_slow}
          }
\end{figure}
The level sets of $E$ are illustrated in Figure~\ref{f:sing_slow} and
only the regions with $|u|>1/\sqrt3$ have relevance for the slow
dynamics.  The irrelevant regions where $|u|<1/\sqrt3$ are therefore shaded grey in   Figure~\ref{f:sing_slow}.
Every orbit of the slow dynamics~\eqref{eq.uq} lies on a
level set of $E$ and in Figure~\ref{f:sing_slow} also the direction of
the flow is indicated.  The periodic solutions must satisfy the
boundary condition $|u|\to 1$ for $x\to x^*$ and $x\to x^{**}$. Thus
both slow orbits lie on the level set with value
\begin{eqnarray}
\label{ESTAR}
  E^* = \frac{A_0}4 \left(2A_0q_*^2-1\right).
\end{eqnarray}
Only the orbits which move from $u=\pm 1$ towards a saddle and back
towards $u=\pm1$ (hence with the same sign of $u$) are relevant. The
regions these orbits lie in are shaded red in Figure~\ref{f:sing_slow}
and the black dashed lines indicate $u=\pm 1$. That is, the slow parts
of the periodic orbits under construction have to lie in the red
shaded regions in Figure~\ref{f:sing_slow}.  The symmetry of the $E$
level sets imply that $q_*=-q_{**}<0$. Thus in $I_3$, the orbit goes
from $(-1,q_*)$ at $x^*$ to $(-1,-q_*)$ at $x^{**}$, in $I_1$, the
orbit goes from $(u(-L),0)$ at $-L$ to $(1,q_*)$ at $x^*$, and in $I_5$ the
orbit goes from $(1,-q_*)$ at $x^{**}$ to $(u(-L),0)$ at $L$,
see Figures~\ref{f:periodic_setup} and ~\ref{f:sing_slow}. 
Due to the rotational symmetry of order two of system~\eqref{eq.uq} the {\emph{time of flight}} of both
orbits is the same, that is, the spatial time $ x^{**}-x^*$ spend on $I_3$ is the same as 
the spatial time $ 2L-(x^{**}-x^*)$ spend on $I_1\cup I_5$.
Combined with the boundary conditions this implies that $x^*=-L/2$.
In addition, the
    orbits in $I_3$ and $I_1\cup I_5$ are related by symmetry:
    for $x\in I_1$ we have $(u,q)(x) = -(u,q)(x+L) \in I_3$ and for $x\in I_5$ it holds that $(u,q)(x) = -(u,q)(x-L) \in I_3$.

Using the relation between $u_x$ and $q$
  in~\eqref{eq.uq} and substituting this into the definition of $E$,
  we get on $I_3$ an initial value problem for $u$
    \[
      E^* = \frac{(u_x)^2(1-3u^2)^2}{2} +V(u), \mbox{ hence }
      u_x = \pm \frac{\sqrt{2(E^*-V(u))}}{3u^2-1}, \quad u(x^*)=-1.
    \]
    This can be rewritten to get an implicit relation for $u(x)$ and
    the relation between $q_*$ and $L$. The further details depend on
    the value of $A_0$ and we need to distinguish three cases.
\begin{itemize}
\item $A_0<0$. In this case the saddles are at $(\pm\sqrt{1-A_0},0)$, hence
  the $u$-components have an absolute value greater than~$1$, see the bottom right plot of Figure~\ref{f:sing_slow}. The orbits connecting $(-1,q_*)$ -- recall that $q_*<0$ -- with $(-1,-q_*)$
  must have an $E$ value less than $E(-\sqrt{1-A_0},0)= - A_0 (A_0-1)^2/4$, hence
  $-q_*\in \left(0, \sqrt{(2-A_0)/2}\right)$. 
  The minimal $u$ value is
    attained at $x=0$ when $q=0$ and is implicitly given by
  \begin{align*}
      &E(u^*_{\rm min},0) = E^*, \quad -\sqrt{1-A_0}< u_{\rm min}^*<-1 \implies \\
      &\frac{(u^*_{\rm min})^2}{4}\left( A_0(2-3(u^*_{\rm min})^2)-2((u^*_{\rm min})^2-1)^2\right) = \frac{A_0}4
      \left(2A_0q_*^2-1\right), \quad \\
      &  \qquad \qquad \qquad \qquad \qquad \qquad \qquad \qquad \qquad \qquad \qquad-\sqrt{1-A_0}<u_{\rm min}^*<-1.
\end{align*}
    The ODE for $u$ implies the implicit equation for $u(x)$ for $x>0$
    and $x\in I_3$
    \[
      x = \int_{u^*_{\rm min}}^{u(x)}\frac{3u^2-1}{\sqrt{2(E^*-V(u))}}\, du \,,
    \]
    and by taking $x=L/2$ and recalling \eqref{ESTAR}, it gives the relation between $q_*$
    and $L$
    \[
      L = 2\int_{u^*_{\rm min}}^{-1} \frac{3u^2-1}{\sqrt{2(E^*-V(u))}}\, du\,.
    \]
    If $q_*$ goes from 0 to $-\sqrt{(2-A_0)/2}$ then $L$
  goes from 0 to $\infty$. \\
\item $0<A_0<2/3$. Again, the saddles are at $(\pm\sqrt{1-A_0},0)$,
  but now the $u$-components have an absolute value less than~$1$, see the bottom left plot of Figure~\ref{f:sing_slow}.
  The bound on the values of $E$ still gives that $-q_*\in \left(0,
    \sqrt{(2-A_0)/2}\right)$, but now we get a maximal value of
  $u$ on the orbit, given by the relation 
      \begin{align*}     
      &\frac{(u^*_{\rm max})^2}{4}\left( A_0(2-3(u^*_{\rm max})^2)-2((u^*_{\rm max})^2-1)^2\right) = \frac{A_0}4
      \left(2A_0q_*^2-1\right), \quad \\
      &  \qquad \qquad \qquad \qquad \qquad \qquad \qquad \qquad \qquad \qquad \qquad-1<u_{\rm max}^*<-\sqrt{1-A_0}.
\end{align*}
    The  implicit equation for $u(x)$ for $x>0$
    and $x\in I_3$ is
    \[
      x = \int^{u^*_{\rm max}}_{u(x)}\frac{3u^2-1}{\sqrt{2(E^*-V(u))}}\, du  \,,
    \]
    and  the relation between $q_*$
    and $L$ is
    \[
      L = 2\int^{u^*_{\rm max}}_{-1} \frac{3u^2-1}{\sqrt{2(E^*-V(u))}}\, du \,.
    \]
    As before, if $-q_*$ goes from 0 to $\sqrt{(2-A_0)/2}$ then $L$
    goes from 0 to $\infty$.\\
  \item $A_0\geq 2/3$. Now the saddle is at the degenerate point
    $\left(1/\sqrt3,0\right)$, see the top plots of Figure~\ref{f:sing_slow}.   
  Since $E(1/\sqrt3,0)=-2/27+A_0/12$, the bound on the values of $E$ gives that $-q_*\in \left(0,
    \sqrt{\dfrac{2(9A_0-2)}{27A_0^2}}\right)$.  The maximum value of
  $u$ on the orbit is given by the relation
          \begin{align*}     
      &\frac{(u^*_{\rm max})^2}{4}\left( A_0(2-3(u^*_{\rm max})^2)-2((u^*_{\rm max})^2-1)^2\right) = \frac{A_0}4
      \left(2A_0q_*^2-1\right), \quad \\
      &  \qquad \qquad \qquad \qquad \qquad \qquad \qquad \qquad \qquad \qquad \qquad u_{\rm max}^*<-1/\sqrt3.
\end{align*}
    The  implicit equation for $u(x)$ for $x>0$
    and $x\in I_3$ is
    \[
      x = \int^{u^*_{\rm max}}_{u(x)}\frac{3u^2-1}{\sqrt{2(E^*-V(u))}}\, du \,,
    \]
    and  the relation between $q_*$
    and $L$ is
    \[
      L = 2\int^{u^*_{\rm max}}_{-1} \frac{3u^2-1}{\sqrt{2(E^*-V(u))}}\, du \,.
    \]
    However, if $-q_*$ goes to $\sqrt{\dfrac{2(9A_0-2)}{27A_0^2}}$, then
    $u^*_{\rm max}$ goes to the singular value $1/\sqrt3$ and
    $E^*-V(u)$ goes to $(3u^2-1)^2(6u^2+9A_0-8)/108$. Thus the
    $L$ integral loses its singularity at $u=1/\sqrt 3$ and the
    length function is bounded with the maximal length given by \eqref{LMAX_new}   
     \begin{align*} 
   L_{\rm max}(A_0) =   6\sqrt6\int^{-\frac1{\sqrt3}}_{-1} \frac{du}{\sqrt{6u^2+9A_0-8}}  = 
      6\log\left(\frac{\sqrt6+\sqrt{9A_0-2}}{\sqrt2+\sqrt{9A_0-6}}\right).
    \end{align*}
    This expression is monotonically decreasing in $A_0$, decays to 0
    for $A_0\to \infty$ and 
    $L_{\rm max}(2/3)$ $= 6\ln\left(\sqrt2+\sqrt3\right)\approx 6.9$. See
    Figure~\ref{fig.Lmax} for a sketch of this function. Note that
    $L_{\rm max}$ has a vertical derivative at $A_0=2/3$.
\begin{figure}
	\centering
\includegraphics[width=0.5\linewidth]{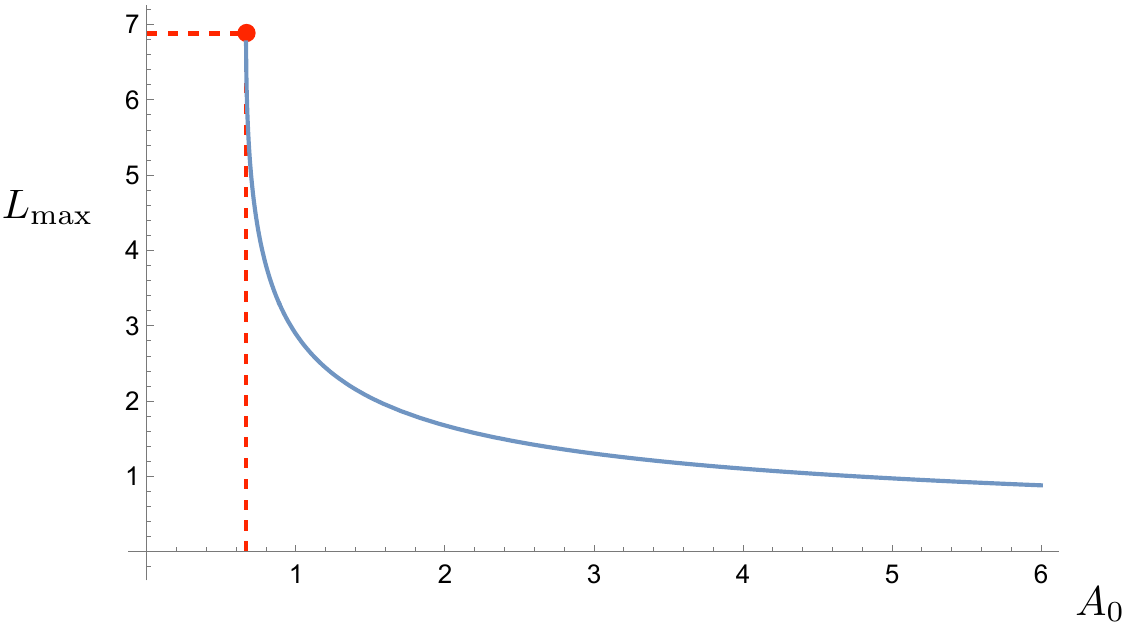}
        	\caption{Plot of $L_{\rm max}$ as function of $A_0$
         \label{fig.Lmax}}
\end{figure}
This explains the turning point in the continuation depicted in the bifurcation diagram in Figure~\ref{fig_overview} at point \raisebox{.5pt}{\textcircled{\raisebox{-.9pt} {8}}} and
at point \raisebox{.5pt}{\textcircled{\raisebox{-.9pt} {6}}} 
in the bifurcation diagram in Figure~\ref{fig_overview22}. From the expression above, it also
    follows that the maximal $A_0$ value for $L=5$ is $A_0\approx
    0.70$.  For $A_0$ close to this maximal value,  $u_{\rm max}^*$ gets
    close to $-1/\sqrt3$ and the $(u,q)$-system~\eqref{eq.uq}
    becomes fast due to the degeneracy and hence a new type of
    solution will start, as can also be seen in panel \raisebox{.5pt}{\textcircled{\raisebox{-.9pt} {8}}} of Figure~\ref{fig_overview} and panel \raisebox{.5pt}{\textcircled{\raisebox{-.9pt} {6}}} of Figure~\ref{fig_overview22}. This is why the critical value is not fully
    reached. \\
\end{itemize}

Now we have established the slow dynamics in the $(u,q)$-system, and hence the $(v,q)$-system, we
can solve the $(w,r)$-system. Using the method of variation of
parameters to solve the inhomogeneous linear ODE and the continuity 
conditions at $x=\pm L/2$ and $x=\pm L$, we get \eqref{slowTH2} for $x\in I_3 $ and \eqref{slowTH22} for the other two slow intervals.
Note that this implies that during the fast phase $w_*=w_{**}=0$ and $r_*=-r_{**}<0$.

Thus far, we have described the lowest order heuristics for the periodic solutions with two fast transitions with $A_0 \neq 0$. Again, a Melnikov function and the singular perturbation theory of Fenichel can be used to prove the persistence of these periodic solutions for $0<\varepsilon \ll 1$. As this is similar in spirit to the proofs for the other two types of periodic solutions constructed before we omit these details.

\section{Discussion}\label{S:discussion} 
In this paper, we studied stationary periodic solutions in a one-dimensional singularly perturbed three-component reaction-diffusion system~\eqref{e:2d_system}. The model was originally developed as a  phenomenological model of gas-discharge dynamics~\cite{P98,P14,P97}. Subsequently, various rigorous existence and stability results of localized states were proven in a series of papers~\cite{CBDvHR15,CBvHIR19,DvHK09,NTU03,R13,TvH21,H18,vHDK08,vHDKP10,vHS11,vHS14}. These results were derived, however, in the parameter regime where the coupling between the slow $v,w$ components with the fast $u$ component is not too strong, i.e., $A, B$ and $C$ in \eqref{e:2d_system} were of order $\varepsilon$. In this paper, we expanded the parameter regime and allowed the parameter $A$ to range from small to order $1$, while keeping the parameters $B$ and $C$ small\footnote{The role of the parameters $A$ and $B$ are interchangeable and similar results can thus be obtained for varying $B$ while keeping $A$ and $C$ small.}. Moreover, in contrast to most previous studies, we analysed periodic solutions instead of localized states.

We showed how near-equilibrium periodic patterns emerge through a
Turing instability and evolve to various far-from equilibrium $2L$-periodic patterns by varying $A$ from order $1$ to small. That is, we showed how the near-equilibrium periodic patterns and far-from equilibrium periodic patterns are connected. In particular, we used techniques from singular perturbation theory to show how a periodic solution with one fast transition emerges through a
Hamiltonian-Hopf bifurcation from the trivial solution for $A$ near $2/3$, see Lemma~\ref{L:HH}. This periodic solution starts as near-equilibrium periodic pattern with a small amplitude, but grows, for decreasing $A$, to a far-from equilibrium periodic pattern with one homoclinic fast transition; see Theorem~\ref{th.one} for the details. Upon decreasing $A$ further to order $\varepsilon$, the periodic solution transforms into a periodic solution with two heteroclinic
 fast transitions. The width of this periodic solution is to leading order determined by the solutions of the Melnikov condition \eqref{e:jump} and is described in Theorem~\ref{th.two}. Note that these periodic patterns are closely related to localized states studied previously in, for instance, \cite{DvHK09}. Upon further decreasing, or increasing, $A$ back to order $1$ the periodic solution with two fast transitions transforms into a different type of periodic solution with two fast transitions; see Theorem~\ref{th.two_2}.

\subsection{Coexistence of multiple periodic solutions}
For a fixed $L$ there is a maximum value $A_{\rm max}$ implicitly defined by $L=L_{\rm max} (A_{\rm max})$ \eqref{LMAX_new} such that a $2L$-periodic solution with two 
 fast transitions and width $L$ ceases to exist upon increasing $A$ to $A_{\rm max}$.  
As a result, the solution branch obtained by the numerical continuation program AUTO-07P \cite{auto} turns around when ones try to continue $A$ past $A_{\rm max}(L)$. We observe that (an) additional small fast transition(s), related to a small homoclinic orbit in the fast system, appears in the solution; see panels
\raisebox{.5pt}{\textcircled{\raisebox{-.9pt}
    {8}}} of
Figure~\ref{fig_overview} and panels \raisebox{.5pt}{\textcircled{\raisebox{-.9pt}
    {6}}}-\raisebox{.5pt}{\textcircled{\raisebox{-.9pt}
    {8}}} of
Figure~\ref{fig_overview22} and Figures~\ref{fig.fast_dynamics} and \ref{split}. 
\begin{figure}
	\centering
	\includegraphics[width=\linewidth]{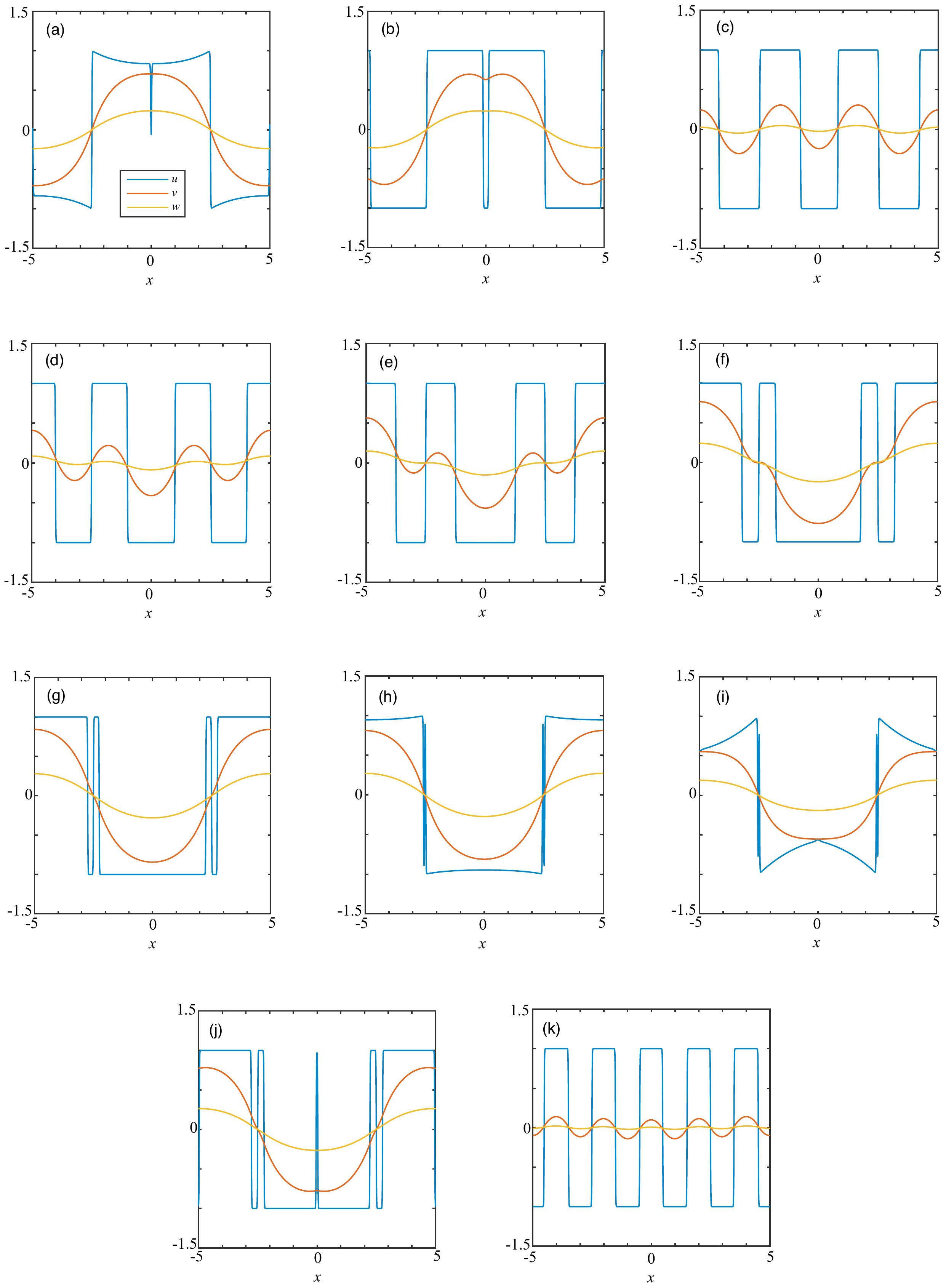}
	\caption{Figures showing the splitting of the periodic patterns for $B=0.01,C=0,\varepsilon=0.01,D=3$ and (a) $A=0.0351698$ (b) $A=-0.00362148$ (c) $A=-0.00478936$ (d) $A=-0.00487184$ (e) $A=-0.00474795$ (f) $A=-0.00388322$ (g) $A=-0.00285898$ (h) $A=0.116642$ (i) $A=0.692597$ (j) $A=-0.00287210$ (k) $A=-0.00372264$}
	\label{split}
\end{figure}
When $C\neq0$ (Figure~\ref{fig_overview22}), there is one new fast transition in the center of the domain and when $C=0$ (Figure~\ref{split}), there are two new fast transitions: one in the center of the domain and one on the boundaries. The two fast transitions are related by the additional symmetry $(U,V,W) \to -(U,V,W)$ of the system with $C=0$.
The new periodic solution with one extra fast transition can be seen as concatenations of a $2L$-periodic solution with two 
 fast transitions and width $L$ from Theorem~\ref{th.two_2} and an $L$-periodic solution with one 
 fast transition from Theorem~\ref{th.one}. As such, it is expected that these new type of solutions can also be analysed with the techniques from this paper. The other new type of solutions can be described and analysed  in a similar fashion.
 However, we decided to not pursue this direction in the current paper.

If one keeps on decreasing $A$ back to order $\varepsilon$ each new homoclinic fast transition again transforms into two heteroclinic fast transitions and we thus observe the formation of a $2L$-periodic solution with four or six fast transitions, see Figure~\ref{split}. 
These new periodic solutions can, in principle, again be explicitly studied using the earlier techniques of this paper. Upon further continuing $A$ this process of adding fast transitions and adjusting interface locations continues and this process is reminiscent of homoclinic snaking \cite{avitabile2010snake, beck2009,Burke}. It would be interesting to further research this potential connection.

\subsection{Future Research}

We have shown that for a given set of parameters system
\eqref{e:2d_system} supports a multiple of stationary periodic
solutions with different characteristics. When all parameters are
small, we have also determined their co-periodic stability/instability
using the action functional approach from~\cite{H18}. A natural next step
would be to study the stability of the other periodic patterns to see
which of these are {\emph{observable}}, e.g., by trying to extend the
action functional method or by using  Evans function techniques
from for instance \cite{Harmen}. These stability results, combined with the results from this paper, would form the starting point for analysing and understanding the dynamic properties of non-stationary periodic patterns. That is, how do initial conditions (with certain properties) evolve towards the stable stationary periodic solutions? For localized states and with all parameters small this was done in \cite{vHDKP10}.

This paper can also be seen as the foundation for further work on
the analysis of planar grain boundaries -- where two differently
orientated spatially periodic patterns meet on the plane -- that
requires a sound knowledge of the existence and (transverse) stability
properties of periodic one-dimensional patterns; see for instance
\cite{Haragus2007,Haragus2012,Scheel2014} in the context of the Turing pattern forming systems. 

\section*{Acknowledgments}
The authors thank K. Harley and A. Doelman for fruitful discussions.
PvH and GD acknowledge that a crucial part of this paper was established during the first joint Australia-Japan workshop on dynamical systems with applications in life sciences. GD thanks Queensland University of Technology for their hospitality.

\appendix

\section{Slow approximation for all parameters small}\label{s:slow_approx_params_small}
The approximation of the slow solutions for all parameters small in Theorem~\ref{th.two} and section~\ref{S:SFS_Asmall}.

\begin{itemize}
\item For $x\in I_1$ \eqref{intervals}: $u_s(x)=1+ o(1)$, 
  $p_s(x)=0+ o(1)$,
\begin{equation}\label{e:I1}
\begin{array}{rcl rcl}
v_s (x)&=& 1 -\dfrac{2\sinh(x^{**})}{\sinh(L)}\,\cosh(L+x)+
             o(1),
\\[2mm] q_s (x)&=& -\dfrac{2\sinh(x^{**})}{\sinh(L)}\,\sinh(L+x)+
              o(1),
  \\[2mm] 
w_s (x)&=& 1 - \dfrac{2\sinh\left(x^{**}/D\right)}{\sinh\left(
             L/D\right)}\,
             \cosh\left((L+x)/D\right) + o(1),
\\[2mm] r_s (x)&=& -\dfrac{2\sinh\left(x^{**}/D\right)}{D\sinh\left(
             L/D\right)}\,\sinh\left((L+x)/D)\right)+ o(1).
\end{array}
\end{equation}
\item For $x\in I_3$ \eqref{intervals}:  $u_s(x)=-1+
          o(1)$, 
          $p_s(x)=0+ o(1)$, 
\begin{equation}\label{e:I3}
\begin{array}{rcl rcl}
v_s (x)&=& -1 +\dfrac{2\sinh(L-x^{**})}{\sinh(L)}\,\cosh(x)+
          o(1),
\\[2mm]q_s (x)&=& \dfrac{2\sinh(L-x^{**})}{\sinh(L)}\sinh(x)
          + o(1),
  \\[2mm]
w_s (x)&=& -1 +\dfrac{2\sinh\left((L-x^{**})/D\right)}
{\sinh\left(L/D\right)}\,\cosh\left( x/D\right)+ o(1),
\\[2mm]r_s (x)&=&
\dfrac{2\sinh\left((L-x^{**})/D\right)}{D\sinh\left(L/D\right)}
\sinh\left( x/D\right)+ o(1).
\end{array}
\end{equation}

\item For $x\in I_5$ \eqref{intervals}:  $u_s(x)=+1+ o(1)$, 
  $p_s(x)=0+ o(1)$, 
\begin{equation}\label{e:I2}
\begin{array}{rcl rcl}
v_s (x)&=& 1 -\dfrac{2\sinh(x^{**})}{\sinh(L)}\,\cosh(L-x)+
             o(1),
             \\[2mm]
q_s (x)&=& \dfrac{2\sinh(x^{**})}{\sinh(L)}\,\sinh(L-x)+
              o(1),
  \\[2mm] 
w_s (x)&=& 1 -\dfrac{2\sinh\left(x^{**}/D\right)}{\sinh\left(\
             L/D\right)}\,
             \cosh\left((L-x)/D\right)+ o(1),
\\[2mm]r_s (x)&=& \dfrac{2\sinh\left(x^{**}/D\right)}{D\sinh\left(
             L/D\right)}\,\sinh\left((L-x)/D\right)+ o(1).
\end{array}
\end{equation}
	\end{itemize}

\section{Higher order correction terms of the slow-fast periodic solutions}\label{A:HOT}
We compute the next order correction terms of the slow-fast periodic solutions of Theorem~\ref{th.one}, see also section~\ref{S:SF},  
to see how the period comes into play.
To obtain the next order approximation in the slow dynamics,
we first determine the correction to the slow manifold. We write
$\hu^\pm(v;\varepsilon) = \hu^\pm_0(v)+\varepsilon \hu^\pm_1+\mathcal{O}(\varepsilon^2) $
and $\widehat{p} = 0+\varepsilon
\widehat{p}^\pm_1+\mathcal{O}(\varepsilon^2)$, where $\hu_1$ and
$\widehat{p}_1$ are functions of $(v,q,w,r)$.
Substitution into the slow system~\eqref{e:6slow} and
truncating at second order in $\varepsilon$ gives
\begin{eqnarray*}
  \widehat{p}_1 (v,q,w,r)&=& 
                   \frac{d \hu_0(v)}{d v} q = \frac{Aq}{1-3\hu_0^2(v)},\\   
  \hu_1 (v,q,w,r)&=& \frac{B_1w+C_1}{1-3\hu_0^2}.           
\end{eqnarray*}
These expressions are well-defined on $\mathcal{M}_0^\pm$ away from
the singular points at $\hu_0=\pm 1/\sqrt 3$ (i.e., away from
$A=2/3$). To find the slow dynamics, we write
$v_s=\pm \sqrt{1-A}+\varepsilon v^s_1(x) +\mathcal{O}(\varepsilon^2)$,
$w_s=\pm \sqrt{1-A}+\varepsilon w^s_1(x) +\mathcal{O}(\varepsilon^2)$,
$q_s=\varepsilon q^s_1(x) +\mathcal{O}(\varepsilon^2)$,
$r_s=\varepsilon r^s_1(x) +\mathcal{O}(\varepsilon^2)$. This gives a
linear constant coefficient system of ODEs
\begin{eqnarray*}
(v_1^s)_x &=& q_1^s\,,\\
(q_1^s)_x &=& \frac{2(1-A)}{2-3A}\,v_1^s
               +\frac{C_1\pm B_1\sqrt{1-A^2}}{2-3A}\,,\\ 
(w_1^s)_x &=& \dfrac{r_1^s}{D}\,,\\
(r_1^s)_x &=& \frac1D\,\left(w_1^s+\frac{A}{2-3A}\,v_1^s +
               \frac{C_1\pm B_1\sqrt{1-A^2}}{2-3A}\right),  
\end{eqnarray*}
where we used
$\hu_0(\pm\sqrt{1-A}+\varepsilon v_1^s) = \pm\sqrt{1-A}
+\varepsilon \,Av_1^s/(3A-2)+\mathcal{O}(\varepsilon^2) $
and
$\hu_1(\pm\sqrt{1-A}+\varepsilon v_1^s, \\ \pm\sqrt{1-A}+\varepsilon w_1^s) =
(\pm B_1\sqrt{1-A^2}+C_1)/(3A-2)+\mathcal{O}(\varepsilon) $.
The boundary conditions follow from the behaviour of the slow
variables during the fast phase.
Using that Theorem~\ref{th.one} gives
  $v_f(\xi)=\pm\sqrt{1-A}+\mathcal{O}(\varepsilon)$, the calculation
  in
\eqref{eq:delta_fq} gives
that the change in $q$ over the fast interval is given by
\[
\Delta_q^f(\varepsilon) = \varepsilon \int_{-\frac{1}{\sqrt{\varepsilon}}}^{\frac{1}{\sqrt{\varepsilon}}}
\left(\pm\sqrt{1-A}-u_h(\xi;\pm\sqrt{1-A}\right)\,d\xi +
o(\varepsilon),
\]
where $u_h(\xi;\hu_0^\pm)$ is the orbit in the fast system homoclinic
to $\hu_0^\pm=\pm\sqrt{1-A}$. This expression gives the jump in $q_1^s$ during the
fast phase and hence the boundary condition
\[
q^s_1(0^+)-q^s_1(0^-) = \int_{-\infty}^\infty
\left(\pm\sqrt{1-A}-u_h(\xi;\pm\sqrt{1-A}\right)\,d\xi =: J_1. 
\]
In a similar way, the jump in $r_1^s$ can be determined:
\[
r^s_1(0^+)-r^s_1(0^-) =\frac{1}{D} \int_{-\infty}^\infty
\left(\pm\sqrt{1-A}-u_h(\xi;\pm\sqrt{1-A}\right)\,d\xi =\frac{J_1}{D}. 
\]
As $q^s_0=0$ and $r^s_0=0$ during the fast phase, it follows that $v_1^s$
and $w_1^s$ do not have a jump during the fast phase.
Solving the system of ODEs with those boundary conditions, we get that
for $x\in I_s$:
\[
v^s_1(x) = -\frac{C_1+\ueq^0B_1}{2(1-A)}+
\frac{J_1}{2M}\,\frac{\cosh\left(M  (x\pm L)\right)}{\sinh(ML)}; \quad 
q^s_1(x) = \frac{J_1}{2}\,\frac{\sinh\left(M  (x\pm L)\right)}{\sinh(ML)};
\]
\begin{align*}
w^s_1(x)=&  -\frac{C_1+\ueq^0B_1}{2(1-A)} + 
\frac {J_1N\cosh \left( M(x\pm L) \right) }{2M \left( {D}^{2}{M}^{2}-1
 \right) \sinh \left( ML \right) } \\ &
 \qquad \qquad
+\frac {J_1 \left( \left( {M}^{2}+N \right) {D}^{2} -1\right) }
{2D(1-{D}^{2}{M}^{2}) }
\frac{\cosh \left( (x\pm L)/D \right)}
{  \left( \sinh \left( L/D \right)  \right)}
;
\end{align*}
and 
\[
r^s_1(x) =\frac{J_1}{2D(D^2M^2-1)}\,
\left(\frac {\sinh \left( M(x\pm L) \right) N{D}^{2} }{ \sinh \left( ML \right) }
-\frac{\left(\left( {M}^{2}+N \right) {D}^{2} -1\right)  \sinh \left( (x\pm L)/D \right)}{\sinh \left( L/D \right) }
 \right)
\]
with $M=\sqrt{(2(1-A))/(2-3A)}$, $N=A/(2-3A)$ and
$\ueq^0=\pm\sqrt{1-A^2}$ (the sign in this  expression is not related to
the sign in $I_s$, 
but it is the sign of the base point $\ueq^0$). 
These calculations break down for $A$ near $2/3$ as $M$ and $N$
start diverging. They also break down for $A$ near~0 as the integral $J_1$
will diverge due to the homoclinic undergoing a heteroclinic
bifurcation. These above expressions determine the relation between the profiles and the periodicity of the $2L$-periodic slow-fast solutions of Theorem~\ref{th.one}. 

\section{Proof of Lemma~\ref{lem.melnikov_overview}}\label{S:app_mel}
Before we prove Lemma~\ref{lem.melnikov_overview}, we
analyse the Melnikov function 
$M(z)
$ \eqref{e:jump}.
\begin{lemma}\label{lem.melnikov}
  Define the function $M:[-L,L]\to \mathbb{R}$ as
  $$M(z)  := A_1\,\dfrac{\sinh(z)}{\sinh(L)} +
B_1\,\dfrac{\sinh(z/D)}{\sinh(L/D)}.$$ This function has the following
properties.
\begin{enumerate}
\item The function $M(z)$ is odd and $M(L)=-M(-L)=A_1+B_1$. 
\item Define $\widetilde D = \dfrac{D\sinh(L/D)}{\sinh(L)}$ and
  $\widehat D =\dfrac{D\tanh(L/D)}{\tanh(L)}$, then
  $0<\widetilde D<1$ and $\widehat D>1$. 
 \begin{itemize} \item If
  $ \min(-\widetilde D A_1, -\widehat D
  A_1)\leq B_1\leq\max(-\widetilde D A_1, -\widehat D A_1)$,
  then $M(z)$ is non-monotonic with two turning points in
  $[-L,L]$. If $B_1=-A_1 \widetilde D$, then the turning points coincide
  at $z=0$ and if $B_1=-A_1 \widehat D$, then the turning point
  is at $z= \pm L$. \item Otherwise $M(z)$ is monotonic for $z\in[-L,L]$.
  \end{itemize}
\item 
In the $A_1$-$B_1$ plane, on the curves parametrised
  by~$z\in [-L,0)\cup(0,L] $ as 
\begin{eqnarray*}
A_1^*(z) =C_1\,\frac{\sinh(L)}{\cosh(z)(D\tanh(z/D)-\tanh(z))}, \\
B_1^*(z) =  -C_1\frac{D\sinh(L/D)}{\cosh(z/D)(D\tanh(z/D)-\tanh(z))}, 
\end{eqnarray*}
we have $M(z)|_{(A_1,B_1)=(A_1^*(z),B_1^*(z))}=-C_1$ and
$M'(z)|_{(A_1,B_1)=(A_1^*(z),B_1^*(z))}=0$, hence the Melnikov condition~\eqref{e:jump} has a double root $x^{**}$.
Furthermore, $\dfrac{B_1^*(z)}{A_1^*(z)} = 
-\dfrac{\widetilde D\cosh(z)}{\cosh(z/D)}$, an even function, monotonically decreasing
function for $z\in[0,L]$, and 
$A_1^*(z)=C_1\dfrac{3D^2\sinh(L)}{z^3(D^2-1)}$\\$\left(1+\mathcal{O}(z^2)
\right)$, $B_1^*(z) = -\widetilde{D}A_1^*(z)\left(1+\mathcal{O}(z^2)
\right)$, for $z\to 0$, and
$A_1^*(\pm L)=\pm\dfrac{C_1}{\widehat D -1}$, $B_1^*(\pm L)=
\mp\dfrac{C_1\widehat D}{\widehat D -1} = \mp C_1 -A_1^*(\pm L)$.
\end{enumerate}
\end{lemma}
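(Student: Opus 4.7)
The plan is to prove the three items of Lemma~\ref{lem.melnikov} in turn. Item 1 is immediate: oddness of $M$ follows from oddness of $\sinh$, and $M(\pm L) = \pm(A_1 + B_1)$ by direct substitution. For the monotonicity analysis in item 2, I would differentiate to obtain
\[
M'(z) = \frac{A_1\cosh(z)}{\sinh(L)} + \frac{B_1 \cosh(z/D)}{D\sinh(L/D)},
\]
which is even, and recast $M'(z) = 0$ as
\[
g(z) := \frac{\cosh(z)}{\cosh(z/D)} = -\frac{B_1 \sinh(L)}{A_1 D\sinh(L/D)} = -\frac{B_1}{A_1\widetilde D}.
\]
The heart of the argument is that $g$ is strictly increasing on $[0,L]$ for $D>1$; a short calculation gives $[\sinh(z)\cosh(z/D) - (1/D)\cosh(z)\sinh(z/D)]' = \cosh(z)\cosh(z/D)(1 - 1/D^2) > 0$, so the numerator of $g'$ vanishes only at $0$ and is positive afterwards. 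This yields either $0$ or one symmetric pair of solutions of $M'(z) = 0$ in $[-L,L]$, and the threshold values $g(0) = 1$ and $g(L) = \widehat D/\widetilde D$ translate via the displayed relation into $B_1 = -A_1\widetilde D$ (turning points colliding at the origin) and $B_1 = -A_1\widehat D$ (turning points reaching $\pm L$). The inequalities $0 < \widetilde D < 1 < \widehat D$ follow from convexity of $\sinh$ on $[0,\infty)$ (which gives $D\sinh(L/D) < \sinh(L)$) and from concavity of $\tanh$ (which gives $D\tanh(L/D) > \tanh(L)$), respectively.

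For item 3, the parametric curves are obtained by solving the linear system $\{M(z) = -C_1,\ M'(z) = 0\}$ in the unknowns $(A_1,B_1)$. Eliminating $B_1$ from $M'(z) = 0$ and substituting into $M(z) = -C_1$ reduces to
\[
\frac{A_1 \cosh(z)}{\sinh(L)}\bigl[\tanh(z) - D\tanh(z/D)\bigr] = -C_1,
\]
which immediately yields the stated formula for $A_1^*(z)$, and back-substitution gives $B_1^*(z)$. The denominator $D\tanh(z/D) - \tanh(z)$ is positive for $z>0$ by an analogous ``differentiate and compare'' argument: its derivative $\sech^2(z/D) - \sech^2(z)$ is positive since $\sech^2$ is decreasing in $|z|$ and $|z/D| < |z|$ when $D>1$. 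Taylor expanding around $z=0$ one finds $D\tanh(z/D) - \tanh(z) = (D^2-1)z^3/(3D^2) + \mathcal{O}(z^5)$ (the linear terms cancel exactly), and substitution delivers the small-$z$ asymptotics of $A_1^*(z)$ and $B_1^*(z)$. The identity $B_1^*(z)/A_1^*(z) = -\widetilde D\,g(z)$ is a one-line algebraic manipulation, and monotonic decrease of $B_1^*/A_1^*$ on $[0,L]$ is then immediate from monotonicity of $g$. The boundary evaluations $A_1^*(\pm L) = \pm C_1/(\widehat D - 1)$ and $B_1^*(\pm L) = \mp C_1\widehat D/(\widehat D - 1)$ follow by elementary simplification using $\widehat D = D\tanh(L/D)/\tanh(L)$, and the relation $B_1^*(\pm L) = \mp C_1 - A_1^*(\pm L)$ is just the Melnikov constraint $M(\pm L) = \pm(A_1+B_1) = -C_1$ evaluated at the boundary, which holds automatically because $(A_1^*, B_1^*)$ satisfies $M(z) = -C_1$ by construction.

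The main obstacle will be establishing the strict monotonicity of $g(z) = \cosh(z)/\cosh(z/D)$ on $[0,L]$, since this single fact underpins both the turning-point count in item 2 and the monotonic behaviour of $B_1^*/A_1^*$ in item 3. Fortunately, as sketched above, the numerator of $g'(z)$ has derivative $\cosh(z)\cosh(z/D)(1 - 1/D^2)$, which is manifestly positive for $D>1$, so a one-line boot\-strap from $N(0)=0$ to $N(z)>0$ suffices. Once this monotonicity and the companion positivity of $D\tanh(z/D) - \tanh(z)$ are in hand, the remaining steps (Taylor expansions, boundary evaluations, algebraic rearrangements) are routine, and the proof assembles cleanly.
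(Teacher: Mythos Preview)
Your proof is correct and follows essentially the same approach as the paper: both reduce the turning-point analysis to the monotonicity of the ratio $\cosh(z)/\cosh(z/D)$ (the paper works with its reciprocal), establish $\widetilde D<1<\widehat D$ by comparing derivatives, and obtain item~3 by solving the linear system $M(z)=-C_1$, $M'(z)=0$. If anything you give more detail than the paper, which simply states that the formulae for $A_1^*(z)$ and $B_1^*(z)$ ``can be verified by substitution''; your bootstrap argument for the sign of the numerator of $g'$ and your Taylor expansion of $D\tanh(z/D)-\tanh(z)$ are cleaner than the paper's somewhat terse treatment.
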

\begin{proof}[Proof of Lemma~\ref{lem.melnikov}]
  The first observation follows by inspection. To show the second
  observations, we first define the functions $f(L) = D\sinh(L/D)-\sinh(L)$ and
  $g(L) = D\tanh(L/D)-\tanh(L)$. Differentiation shows $f'(L)<0$ and
  $g'(L)>0$ for $L>0$. Since $f(0)=0=g(0)$, this implies that 
  $\widetilde D<1$ and $\widehat D>1$.
  
Next, we differentiate $M$ and find
  \[
M'(z) =  A_1\,\frac{\cosh(z)}{\sinh(L)} +
B_1\,\frac{\cosh(z/D)}{D\sinh(L/D)} =
\frac{\cosh(z)}{D\sinh(L/D)}\,
\left[A_1\widetilde D +B_1 \,\frac{\cosh(z/D)}{\cosh(z)}
\right].
\]
Thus $M'(z)=0$ if and only if $-\dfrac{A_1\widetilde D}{B_1} = \dfrac{\cosh(z/D)}{\cosh(z)}$. 
Since $D>1$, the function $\dfrac{\cosh(z/D)}{\cosh(z)}$ is even and
monotonically decreasing for $z>0$ as

\begin{align*}
  \frac{d}{dz}\left(\frac{\cosh(z/D)}{\cosh(z)}\right) &=
\dfrac{\cosh(z)\sinh(z/D)-D \cosh(z/D)\sinh(z)}{D\cosh^2(z)} 
\\&= \dfrac{\tanh(z/D)-D \tanh(z)}{D\cosh(z)\cosh(z/D)}
<0.
\end{align*}
A quick calculation shows that
$\dfrac{\cosh(L/D)}{\cosh(L)}=\dfrac{\widetilde D}{\widehat D}<1$. Hence,
$\dfrac{\cosh(L/D)}{\cosh(L)} \in \left[\dfrac{\widetilde D}{\widehat D},1\right]$. 
Thus 
$M'(z)$ has exactly one zero in $[0,L]$ if
$-\dfrac{A_1\widetilde D}{B_1} \in \left[\dfrac{\widetilde
    D}{\widehat D},1\right]$ and no zeros in $[0,L]$
otherwise. Rewriting this relation between $A_1$ and $B_1$ gives
the condition in the Lemma.

The statements about the functions $A_1^*(z)$ and $B_1^*(z)$ can
be verified by substitution in the expressions for $M(z)$  and $M'(z)$. 
\end{proof}

Now we are ready to prove Lemma~\ref{lem.melnikov_overview}.
\begin{proof}[Proof of Lemma~\ref{lem.melnikov_overview}]
  Again we distinguish between $C_1=0$ and $C_1\neq 0$.
  \begin{itemize}
  \item Assume $C_1=0$. The Melnikov condition~\eqref{e:jump}
    becomes $M(2x^{**}-L) =0$ for some $x^{**}\in(0,L)$. Point (1) in
    the lemma above shows that this equation is always satisfied at
    $2x^{**}-L=0$, i.e., $x^{**}=L/2$. Combining points (1) and (2)
    gives the remaining statements.
    \item Assume $C_1\neq0$. The saddle-node curve is derived in
      point (3) from the lemma above and the other curves follow from
      points (1) and (2). 
    \end{itemize}\vspace*{-8mm}
  \end{proof}

 The bifurcation diagrams along the dashed curves in the left panel of
 Figure~\ref{f:exist_ab_plane} are shown in
 Figure~\ref{f:bifurcation_a}.
 \begin{figure}[htb]
   \begin{center}
   \includegraphics[width=1\linewidth]{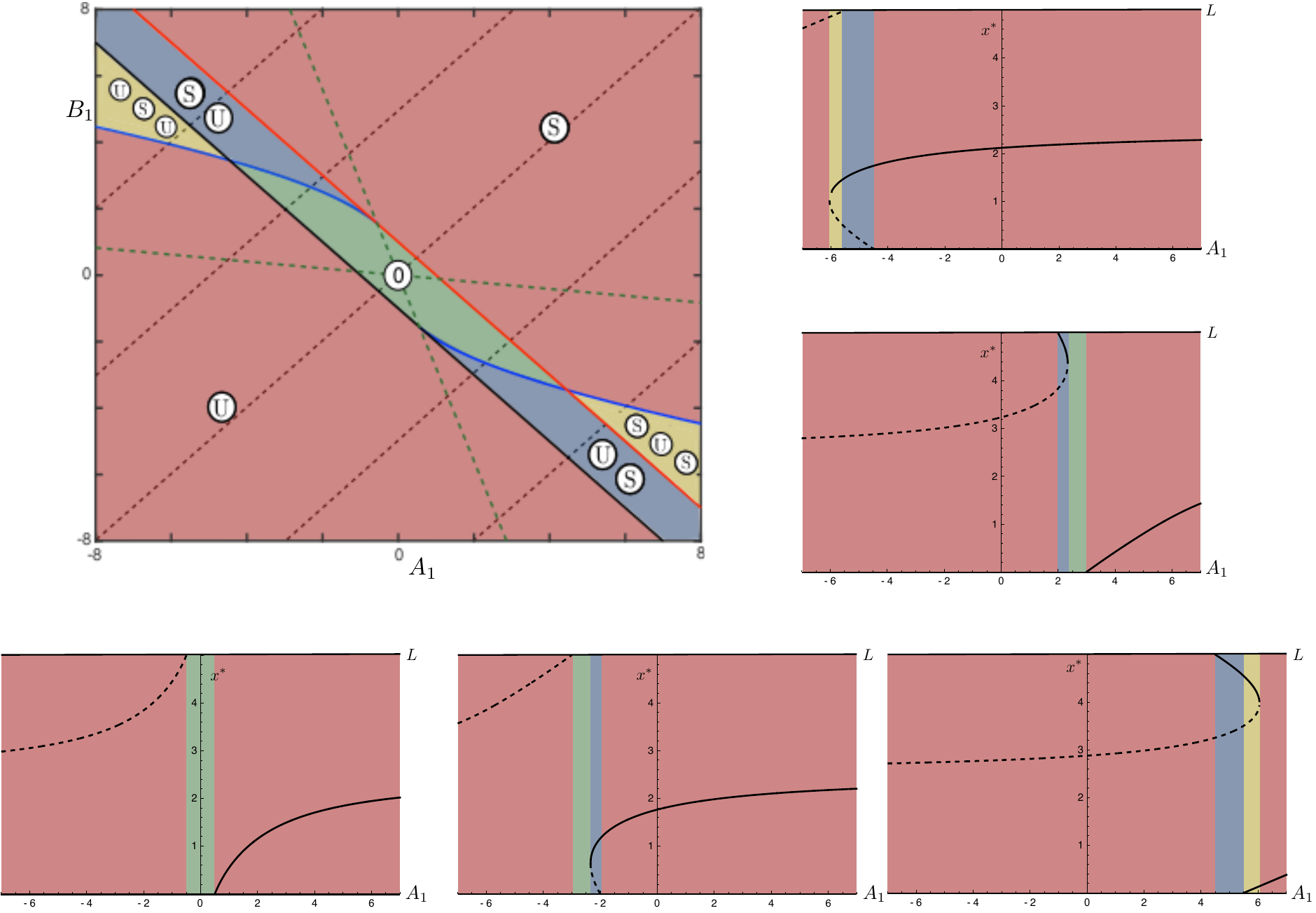}
    \end{center}
 \caption{The bifurcation diagrams for the solutions depicted in the
   right panel of Figure~\ref{f:exist_ab_plane} along the dashed black
   lines. First the right panel of Figure~\ref{f:exist_ab_plane} is
   repeated. Then going from left to right, the relation between
   $A_1$ and $B_1$ in the bifurcation diagrams is:
   $A_1 - B_1 = -10$; $A_1 - B_1 = -5$;
   $A_1 - B_1 = 0$; $A_1 - B_1 = 5$; $A_1 - B_1 =
   10$. In the bifurcation diagrams, the solid black lines correspond
   to stable solutions, the dashed black lines to unstable
   solutions.}\label{f:bifurcation_a}
 \end{figure}

\bibliographystyle{siamplain}
\bibliography{RD3}

\end{document}